\let\mathcaltmp\mathcal
\let\mathcal\mathscr
\let\mathscr\mathcaltmp
\newcommand{\eqnum}{\refstepcounter{equation}\textup{\tagform@{\theequation}}}
\makeatletter \@addtoreset{equation}{section} \makeatother
\renewcommand{\theequation}{\thesection.\arabic{equation}}
\newtheorem{thm}[equation]{Theorem}
\newtheorem{thmX}{Theorem}
\newtheorem{corX}[thmX]{Corollary}
\newtheorem*{thm*}{Theorem}
\newtheorem{lem}[equation]{Lemma}
\newtheorem{cor}[equation]{Corollary}
\newtheorem{prop}[equation]{Proposition}
\newtheorem*{defthm*}{Definition/Theorem}
\theoremstyle{definition}
\newtheorem{defn}[equation]{Definition}
\newtheorem{rem}[equation]{Remark}
\newtheorem{exam}[equation]{Example}
\newtheorem{constr}[equation]{Construction}
\newtheorem{varnt}[equation]{Variant}
\newtheorem{notat}[equation]{Notation}
\newtheorem*{exam*}{Example}
\newcommand\arXiv[1]{\href{http://arxiv.org/abs/#1}{arXiv:#1}}
\newcommand{\changelocaltocdepth}[1]{%
  \addtocontents{toc}{\protect\setcounter{tocdepth}{#1}}%
  \setcounter{tocdepth}{#1}}
\newcommand{\nc}{\newcommand}
\nc{\renc}{\renewcommand}
\nc{\ssec}{\subsection}
\nc{\sssec}{\subsubsection}
\nc{\on}{\operatorname}
\nc{\term}[1]{#1\xspace}
\tikzset{
  commutative diagrams/.cd,
  arrow style=tikz,
  diagrams={>=latex}}
\tikzset{
  column sep/.code=\def\pgfmatrixcolumnsep{\pgf@matrix@xscale*(#1)},
  row sep/.code   =\def\pgfmatrixrowsep{\pgf@matrix@yscale*(#1)},
  matrix xscale/.code=%
    \pgfmathsetmacro\pgf@matrix@xscale{\pgf@matrix@xscale*(#1)},
  matrix yscale/.code=%
    \pgfmathsetmacro\pgf@matrix@yscale{\pgf@matrix@yscale*(#1)},
  matrix scale/.style={/tikz/matrix xscale={#1},/tikz/matrix yscale={#1}}}
\def\pgf@matrix@xscale{1}
\def\pgf@matrix@yscale{1}
\setlist[enumerate,1]{label={(\alph*)},itemsep=\parskip}
\newlist{thmlist}{enumerate}{1}
\setlist[thmlist,1]{
  label={\em(\roman*)}, ref={(\roman*)},
  itemsep=0.5em,
  align=right,widest=vi)}
\newlist{thmlistbis}{enumerate}{1}
\setlist[thmlistbis,1]{
  label={\em(\roman*~\textit{bis})},
  ref={(\roman*}~\textit{bis}\upshape{)},
  itemsep=0.5em,
  leftmargin=0pt, align=right, widest=vi)}
\newlist{defnlist}{enumerate}{2}
\setlist[defnlist,1]{
  label={(\roman*)}, ref={(\roman*)},
  itemsep=0.5em,
  topsep=0em,
  leftmargin=*,
  align=left, widest=vi)}
\setlist[defnlist,2]{
  label={(\alph*)}, ref={(\alph*)},
  itemsep=0.75em,
  labelsep=0em,labelindent=0em,leftmargin=*,align=left,widest=vi),
  topsep=0.75em}
\newlist{inlinelist}{enumerate*}{1}
\setlist[inlinelist,1]{label={(\alph*)}}
\newlist{inlinedefnlist}{enumerate*}{1}
\definecolor{green}{HTML}{38550C}
\setlist[inlinedefnlist,1]{label={\color{green}(\roman*)}}
\nc{\cA}{\ensuremath{\mathcal{A}}\xspace}
\nc{\cB}{\ensuremath{\mathcal{B}}\xspace}
\nc{\cC}{\ensuremath{\mathcal{C}}\xspace}
\nc{\cD}{\ensuremath{\mathcal{D}}\xspace}
\nc{\cE}{\ensuremath{\mathcal{E}}\xspace}
\nc{\cF}{\ensuremath{\mathcal{F}}\xspace}
\nc{\cG}{\ensuremath{\mathcal{G}}\xspace}
\nc{\cH}{\ensuremath{\mathcal{H}}\xspace}
\nc{\cI}{\ensuremath{\mathcal{I}}\xspace}
\nc{\cJ}{\ensuremath{\mathcal{J}}\xspace}
\nc{\cK}{\ensuremath{\mathcal{K}}\xspace}
\nc{\cL}{\ensuremath{\mathcal{L}}\xspace}
\nc{\cM}{\ensuremath{\mathcal{M}}\xspace}
\nc{\cN}{\ensuremath{\mathcal{N}}\xspace}
\nc{\cO}{\ensuremath{\mathcal{O}}\xspace}
\nc{\cP}{\ensuremath{\mathcal{P}}\xspace}
\nc{\cQ}{\ensuremath{\mathcal{Q}}\xspace}
\nc{\cR}{\ensuremath{\mathcal{R}}\xspace}
\nc{\cS}{\ensuremath{\mathcal{S}}\xspace}
\nc{\cT}{\ensuremath{\mathcal{T}}\xspace}
\nc{\cU}{\ensuremath{\mathcal{U}}\xspace}
\nc{\cV}{\ensuremath{\mathcal{V}}\xspace}
\nc{\cW}{\ensuremath{\mathcal{W}}\xspace}
\nc{\cX}{\ensuremath{\mathcal{X}}\xspace}
\nc{\cY}{\ensuremath{\mathcal{Y}}\xspace}
\nc{\cZ}{\ensuremath{\mathcal{Z}}\xspace}
\nc{\sA}{\ensuremath{\mathscr{A}}\xspace}
\nc{\sB}{\ensuremath{\mathscr{B}}\xspace}
\nc{\sC}{\ensuremath{\mathscr{C}}\xspace}
\nc{\sD}{\ensuremath{\mathscr{D}}\xspace}
\nc{\sE}{\ensuremath{\mathscr{E}}\xspace}
\nc{\sF}{\ensuremath{\mathscr{F}}\xspace}
\nc{\sG}{\ensuremath{\mathscr{G}}\xspace}
\nc{\sH}{\ensuremath{\mathscr{H}}\xspace}
\nc{\sI}{\ensuremath{\mathscr{I}}\xspace}
\nc{\sJ}{\ensuremath{\mathscr{J}}\xspace}
\nc{\sK}{\ensuremath{\mathscr{K}}\xspace}
\nc{\sL}{\ensuremath{\mathscr{L}}\xspace}
\nc{\sM}{\ensuremath{\mathscr{M}}\xspace}
\nc{\sN}{\ensuremath{\mathscr{N}}\xspace}
\nc{\sO}{\ensuremath{\mathscr{O}}\xspace}
\nc{\sP}{\ensuremath{\mathscr{P}}\xspace}
\nc{\sQ}{\ensuremath{\mathscr{Q}}\xspace}
\nc{\sR}{\ensuremath{\mathscr{R}}\xspace}
\nc{\sS}{\ensuremath{\mathscr{S}}\xspace}
\nc{\sT}{\ensuremath{\mathscr{T}}\xspace}
\nc{\sU}{\ensuremath{\mathscr{U}}\xspace}
\nc{\sV}{\ensuremath{\mathscr{V}}\xspace}
\nc{\sW}{\ensuremath{\mathscr{W}}\xspace}
\nc{\sX}{\ensuremath{\mathscr{X}}\xspace}
\nc{\sY}{\ensuremath{\mathscr{Y}}\xspace}
\nc{\sZ}{\ensuremath{\mathscr{Z}}\xspace}
\nc{\bA}{\ensuremath{\mathbf{A}}\xspace}
\nc{\bB}{\ensuremath{\mathbf{B}}\xspace}
\nc{\bC}{\ensuremath{\mathbf{C}}\xspace}
\nc{\bD}{\ensuremath{\mathbf{D}}\xspace}
\nc{\bE}{\ensuremath{\mathbf{E}}\xspace}
\nc{\bF}{\ensuremath{\mathbf{F}}\xspace}
\nc{\bG}{\ensuremath{\mathbf{G}}\xspace}
\nc{\bH}{\ensuremath{\mathbf{H}}\xspace}
\nc{\bI}{\ensuremath{\mathbf{I}}\xspace}
\nc{\bJ}{\ensuremath{\mathbf{J}}\xspace}
\nc{\bK}{\ensuremath{\mathbf{K}}\xspace}
\nc{\bL}{\ensuremath{\mathbf{L}}\xspace}
\nc{\bM}{\ensuremath{\mathbf{M}}\xspace}
\nc{\bN}{\ensuremath{\mathbf{N}}\xspace}
\nc{\bO}{\ensuremath{\mathbf{O}}\xspace}
\nc{\bP}{\ensuremath{\mathbf{P}}\xspace}
\nc{\bQ}{\ensuremath{\mathbf{Q}}\xspace}
\nc{\bR}{\ensuremath{\mathbf{R}}\xspace}
\nc{\bS}{\ensuremath{\mathbf{S}}\xspace}
\nc{\bT}{\ensuremath{\mathbf{T}}\xspace}
\nc{\bU}{\ensuremath{\mathbf{U}}\xspace}
\nc{\bV}{\ensuremath{\mathbf{V}}\xspace}
\nc{\bW}{\ensuremath{\mathbf{W}}\xspace}
\nc{\bX}{\ensuremath{\mathbf{X}}\xspace}
\nc{\bY}{\ensuremath{\mathbf{Y}}\xspace}
\nc{\bZ}{\ensuremath{\mathbf{Z}}\xspace}
\nc{\bbA}{\ensuremath{\mathbb{A}}\xspace}
\nc{\bbB}{\ensuremath{\mathbb{B}}\xspace}
\nc{\bbC}{\ensuremath{\mathbb{C}}\xspace}
\nc{\bbD}{\ensuremath{\mathbb{D}}\xspace}
\nc{\bbE}{\ensuremath{\mathbb{E}}\xspace}
\nc{\bbF}{\ensuremath{\mathbb{F}}\xspace}
\nc{\bbG}{\ensuremath{\mathbb{G}}\xspace}
\nc{\bbH}{\ensuremath{\mathbb{H}}\xspace}
\nc{\bbI}{\ensuremath{\mathbb{I}}\xspace}
\nc{\bbJ}{\ensuremath{\mathbb{J}}\xspace}
\nc{\bbK}{\ensuremath{\mathbb{K}}\xspace}
\nc{\bbL}{\ensuremath{\mathbb{L}}\xspace}
\nc{\bbM}{\ensuremath{\mathbb{M}}\xspace}
\nc{\bbN}{\ensuremath{\mathbb{N}}\xspace}
\nc{\bbO}{\ensuremath{\mathbb{O}}\xspace}
\nc{\bbP}{\ensuremath{\mathbb{P}}\xspace}
\nc{\bbQ}{\ensuremath{\mathbb{Q}}\xspace}
\nc{\bbR}{\ensuremath{\mathbb{R}}\xspace}
\nc{\bbS}{\ensuremath{\mathbb{S}}\xspace}
\nc{\bbT}{\ensuremath{\mathbb{T}}\xspace}
\nc{\bbU}{\ensuremath{\mathbb{U}}\xspace}
\nc{\bbV}{\ensuremath{\mathbb{V}}\xspace}
\nc{\bbW}{\ensuremath{\mathbb{W}}\xspace}
\nc{\bbX}{\ensuremath{\mathbb{X}}\xspace}
\nc{\bbY}{\ensuremath{\mathbb{Y}}\xspace}
\nc{\bbZ}{\ensuremath{\mathbb{Z}}\xspace}
\nc{\mrm}[1]{\ensuremath{\mathrm{#1}}\xspace}
\nc{\mfr}[1]{\ensuremath{\mathfrak{#1}}\xspace}
\nc{\mit}[1]{\ensuremath{\mathit{#1}}\xspace}
\nc{\mbf}[1]{\ensuremath{\mathbf{#1}}\xspace}
\nc{\mcal}[1]{\ensuremath{\mathcal{#1}}\xspace}
\nc{\msc}[1]{\ensuremath{\mathscr{#1}}\xspace}
\nc{\sub}{\subseteq}
\nc{\too}{\longrightarrow}
\nc{\hook}{\hookrightarrow}
\nc{\hooklongrightarrow}{\lhook\joinrel\longrightarrow}
\nc{\hooklong}{\hooklongrightarrow}
\nc{\hooklongleftarrow}{\longleftarrow\joinrel\rhook}
\nc{\twoheadlongrightarrow}{\relbar\joinrel\twoheadrightarrow}
\nc{\longrightleftarrows}{\ \raisebox{0.3ex}{\(\mathrel{\substack{\xrightarrow{\rule{1em}{0em}} \\[-1ex] \xleftarrow{\rule{1em}{0em}}}}\)}\ }
\renc{\ge}{\geqslant}
\renc{\le}{\leqslant}
\nc{\id}{\mathrm{id}}
\DeclareMathOperator{\rk}{\mathrm{rk}}
\DeclareMathOperator{\Hom}{\on{Hom}}
\nc{\uHom}{\underline{\smash{\Hom}}}
\DeclareMathOperator{\Aut}{\on{Aut}}
\DeclareMathOperator{\End}{\on{End}}
\DeclareMathOperator{\Sym}{\on{Sym}}
\nc{\uEnd}{\underline{\smash{\End}}}
\nc{\colim}{\varinjlim}
\renc{\lim}{\varprojlim}
\nc{\Cofib}{\on{Cofib}}
\nc{\Fib}{\on{Fib}}
\nc{\initial}{\varnothing}
\nc{\op}{\mathrm{op}}
\DeclareMathOperator*{\fibprod}{\times}
\renc{\setminus}{\smallsetminus}
\DeclarePairedDelimiter\abs{\lvert}{\rvert}%
\newcommand{\thmref}[1]{Theorem~\ref{#1}}
\newcommand{\secref}[1]{Sect.~\ref{#1}}
\newcommand{\ssecref}[1]{Subsect. ~\ref{#1}}
\newcommand{\sssecref}[1]{(\ref{#1})}
\newcommand{\propref}[1]{Proposition~\ref{#1}}
\newcommand{\corref}[1]{Corollary~\ref{#1}}
\newcommand{\remref}[1]{Remark~\ref{#1}}
\newcommand{\defnref}[1]{Definition~\ref{#1}}
\renewcommand{\eqref}[1]{(\ref{#1})}
\newcommand{\constrref}[1]{Construction~\ref{#1}}
\newcommand{\examref}[1]{Example~\ref{#1}}
\newcommand{\notatref}[1]{Notation~\ref{#1}}
\newcommand{\itemref}[1]{\ref{#1}}
\nc{\A}{\bA}
\renc{\P}{\bP}
\nc{\Spec}{\on{Spec}}
\nc{\D}{\on{\mbf{D}}}
\nc{\SH}{\on{\mbf{SH}}}
\nc{\DM}{\on{\mbf{DM}}}
\nc{\MGLmod}{\on{\mbf{D}_\MGL}}
\nc{\MGLmodQ}{\on{\mbf{D}_{\MGL,\Q}}}
\nc{\cat}{\mrm{1}}
\nc{\AHR}{\mrm{AHR}}
\nc{\geom}{\mrm{T}}
\nc{\Dqc}{\on{\mbf{D}}_{\mrm{qc}}}
\nc{\bDelta}{\mathbf{\Delta}}
\nc{\Cech}{\textnormal{\v{C}}}
\nc{\Dperf}{\on{\mbf{D}}_{\mrm{perf}}}
\nc{\Coh}{\on{Coh}}
\nc{\Qcoh}{\on{Qcoh}}
\nc{\Dcoh}{\on{\mbf{D}}_{\mrm{coh}}}
\nc{\uCoh}{\underline{\smash{\Coh}}}
\nc{\Higgs}{\underline{\smash{\on{Higgs}}}}
\nc{\cl}{{\mrm{cl}}}
\nc{\Bl}{\on{Bl}}
\nc{\vir}{\mrm{vir}}
\nc{\CH}{\on{A}}
\nc{\et}{\mrm{\acute{e}t}}
\renc{\H}{\on{H}}
\nc{\BM}{\mrm{BM}}
\nc{\Z}{\bZ}
\nc{\Q}{\bQ}
\nc{\K}{{\on{K}}}
\nc{\KGL}{\mrm{KGL}}
\nc{\MGL}{\mrm{MGL}}
\nc{\KB}{\K^{\mrm{B}}}
\nc{\G}{{\on{G}}}
\nc{\KH}{\mrm{KH}}
\nc{\Ket}{\K^{\et}}
\nc{\KHet}{\KH^{\et}}
\nc{\Get}{\G^{\et}}
\nc{\Einfty}{{\sE_\infty}}
\renc{\sp}{\mrm{sp}}
\nc{\Td}{\on{Td}}
\nc{\ch}{\on{ch}}
\nc{\RGamma}{R\Gamma}
\nc{\red}{\mrm{red}}
\nc{\der}{{\mrm{der}}}
\nc{\Mod}{{\mrm{Mod}}}
\nc{\Gr}{{\on{Gr}}}
\nc{\Ind}{\on{Ind}}
\nc{\form}{\widehat}
\nc{\R}{\bR}
\renc{\L}{\bL}
\nc{\otimesL}{\mathchoice{\overset{\bL}{\otimes}}{\otimes^\bL}{\otimes^\bL}{\otimes^\bL}}
\nc{\fibprodR}{\fibprod^R}
\nc{\uRHom}{\bR\uHom}
\nc{\GL}{\mrm{GL}}
\nc{\SL}{\mrm{SL}}
\nc{\SW}{\on{SW}}
\nc{\Vect}{\on{Vect}}
\nc{\Fun}{\on{Fun}}
\nc{\vb}[1]{\langle #1\rangle}
\nc{\loc}{\mrm{loc}}
\nc{\fix}{\mrm{fix}}
\nc{\mov}{\mrm{mov}}
\nc{\cms}{\mrm{cms}}
\nc{\V}{\bV}
\nc{\Gm}{{\bG_m}}
\nc{\pt}{\mrm{pt}}
\nc{\mot}{\mrm{mot}}
\nc{\an}{\mrm{an}}
\nc{\St}{\mrm{St}}
\nc{\pr}{\mrm{pr}}
\nc{\C}{\on{C}}
\nc{\Chom}{\mrm{C}_\bullet}
\nc{\Chomhat}{\widehat{\mrm{C}}_\bullet}
\nc{\Ccoh}{\mrm{C}^\bullet}
\nc{\Ccohhat}{\widehat{\mrm{C}}^\bullet}
\nc{\CBM}{\mrm{C}^{\BM}_\bullet}
\nc{\uAut}{\underline{\Aut}}
\nc{\per}{\vb{\ast}} 
\nc{\Lis}{\mrm{Lis}}
\nc{\aff}{{\mrm{aff}}}
\nc{\dR}{{\mrm{dR}}}
\nc{\Pic}{{\on{Pic}}}
\nc{\uGrp}{\underline{\smash{\mrm{Grp}}}}
\nc{\uPerf}{\underline{\smash{\mrm{Perf}}}}
\nc{\uPic}{\underline{\smash{\Pic}}}
\nc{\uMap}{\underline{\smash{\mrm{Map}}}}
\nc{\uDiv}{\underline{\smash{\mrm{Div}}}}
\nc{\uPair}{\underline{\smash{\mrm{Pair}}}}
\nc{\dash}{\textnormal{-}}
\nc{\nilp}{\mrm{nilp}}
\nc{\Rep}{\on{R}}
\nc{\dashmod}{\dash\mbf{mod}}
\nc{\bLambda}{\mbf{\Lambda}}
\renc{\max}{\mrm{max}}
\nc{\un}{\mbf{1}}
\nc{\Stk}{\mrm{Stk}}
\nc{\dStk}{\mrm{dStk}}
\nc{\scr}{\term{derived commutative ring}}
\nc{\scrs}{\term{derived commutative rings}}
\nc{\inftyCat}{\term{$\infty$-category}}
\nc{\inftyCats}{\term{$\infty$-categories}}
\nc{\inftyGrpd}{\term{$\infty$-groupoid}}
\nc{\inftyGrpds}{\term{$\infty$-groupoids}}
\nc{\dA}{\term{derived Artin}}
\title{The stacky concentration theorem\vspace{-2mm}}
\author[D. Aranha]{Dhyan Aranha}
\author[A.\,A. Khan]{Adeel A. Khan}
\author[A. Latyntsev]{Alexei Latyntsev}
\author[H. Park]{Hyeonjun Park}
\author[C. Ravi]{Charanya Ravi}
\date{2025-01-17}
\def\l@subsection{\@tocline{2}{0pt}{4pc}{6pc}{}}
\def\l@subsubsection{\@tocline{3}{0pt}{8pc}{8pc}{}}
\begin{document}

\begin{abstract}
  We give a sufficient criterion for the Chow or algebraic bordism groups of an algebraic stack, localized at a set of Chern classes of line bundles, to be concentrated in some closed substack.
  This is a vast generalization of the torus fixed-point localization theorem in equivariant intersection theory, which is the special case of the stack quotient of a scheme $X$ by an action of a torus $T$.
  Taking on the one hand an algebraic stack in place of $X$, we deduce a generalization of torus localization to algebraic stacks.
  Taking on the other hand any algebraic group $G$ instead of $T$, we obtain a localization theorem in $G$-equivariant intersection theory.
  \vspace{-5mm}
\end{abstract}

\maketitle

\renewcommand\contentsname{\vspace{-1cm}}
\tableofcontents

\setlength{\parindent}{0em}
\parskip 0.75em

\thispagestyle{empty}


\changelocaltocdepth{2}

\section*{Introduction}

  \subsection{Main results}

    The starting point of this paper is the \emph{concentration} theorem\footnote{%
      This is often simply called ``the localization theorem''.
      The less ambiguous terminology is due to R.\,W.~Thomason \cite{ThomasonLefschetz} as far as we know.
    } in equivariant intersection theory.
    Given a split torus $T$ over a field $k$ and a $k$-scheme $X$ with $T$-action, we denote by $\CH^T_*(X)$ the $T$-equivariant Chow group of $X$ with rational coefficients.
    Then we have (see \cite[Thm.~1]{EdidinGrahamLocalization}):
    
    \begin{thm}\label{thm:intro/classical conc}
      Let $X$ be a scheme of finite type over a field with an action of a split torus $T$.
      Let $i : X^T \hook X$ denote the inclusion of the $T$-fixed locus.
      Then the push-forward map on $T$-equivariant Chow groups induces an isomorphism
      \begin{equation*}
        i_* : \CH^T_*(X^T)_\loc \to \CH^T_*(X)_\loc
      \end{equation*}
      where the localization is at the set of first Chern classes of nontrivial $1$-dimensional representations of $T$.
    \end{thm}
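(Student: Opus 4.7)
The plan is to split the statement into \emph{surjectivity} and \emph{injectivity} of the localized push-forward, both via the localization long exact sequence
\[
  \CH^T_*(X^T)_\loc \xrightarrow{i_*} \CH^T_*(X)_\loc \xrightarrow{j^*} \CH^T_*(X \setminus X^T)_\loc \to 0
\]
associated to the open-closed decomposition $X^T \hook X \hookleftarrow X \setminus X^T$. The main technical input is the vanishing statement: if $Y$ is a finite-type $T$-scheme with $Y^T = \initial$, then $\CH^T_*(Y)_\loc = 0$. Applied to $Y = X \setminus X^T$, this vanishing yields surjectivity of $i_*$.

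To prove the vanishing, I would argue by Noetherian induction on $Y$. Applying generic flatness to the orbit map, one finds a $T$-invariant dense open $V \sub Y$ over which the $T$-action has constant stabilizer equal to some proper subtorus $T' \subsetneq T$; equivariantly $V$ is then built from orbits of type $T/T'$, so that $\CH^T_*(V)$ becomes a module over $\Sym(\widehat{T})_\Q$ whose action factors through the restriction map $\Sym(\widehat{T})_\Q \to \Sym(\widehat{T'})_\Q$. Since $T' \subsetneq T$, one may choose a nontrivial character $\chi \in \widehat{T}$ that is trivial on $T'$; its first Chern class $c_1(\chi)$ belongs to the localizing set yet acts as zero on $\CH^T_*(V)$, forcing $\CH^T_*(V)_\loc = 0$. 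The closed complement $Y \setminus V$ is a strictly smaller $T$-scheme still without fixed points, so the localization exact sequence combined with the inductive hypothesis completes the argument.

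Injectivity of $i_*$ is the \emph{self-intersection formula}. When $X$ is smooth, $X^T$ is smooth and $i$ is a regular immersion; the $T$-action on the normal bundle $N := N_{X^T/X}$ has only nonzero weights (a zero weight would enlarge $X^T$), so its equivariant Euler class $e^T(N) = i^* i_*(1)$ is a product of first Chern classes of nontrivial characters, which becomes a unit after localization. The identity $i^* i_* = e^T(N) \cdot \on{id}$ on $\CH^T_*(X^T)_\loc$ then forces $i_*$ to be injective. For singular $X$, one can either reduce to smooth $X$ via equivariant envelopes or resolution, or adapt the argument using the refined bivariant intersection-theoretic machinery (passing through deformation to the normal cone of $X^T \hook X$).

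I expect the main obstacle to be the vanishing step, specifically setting up the generic-stabilizer stratification so that the module-theoretic vanishing descends cleanly through the open-closed exact sequence, and ensuring this works uniformly in all characteristics (where the usual Bialynicki-Birula style decomposition arguments may need replacement). Once the single-orbit computation and the self-intersection formula are in place, the two halves of the argument combine formally to yield the claimed isomorphism.
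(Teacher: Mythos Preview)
Your vanishing argument for $\CH^T_*(X \setminus X^T)_\loc = 0$ via Noetherian induction and generic constant-stabilizer opens is essentially the paper's strategy (Thomason's generic slice, \propref{prop:slice}, together with the noetherian induction of \ssecref{ssec:musical}). Where you diverge is injectivity. The paper does \emph{not} argue it separately: it works at the level of the localization \emph{triangle}
\[
  \CBM(X^T) \xrightarrow{i_*} \CBM(X) \to \CBM(X\setminus X^T)
\]
of Borel--Moore chain complexes, and since $\Sigma$-localization is exact on complexes, vanishing of the third term makes $i_*$ an isomorphism outright. Equivalently, your vanishing argument applies verbatim to equivariant higher Chow groups $\CH^T_*(-,s)$ for every $s$, extending the right-exact sequence you wrote to a long exact one, so the kernel of $i_*$ is controlled by $\CH^T_*(X\setminus X^T, 1)_\loc = 0$.

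Your self-intersection route is correct when $X$ is smooth, but the singular case is a genuine gap rather than a routine detail. An envelope argument would require relating $\tilde X^T$ to $\pi^{-1}(X^T)$ and then chasing through the Kimura exact sequence for $\ker(\pi_*)$, which is nontrivial --- and the existence of smooth equivariant envelopes is itself delicate in positive characteristic. Deformation to the normal cone reduces injectivity of $i_*$ to injectivity of $0_* : \CH^T_*(X^T)_\loc \to \CH^T_*(C_{X^T}X)_\loc$, which is the same problem with the (generally singular) cone in place of $X$. You flagged the vanishing step as the likely obstacle, but that step is fine; the real obstacle is injectivity for singular $X$, and the cleanest fix is the one-line observation above that the vanishing already holds in every homological degree.
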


    This result forms the core of equivariant localization theory; for instance, it leads immediately to the localization formula of Atiyah and Bott \cite{AtiyahBott} when $X$ is smooth and to the Bott residue formula \cite{Bott} when $X$ is smooth and proper.
    
    In this paper we prove a new type of concentration theorem for general algebraic stacks.
    Let $\CH_*(\sX)$ denote the cycle groups (with rational coefficients) defined in \cite{KhanVirtual} for any algebraic stack $\sX$ of finite type over $k$.\footnote{%
      That is, $\CH_n(\sX)$ is defined as the motivic Borel--Moore homology group $\H^\BM_{2n}(\sX; \Q^\mot(-n))$ of \cite{KhanVirtual}, and $\CH_*(\sX)$ is the direct sum over $n\in\Z$.
    }
    For the quotient stack $\sX=[X/T]$, this agrees with the $T$-equivariant cycle groups $\CH_*^T(X)$.\footnote{%
      By \cite[Cor.~6.5]{Equilisse}, we have $\CH_n(\sX) \simeq \CH_{n-\dim(T)}^T(X)$.
    }
    We claim (see \corref{cor:conc}):
    
    \begin{thmX}[Stacky concentration]\label{thm:intro/stacky conc}
      Let $\sX$ be an Artin stack of finite type over a field $k$ with affine stabilizers and $\sZ\sub \sX$ a closed substack.
      Let $\Sigma$ be a set of line bundles on $\sX$ such that for every geometric point $x$ of $\sX\setminus\sZ$, there exists $\cL(x) \in \Sigma$ whose restriction to $B\uAut_\sX(x)$ is trivial.
      Then push-forward along $i : \sZ \hook \sX$ induces an isomorphism
      \[
        i_* : \CH_*(\sZ)[\Sigma^{-1}] \to \CH_*(\sX)[\Sigma^{-1}]
      \]
      where $\cL \in \Sigma$ acts via the first Chern class.
    \end{thmX}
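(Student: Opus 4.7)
The plan is to apply the localization long exact sequence in motivic Borel--Moore homology to the closed-open decomposition $\sZ \hook \sX \hookleftarrow \sU$ with $\sU := \sX \setminus \sZ$, and reduce the theorem to the single vanishing statement
\[
  \CH_*(\sU)[\Sigma^{-1}] = 0.
\]
Since localization at $\Sigma$ is an exact functor, this vanishing, applied in all degrees, forces $i_*$ to be an isomorphism on the localized sequence.

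The vanishing I will prove by Noetherian induction on closed reduced substacks of $\sU$: it suffices to exhibit, for every nonempty reduced closed $\sV \subseteq \sU$, a nonempty open $\sV^\circ \subseteq \sV$ with $\CH_*(\sV^\circ)[\Sigma^{-1}] = 0$. Combined with the inductive hypothesis on the closed complement $\sV \setminus \sV^\circ$ via a second application of the localization sequence, this yields vanishing for $\sV$, and iteratively for $\sU$ itself.

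To produce such a $\sV^\circ$, I would appeal to the stratification theory of finite-type algebraic stacks with affine stabilizers: shrinking to a dense open, one may assume $\sV$ has constant generic stabilizer $G_0$ and is a gerbe $\pi : \sV^\circ \to \sV_\mrm{rig}$ banded by $G_0$ over a Deligne--Mumford (or algebraic-space) rigidification. The hypothesis at the generic point of $\sV^\circ$ yields $\cL \in \Sigma$ whose restriction to the residual $BG_0$ is trivial; after further shrinking, this triviality spreads, and $\cL$ becomes the pullback $\pi^* \cL_\mrm{rig}$ of a line bundle on $\sV_\mrm{rig}$, possibly up to a twist. Consequently $c_1(\cL) = \pi^* c_1(\cL_\mrm{rig})$, and since $\sV_\mrm{rig}$ is a finite-type Deligne--Mumford stack whose Chow groups are bounded in dimension, $c_1(\cL_\mrm{rig})$ is nilpotent on $\CH_*(\sV_\mrm{rig})$; pulling back, so is $c_1(\cL)$ as an operator on $\CH_*(\sV^\circ)$, yielding the desired vanishing on inverting it.

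The main obstacle is the descent step: making precise that a line bundle on $\sV^\circ$ trivial on each residual gerbe must, after shrinking, come from the rigidification. For non-neutral gerbes this fails at the level of strict isomorphism, since the band class in $\H^2(\sV_\mrm{rig}; G_0)$ can obstruct descent, so one should either trivialize the gerbe generically via an \'etale cover or work with twisted line bundles and absorb the twist into $\Sigma$. The cleanest route likely first reduces to the case $G_0 = T$ a torus by passing to the $(G_0/T)$-torsor cover: characters of $T$ vanishing on the generic stabilizer furnish the required $\cL$, and the classical gerbe theory for tori makes the descent transparent. Verifying that this reduction is compatible with the Chow-pushforward and with localization at $\Sigma$ is where the bulk of the technical work will lie.
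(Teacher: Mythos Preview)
Your overall architecture matches the paper's: reduce via the localization triangle to vanishing on $\sU=\sX\setminus\sZ$, then argue by noetherian induction that on a suitable dense open some $c_1(\cL)$ with $\cL\in\Sigma$ acts nilpotently. The divergence is in how that open is produced. The paper does \emph{not} try to descend $\cL$ through a gerbe. Instead it first invokes the Hall--Rydh stratification to reduce to a global quotient $[X/\GL_n]$ with $X$ a quasi-affine scheme; then applies a ``reduction to the subtorus'' step (the Gysin pullback along $[X/T]\to[X/\GL_n]$ admits a retract, via the projective bundle formula for the flag variety $\GL_n/B$) to replace $\GL_n$ by its maximal torus $T$; and finally uses Thomason's generic slice to find a $T$-invariant open $U\sub X$ with $[U/T]\simeq V\times BH$ for a diagonalizable $H$ and an affine scheme $V$. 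On a genuine product $V\times BH$ the descent question you raise evaporates: line bundles split as $\cL_m\boxtimes(m\cdot\cO_{BH})$, triviality on $BH_{k(x)}$ forces the character $m$ to be trivial, and $c_1(\cL_m)$ is nilpotent because $V$ is a scheme.

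The step in your outline that would not go through as written is the ``$(G_0/T)$-torsor cover'' of an abstract $G_0$-gerbe: there is no such cover in general, and the retract argument the paper uses is only available once one is already on a quotient $[X/G_0]$ of a scheme. Your gerbe route can be made to work, but only after inserting the Hall--Rydh reduction to quotient stacks first; at that point your descent worry for non-neutral gerbes disappears, because Thomason's slice hands you a neutral one. So the missing ingredient is not the descent lemma you anticipate, but rather the pair (global-quotient stratification) $+$ (subtorus retract via the projective bundle formula), which together let one bypass descent entirely.
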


    From this we derive the following generalization of \thmref{thm:intro/classical conc} from schemes to Artin stacks (see \thmref{thm:excitor}).
    Given an Artin stack $X$ over $k$ with $T$-action, we write $\CH^T_*(X)$ for the $T$-equivariant cycle groups of $X$ (with rational coefficients).\footnote{%
      That is, $\CH^T_n(X)$ is defined as the relative motivic Borel--Moore homology group $\H^\BM_{2n}([X/T]_{/BT}; \Q^\mot(-n))$ of \cite{KhanVirtual}, and $\CH^T_*(X)$ is the direct sum over $n\in\Z$.
      For $X$ a scheme or algebraic space, this agrees with the Edidin--Graham equivariant cycle groups again by \cite[Cor.~6.5]{Equilisse}.
    }

    \begin{thmX}[Torus concentration]\label{thm:intro/torus conc}
      Let $X$ be an Artin stack of finite type over $k$ with $T$-action.
      Let $Z\sub X$ be a $T$-invariant closed substack away from which every point has $T$-stabilizer properly contained in $T$.\footnote{%
        Informally speaking, the condition on $Z$ means that it contains every $T$-fixed point.
        See \ssecref{ssec:gstab} for the notion of $T$-stabilizers (which are not the same as the stabilizers of $[X/T]$ when $X$ is not an algebraic space).
      }
      Then push-forward along $i : Z \hook X$ induces an isomorphism
      \begin{equation}\label{eq:liturgiological}
        \CH^T_*(Z)_\loc \to \CH^T_*(X)_\loc
      \end{equation}
      where the localization is as in \thmref{thm:intro/classical conc}.
    \end{thmX}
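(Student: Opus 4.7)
The plan is to apply \thmref{thm:intro/stacky conc} to $\sX = [X/T]$ with closed substack $\sZ = [Z/T]$. The identification $\CH_n([X/T]) \cong \CH^T_{n-\dim T}(X)$ of \cite[Cor.~6.5]{Equilisse} (and its counterpart for $Z$) transports the absolute push-forward $[i/T]_*$ to the equivariant push-forward $i_*$, so it will suffice to construct a set $\Sigma$ of line bundles on $[X/T]$ satisfying the hypothesis of \thmref{thm:intro/stacky conc} and acting on $\CH_*([X/T])$ as the first Chern classes of nontrivial characters of $T$ act on $\CH^T_*(X)$.

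I would take $\Sigma := \{\,p^*\cL_\chi : \chi \in X^*(T) \setminus \{0\}\,\}$, where $p : [X/T] \to BT$ is the structure map and $\cL_\chi$ denotes the line bundle on $BT$ classified by the character $\chi$. Under the identification above, the first Chern class action of $p^*\cL_\chi$ on $\CH_*([X/T])$ corresponds to that of $\chi$ on $\CH^T_*(X)$, so inverting $\Sigma$ will realize the localization appearing in \eqref{eq:liturgiological}.

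To verify the point-wise hypothesis, fix a geometric point $x$ of $[X/T] \setminus [Z/T]$ and a lift $\tilde x \in (X \setminus Z)(\bar k)$. The natural map $\uAut_{[X/T]}(x) \to T$ has image equal to the $T$-stabilizer $\on{Stab}_T(\tilde x)$ in the sense of \ssecref{ssec:gstab}, which by hypothesis is a proper closed subgroup $H \subsetneq T$. Since $T$ is a split torus and $\dim H < \dim T$, the restriction $X^*(T) \to X^*(H)$ has nontrivial kernel; for any nonzero $\chi$ in this kernel, the composite $\uAut_{[X/T]}(x) \to H \xrightarrow{\chi} \Gm$ is trivial since $\chi|_H = 1$, so $p^*\cL_\chi$ pulls back trivially to $B\uAut_{[X/T]}(x)$. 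Applying \thmref{thm:intro/stacky conc} and unwinding the identifications then yields \eqref{eq:liturgiological}.

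The main obstacle will be establishing the identification $\on{Im}(\uAut_{[X/T]}(x) \to T) = \on{Stab}_T(\tilde x)$. For $X$ a scheme or algebraic space this is routine, but when $X$ is an Artin stack with nontrivial $\uAut_X(\tilde x)$ contributing to the kernel of $\uAut_{[X/T]}(x) \to T$, one needs the notion of $T$-stabilizer from \ssecref{ssec:gstab} to be set up precisely so that this equality holds; securing this compatibility, together with checking that $[X/T]$ inherits affine stabilizers from $X$ so that \thmref{thm:intro/stacky conc} is applicable, is what makes the reduction go through cleanly.
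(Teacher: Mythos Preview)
Your approach is essentially the paper's: apply stacky concentration to $\sX=[X/T]$ with $\Sigma$ the pullbacks of nontrivial characters, checking the point-wise condition by producing, for each proper subgroup $H\subsetneq T$ arising as a $T$-stabilizer, a nontrivial character of $T$ trivial on $H$ (this is \remref{rem:roundeleer} and \examref{exam:a0pb10pb} in the paper). The ``main obstacle'' you flag is not one: the equality $\Im(\uAut_{[X/T]}(x)\to T)=\St^T_X(\tilde x)$ is the \emph{definition} of the $T$-stabilizer (\defnref{defn:stabilizer}), and the commutative square of \corref{cor:conc master equiv} shows that triviality of $\cL_\chi$ on $B\St^T_X(\tilde x)$ already suffices.

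There is, however, a genuine gap. \thmref{thm:intro/torus conc} (and its full form, \thmref{thm:excitor}) does \emph{not} assume $X$ has affine stabilizers, whereas \thmref{thm:intro/stacky conc} does require this of $\sX$. Your remark about ``$[X/T]$ inheriting affine stabilizers from $X$'' presumes $X$ has them, which is not given. The paper's proof of \thmref{thm:excitor} treats the non-affine case separately: after reducing via the localization triangle to vanishing of $\Chom^{\BM,T}(X\setminus Z)_\loc$, one uses \cite[Tag~06QJ]{Stacks} and generic flatness of inertia to find a $T$-invariant open that is a gerbe over a finite-type algebraic space $M$; the $T$-stabilizers are preserved along $X\to M$, so one runs the vanishing argument on $M$ (where affine stabilizers are automatic) and concludes by noetherian induction. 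Without this extra step your argument proves only the affine-stabilizer case.
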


  \subsection{Examples, variants, and generalizations}

    \subsubsection{Deligne--Mumford stacks}

      When $X$ is a Deligne--Mumford stack, \thmref{thm:intro/torus conc} was proven by A.~Kresch in the case where the torus $T$ is of rank $1$ and the field $k$ is algebraically closed (see \cite[Thm.~5.3.5]{Kresch}).
      
      Note that, at least when $X$ is separated, $\CH^T_*(X) \simeq \CH_*([X/T])$ agrees with Kresch's cycle groups \cite{Kresch}.
      Indeed, $X$ admits a coarse moduli space $M$ inheriting the $T$-action and both cycle groups are canonically isomorphic to $\CH^T_*(M)$ (recall that we are working with rational coefficients).
      
    \subsubsection{Artin stacks with finite stabilizers}

      Suppose $X$ is an Artin stack with finite stabilizers; recall that when $k$ is of positive characteristic, $X$ need not be Deligne--Mumford.
      For example, this applies to moduli stacks of stable objects in enumerative geometry, e.g. moduli of stable maps in Gromov--Witten theory and moduli of stable sheaves in Donaldson--Thomas theory.

      As long as $X$ is separated (or more generally has finite inertia), $\CH^T_*(X)$ still agrees with Kresch's cycle groups for the same reason as in the Deligne--Mumford case.

    \subsubsection{Global quotient stacks}

      Suppose $G$ is a linear algebraic group over $k$ and $X$ is an algebraic space of finite type over $k$ with commuting actions of $G$ and $T$.
      Suppose $Z \sub X$ is a closed subspace invariant under both actions which contains all points whose $T$-stabilizer is the whole torus $T$.
      Then \thmref{thm:intro/torus conc} applied to $\sX=[X/G]$ yields the isomorphism
      \begin{equation}
        i_* : \CH^{G\times T}_*(Z)_\loc \to \CH^{G\times T}_*(X)_\loc
      \end{equation}
      on $G\times T$-equivariant Chow groups.
      This generalizes a concentration theorem proven by A.~Minets \cite{Minets} under a much more restrictive technical condition.

    \subsubsection{Locally of finite type stacks}

      We will also prove a generalization of \thmref{thm:intro/torus conc} to \emph{locally} of finite type Artin stacks.
      In fact, the result is false in this generality with the na\"ive definition of $\CH^T_*(X)_\loc$.
      In \ssecref{ssec:nonqc} we will give another definition of the latter that agrees with the na\"ive localization in the quasi-compact case, but for which concentration does hold.
      For example, this applies to the moduli stack of Higgs sheaves on a curve, which admits a canonical scaling action of $T=\bG_m$.
      This was one of the motivating examples\footnote{%
        We thank Y.~Soibelman for suggesting it; see \cite{FedorovSoibelmanSoibelman} for an analogous analysis at the level of virtual motives.
      } (see \thmref{thm:Higgs}):

      \begin{corX}\label{cor:intro/Higgs}
        Let $k$ be a field and $C$ a smooth proper and geometrically connected curve over $k$.
        Denote by $\Higgs$ the moduli stack of Higgs sheaves on $C$ and by $\bLambda$ the closed substack parametrizing nilpotent Higgs sheaves.
        Then the map
        \begin{equation*}
          \CH^{T}_*(\bLambda)_\loc \to \CH^{T}_*(\Higgs)_\loc
        \end{equation*}
        is invertible.
      \end{corX}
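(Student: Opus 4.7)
The plan is to apply \thmref{thm:intro/torus conc}, together with its extension to stacks locally of finite type developed in \ssecref{ssec:nonqc}, to the scaling action of $T = \Gm$ on $\Higgs$ defined by $t\cdot(\cE,\phi) = (\cE, t\phi)$, with closed substack $Z = \bLambda$. The output will then be exactly the isomorphism asserted.

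First I would check the formal hypotheses. The stack $\Higgs$ is Artin and locally of finite type, and its stabilizers are affine: $\uAut_{\Higgs}(\cE,\phi)$ is a closed subgroup scheme of $\uAut(\cE)$, which is open in the affine scheme $\End(\cE)$. The substack $\bLambda \hook \Higgs$ is closed because it is the fiber over $0$ of the Hitchin map $h : \Higgs \to \bigoplus_{i \ge 1} \H^0(C, \omega_C^{\otimes i})$. It is $T$-invariant, since the characteristic polynomial of $t\phi$ vanishes at the same points as that of $\phi$.

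The main content is verifying the $T$-stabilizer condition: every geometric point of $\Higgs \setminus \bLambda$ must have $T$-stabilizer strictly smaller than $T$. Equivalently, every geometric point with $T$-stabilizer equal to $T$ should lie in $\bLambda$. Unwinding the definition of \ssecref{ssec:gstab}, a geometric point $(\cE,\phi)$ has $T$-stabilizer equal to all of $T$ precisely when the family $t \mapsto (\cE, t\phi)$ over $T$ is $T$-equivariantly isomorphic to the constant family; this is in turn the data of a weight grading $\cE = \bigoplus_{i\in\Z}\cE_i$ under which $\phi$ sends $\cE_i$ into $\cE_{i+1}\otimes \omega_C$. Since $\cE$ has finite rank the grading has finite support, so $\phi$ is nilpotent, i.e.\ $(\cE,\phi) \in \bLambda$, as required.

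The principal obstacle is that $\Higgs$ is not of finite type --- Higgs sheaves can be arbitrarily unstable --- so \thmref{thm:intro/torus conc} cannot be applied directly. The remedy is to invoke the locally-of-finite-type generalization of \ssecref{ssec:nonqc}, whose modified definition of the localized equivariant Chow group is engineered so that concentration continues to hold. Concretely one exhausts $\Higgs$ by a filtered system of $T$-invariant open substacks of finite type (for instance, cut out by bounds on the Harder--Narasimhan type of the underlying sheaf), applies \thmref{thm:intro/torus conc} to each term, and passes to the colimit; the definition of $\CH^T_*(-)_\loc$ in \ssecref{ssec:nonqc} ensures that this colimit computes both sides of \eqref{eq:liturgiological} and that the maps $i_*$ assemble compatibly.
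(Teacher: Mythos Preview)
Your overall strategy matches the paper's: apply torus concentration (\thmref{thm:excitor}) in its non-quasi-compact form (\ssecref{ssec:nonqc}) and verify the $T$-stabilizer condition on $\Higgs \setminus \bLambda$. The central step, however, has a genuine gap. You assert that $\St^T_{\Higgs}(\cE,\phi) = T$ is equivalent to the family $t \mapsto (\cE,t\phi)$ being $T$-equivariantly isomorphic to the constant family, and hence to a weight grading on $\cE$. But by \defnref{defn:stabilizer}, the condition $\St^T_{\Higgs}(\cE,\phi)=T$ only says that the homomorphism $\uAut_{[\Higgs/T]}(\cE,\phi) \to T$ is \emph{surjective}; a group-theoretic section---which is what your equivariant trivialization amounts to---is the strictly stronger condition defining the homotopy fixed point stack $X^{hT}$ (\defnref{defn:hfix}). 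For stacks with nontrivial automorphism groups these differ (cf.\ \thmref{thm:fixed}\itemref{item:fixed/reparam}), so surjectivity alone does not furnish the grading you use to conclude nilpotence.

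The paper circumvents this by never computing the $T$-stabilizer on $\Higgs$ directly. After reducing via the maximal torsion subsheaf to the locally free and torsion cases separately, it produces in each case a $T$-equivariant map to an algebraic \emph{space}---the Hitchin fibration to $\bigoplus_i \H^0(C,K_C^{\otimes i})$ for locally free sheaves, and the Hilbert--Chow map to $\Sym^d(T^*_C)$ (via the BNR correspondence) for torsion sheaves---and observes that the ordinary $T$-fixed locus of the target is exactly the image of the nilpotent locus. Then \corref{cor:pull back moduli space} and \corref{cor:leisureliness} pull the stabilizer condition back to $\Higgs$. Two smaller slips: your Hitchin-map argument for the closedness of $\bLambda$ does not see the torsion components (the rank-zero Hitchin base is a point, so its zero fibre is everything), and the construction in \ssecref{ssec:nonqc} is an inverse \emph{limit} over quasi-compact opens, not a colimit.
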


      This generalizes a result of A.~Minets \cite[Cor.~4.3]{Minets}, who considered the substack of \emph{torsion} (rank zero) Higgs sheaves.

    \subsubsection{Localizing K-theory classes}

      The following variant of \thmref{thm:intro/stacky conc} can be more useful when there is not a sufficient supply of $1$-dimensional representations.

      With notation as in \thmref{thm:intro/stacky conc}, let $\Sigma \sub \K_0(\sX)$ be a set of K-theory classes\footnote{%
        Here, $\K_0(\sX)$ refers to the Grothendieck group of perfect complexes on $\sX$.
      } such that for every geometric point $x$ of 
      $\sX\setminus\sZ$, there exists an $\alpha(x) \in \Sigma$ whose restriction to $B\uAut_\sX(x)$ is trivial.
      Let $\hat{\CH}_*(\sX)$ denote the \emph{product} of the cycle groups $\CH_n(\sX)$ over $n\in\Z$.
      Then we will prove that $i_*$ induces an isomorphism
      \begin{equation}
        i_* : \hat{\CH}_*(\sZ)[\Sigma^{-1}] \to \hat{\CH}_*(\sX)[\Sigma^{-1}]
      \end{equation}
      where $\cL \in \Sigma$ acts via the Chern character.
      See \thmref{thm:conc master rational}.

    \subsubsection{General algebraic group actions}

      We can specialize stacky concentration to quotients by arbitrary algebraic group actions $[X/G]$.
      We get the following generalization of \thmref{thm:intro/torus conc} (see \corref{cor:equiv conc}):

      \begin{corX}[Equivariant concentration]\label{cor:intro/equiv conc}
        Let $G$ be an algebraic group acting on an Artin stack $X$ of finite type over a field $k$ with affine stabilizers.
        Let $\Sigma \sub \K_0(BG)$ be a subset of nonzero elements.
        Let $Z\sub X$ be a $G$-invariant closed substack containing every point $x$ of $X$ such that no element of $\Sigma$ is sent to zero by $\K_0(BG) \to \K_0(B\St^G_X(x))$.
        Then
        \[
          i_* : \hat{\CH}^{G}_*(Z)[\Sigma^{-1}] \to \hat{\CH}^{G}_*(X)[\Sigma^{-1}]
        \]
        is an isomorphism.
      \end{corX}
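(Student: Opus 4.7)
The plan is to deduce this corollary from the K-theoretic stacky concentration statement \thmref{thm:conc master rational} by specializing to the quotient stack $\sX := [X/G]$ with closed substack $\sZ := [Z/G]$.

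First, I would observe that $\sX$ is an Artin stack of finite type over $k$ with affine stabilizers: each stabilizer of $\sX$ fits into an extension of a closed subgroup of $G$ (affine, as $G$ is linear algebraic) by a stabilizer of $X$ (affine by hypothesis). The structure map $\pi \colon \sX \to BG$ induces a pullback $\pi^* \colon \K_0(BG) \to \K_0(\sX)$, and I would set $\Sigma' := \pi^*(\Sigma) \subset \K_0(\sX)$. A geometric point $x$ of $\sX$ corresponds to a $\bar k$-point of $X$, and by the standard description of the inertia of a quotient stack its automorphism group in $\sX$ satisfies $\uAut_\sX(x) = \St^G_X(x)$. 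Thus, on classes of the form $\pi^* \alpha$, the restriction $\K_0(\sX) \to \K_0(B\uAut_\sX(x))$ factors through $\K_0(BG) \to \K_0(B\St^G_X(x))$.

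Next, I would verify the hypothesis of \thmref{thm:conc master rational} at each $x \in \sX \setminus \sZ$: the defining property of $Z$ supplies an $\alpha(x) \in \Sigma$ whose image in $\K_0(B\St^G_X(x))$ vanishes, so that $\pi^* \alpha(x) \in \Sigma'$ restricts to zero at $B\uAut_\sX(x)$. Applying \thmref{thm:conc master rational} to $(\sX, \sZ, \Sigma')$ then yields an isomorphism
\[
  i_* \colon \hat{\CH}_*(\sZ)\bigl[\ch(\Sigma')^{-1}\bigr] \xrightarrow{\sim} \hat{\CH}_*(\sX)\bigl[\ch(\Sigma')^{-1}\bigr].
\]

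Finally, to translate this into the corollary I would identify $\hat{\CH}^G_*(-) \cong \hat{\CH}_{*+\dim G}([-/G])$, as recorded by the introductory footnote relating the relative motivic Borel--Moore homology over $BG$ to the absolute one on $\sX$ via the smooth morphism $\pi$. This degree shift is compatible with proper push-forward along $i$ and with multiplication by $\ch(\pi^* \alpha) = \pi^* \ch(\alpha)$, so the localized isomorphism above transfers to the one claimed. The main obstacle is precisely this last compatibility check in the \emph{completed} cycle groups $\hat{\CH}_*$ (since $\dim G$ is an overall shift, not a finite truncation); it should follow from the base-change and smooth-pullback functoriality of motivic Borel--Moore homology along $\sX \to BG$ developed earlier in the paper.
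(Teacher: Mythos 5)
Your overall plan matches the paper's: specialize the K-theoretic master theorem (\thmref{thm:conc master rational}) to $\sX=[X/G]$, $\sZ=[Z/G]$, with $\Sigma'=\pi^*\Sigma$ pulled back along $\pi\colon\sX\to BG$, and identify $\hat{\CH}^G_*(-)$ with the completed cycle groups of the quotient stack over $BG$. That is exactly how the paper organizes it (this is what \ssecref{ssec:gmast} does, going through condition \itemref{cond:K_G}).

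However, there is a genuine error in your verification of condition~\itemref{cond:K}. You assert that $\uAut_\sX(x)=\St^G_X(x)$ by ``the standard description of the inertia of a quotient stack.'' This identity holds only when $X$ is an algebraic space (\remref{rem:noboh1b1}). In the present setting $X$ is allowed to be an Artin stack with nontrivial stabilizers, and then \eqref{eq:stabses} gives only a short exact sequence
\[
1 \to \uAut_X(x) \to \uAut_\sX(x) \to \St^G_X(x) \to 1,
\]
so $\St^G_X(x)$ is a proper quotient of $\uAut_\sX(x)$ in general. The correct way to get the factorization you need is the commutative square recorded in \remref{rem:zymogenous} and used in the proof of \corref{cor:conc master equiv}: the surjection $\uAut_\sX(x)\twoheadrightarrow\St^G_X(x)$ induces $B\uAut_\sX(x)\to B\St^G_X(x)$, which commutes with $\sX\to BG$. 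Restricting $\pi^*\alpha$ along $B\uAut_\sX(x)\hookrightarrow\sX$ then factors through $\alpha|_{B\St^G_X(x)}$, so if the latter vanishes, so does the former. With that replacement, your verification of condition~\itemref{cond:K} for $\sX\setminus\sZ$ is correct, and the rest of the argument (localization triangle plus the degree shift by $\dim G$, which is harmless in $\hat{\CH}_*=\prod_n\CH_n$ and is handled automatically by working with $\CBM([-/G]_{/BG})$) goes through as you indicate.
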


    \subsubsection{Oriented Borel--Moore homology theories}

      Our methods work for general oriented Borel--Moore type homology theories.
      This includes usual Borel--Moore homology (over $k=\bC$) and $\ell$-adic Borel--Moore homology (over a field in which $\ell$ is invertible).

      We also prove concentration for algebraic bordism, namely for the stacky and equivariant (higher) algebraic bordism theories defined in \cite[Ex.~2.12]{KhanVirtual} and \cite[\S 8]{Equilisse}.
      The analogous statement for Levine--Morel algebraic bordism was proven for smooth and projective varieties over a field of characteristic zero by A.~Krishna \cite{KrishnaTorusLocalization}, and generalized recently in \cite{KiemPark} to (possibly singular) quasi-projective schemes.

      Our methods also apply to G-theory (= algebraic K-theory of coherent sheaves).
      In this paper, we only consider étale G-theory and Borel-equivariant G-theory (in the sense of \cite[\S 7]{Equilisse}).
      However, the case of ``genuine'' G-theory can be treated by the same method and will be done in detail in \cite{Kloc}.

    \subsubsection{Categorification}

      In a sequel \cite{Concat} to this paper, two of the authors proved a categorification of the stacky concentration theorem.
      For example, consider the stable \inftyCat $\DM(X)_\Q$ of ($\Q$-linear) motivic sheaves on an Artin stack $X$.
      In the situation of \thmref{thm:intro/stacky conc}, we have the $*$-direct image functor $i_* : \DM(\sZ)_\Q \to \DM(\sX)_\Q$.
      Categorified concentration asserts that $i_*$ induces an equivalence of stable \inftyCats
      \begin{equation}
        i_* : \DM(\sZ)_\Q[\Sigma^{-1}] \simeq \DM(\sX)_\Q[\Sigma^{-1}]
      \end{equation}
      where the localization is in an appropriate $\infty$-categorical sense.

      This result is used to obtain concentration for oriented \emph{cohomology} theories, such as motivic cohomology or Betti and $\ell$-adic cohomology, where it is the pullback map $i^* : \H^*(\sX) \to \H^*(\sZ)$ that induces an isomorphism after localization.

  \subsection{Proof}

    Let us briefly comment on the proof of the stacky concentration theorem (\thmref{thm:intro/stacky conc}).
    The cohomological formalism for intersection theory on stacks of \cite{KhanVirtual} plays an essential role in reducing this vastly general statement to a drastically simpler one.

    The cycle groups $\CH_n(\sX)$ are realized as the motivic cohomology groups
    \[
      \CH_n(\sX) = \H^{-2n}(\sX, f^!\Q^\mot(-n))
    \]
    where $\Q^\mot(-n) \in \DM(\Spec(k))$ is the Tate twist in Voevodsky motives, $f : \sX \to \Spec(k)$ is the projection, and $f^! : \DM(\Spec(k))_\Q \to \DM(\sX)_\Q$ is the compactly supported inverse image functor on stable \inftyCats of ($\Q$-linear) motivic sheaves.
    One can think of the right-hand side above as a motivic version of Borel--Moore homology: the cohomology of the dualizing sheaf, in the context of motivic sheaves.

    The $\CH_n(\sX)$ are just zeroth cohomology groups of the \emph{chain complexes} of derived global sections,
    \begin{equation*}
      \CBM(\sX)\vb{-n} := R\Gamma(\sX, f^! \bQ^\mot\vb{-n}),
    \end{equation*}
    which we regard as objects of the derived \inftyCat of chain complexes of $\Q$-vector spaces, where $\vb{-n} := (-n)[-2n]$.
    When $\sX$ is a scheme $X$, $\CBM(X)\vb{-n}$ coincides with the Bloch cycle complex $\mbf{z}_n(X)$ (see \cite[Ex.~2.10]{KhanVirtual}).
    One may thus think of $\CBM(\sX)\vb{-n}$ as a Bloch cycle complex for $\sX$.\footnote{Note that we do not know of any way to define the Bloch cycle complex $\mbf{z}_n(\sX)$ directly (i.e., without passing through motivic cohomology), despite two of the authors' attempts to do so.}
    
    For the proof of stacky concentration, our key tool is the localization triangle: for any closed substack $\sZ$ of $\sX$ with open complement $\sU = \sX\setminus\sZ$, there is an exact triangle of complexes
    \begin{equation*}
      \CBM(\sZ)\vb{-n} \to \CBM(\sX)\vb{-n} \to \CBM(\sU)\vb{-n}
    \end{equation*}
    This is a powerful extension of the right-exact localization sequence $\CH_n(\sZ) \to \CH_n(\sX) \to \CH_n(\sU) \to 0$.
    Using the formalism of higher algebra \cite{LurieHA}, the localization $[\Sigma^{-1}]$ may be performed at the level of complexes, and preserves such exact triangles.

    This device allows us to perform a series of reductions.
    First, to prove that push-forward along a closed immersion induces an isomorphism after $\Sigma$-localization, it suffices by localization to demonstrate $\Sigma$-acyclicity of motivic Borel--Moore chains on the complement.
    To show $\Sigma$-acyclicity of $\CBM(\sX)\vb{\ast}$ for some $\sX$, we may apply localization inductively to a stratification of $\sX$ by global quotient stacks to reduce to the case of $\sX = [X/\GL_m]$ a global quotient.
    By a variant of Thomason's ``reduction to the subtorus'' argument, which we prove for $\CBM(-)\vb{\ast}$ (see \thmref{thm:Weyl}), we can replace $\GL_m$ by its maximal subtorus $T$.
    This $T$-action is generically trivial, up to ``Morita equivalence'', so yet another application of the localization triangle reduces us to the following straightforward observation: if $\sX=[X/G]$ is the quotient of a scheme by the trivial action of a diagonalizable group scheme, such that there exists a line bundle $\sL \in \Sigma$ and a morphism $BG \to \sX$ with $c_1(\sL|_{BG}) = 0$, then the localization $\CBM(\sX)[\Sigma^{-1}]$ vanishes.
    We refer to \ssecref{ssec:musical} for the details.

  \subsection{Localization formulas}

    An immediate consequence of torus concentration is the Atiyah--Bott localization formula, which is an explicit computation of the inverse of the isomorphism $i_*$.
    In the situation of \thmref{thm:intro/torus conc}, suppose $X$ and $Z$ are smooth Artin stacks.
    Then the inclusion $i : Z \hook X$ is lci and we have a Gysin pull-back $i^! : \CH^T_*(X) \to \CH^T_*(Z)$ which satisfies the self-intersection formula
    \[ i^! \circ i_* (-) = (-) \cap e(N) \]
    where $e(N)$ is the top Chern class of the normal bundle.
    As long as the latter is invertible\footnote{%
      This is true when $Z$ has trivial $T$-action and finite stabilizers, and $N$ has no $T$-fixed part; see \cite[Prop.~1.3, Cor.~A.31]{virloc}.
    } in $\CH^*_T(Z)[\Sigma^{-1}]$, this immediately yields the localization formula
    \begin{equation}
      (i_*)^{-1} = i^!(-) \cap e(N)^{-1}.
    \end{equation}

    Moreover, the same strategy can be adapted to the ``virtual'' version, where $X$ and $Z$ are \emph{quasi-smooth}.
    This gives a conceptual proof of the virtual localization formula of \cite{GraberPandharipande} over arbitrary base fields, and shows that it holds without the global resolution and embeddability hypotheses required in \emph{op. cit}.
    We refer to \cite{virloc} for details.
    
  \subsection{Contents of the paper}

    We begin in \secref{sec:convent} by establishing some notation and conventions on algebraic stacks.
    In \ssecref{ssec:gstab} we introduce the $G$-stabilizer group at a point of an Artin stack $X$ with $G$-action; note that these are not the same as the stabilizers of the quotient stack $[X/G]$ when $X$ itself has nontrivial stabilizers.
    \ssecref{ssec:fixed} contains a review of various notions of $G$-fixed loci in the context of stacks, treated in detail in the companion paper \cite{virloc}.

    In \secref{sec:CohIT} we review the cohomological formalism for intersection theory on stacks developed in \cite{KhanVirtual}.
    We record the projective bundle formula in this context (\ssecref{ssec:PBF}).
    In \ssecref{ssec:subtorus} we establish an analogue of Thomason's ``reduction to the subtorus'', a very useful technique for reducing statements about $G$-equivariant intersection theory to the case where $G$ is a split torus.

    The stacky concentration theorem (\thmref{thm:intro/stacky conc}) is stated and proven in \secref{sec:conc}.
    We specialize to quotient stacks to derive equivariant concentration (\corref{cor:intro/equiv conc}) in \ssecref{ssec:gmast}.
    \secref{sec:torus} is devoted to the torus-equivariant case (\thmref{thm:intro/torus conc}), where we can derive a more explicit statement.
    We end with some examples, including concentration for the moduli stack of Higgs bundles on a curve with its canonical $\Gm$-scaling action (\corref{cor:intro/Higgs}).

  \subsection{Acknowledgments}

    We would like to thank Marc Levine and Yan Soibelman for their interest in this work, and Alexandre Minets for help with moduli stacks of Higgs sheaves.
    We would also like to thank Tasuki Kinjo who participated in this project in the initial stages, as well as the anonymous referees for very helpful comments.

    We acknowledge support from the ERC grant QUADAG (D.A.), the NSTC grant 110-2115-M-001-016-MY3 (A.A.K.), the DFG through SFB 1085 Higher Invariants (C.R.), and the EPSRC grant no EP/R014604/1 (A.A.K. and C.R.).
    We also thank the Isaac Newton Institute for Mathematical Sciences, Cambridge, for hospitality during the programme ``Algebraic K-theory, motivic cohomology and motivic homotopy theory'' where the final revisions of this paper were completed.
    C.R. is grateful to Max Planck Institute for Mathematics in Bonn for its hospitality and financial support at the time of writing the paper.
    This paper is part of a project that has received funding from the European Research Council (ERC) under the European Union's Horizon 2020 research and innovation programme (grant agreement No.~832833).

\changelocaltocdepth{2}

\section{Preliminaries on stacks}
\label{sec:convent}

  \ssec{Stacks}
  \label{ssec:larnax}

    We fix a base commutative ring $k$ and denote by $\Stk_k$ the \inftyCat of Artin stacks\footnote{%
      Note that, except in the introduction, all our stacks are implicitly \emph{higher} by default, i.e., they define sheaves of \inftyGrpds.
    } that are locally of finite type over $k$ and have quasi-compact and separated diagonal.
    Given an Artin stack $S \in \Stk_k$, we denote by $\Stk_S$ the \inftyCat of locally of finite type Artin stacks over $S$ with quasi-compact and separated diagonal.

    Let us briefly review the definitions; we refer to \cite[\S 4.2]{GaitsgoryStacks} or \cite[\S 3.1]{ToenSimplicial} for more details.
    A \emph{prestack} is a presheaf of \inftyGrpds on the site of $k$-schemes.
    A \emph{stack} is a prestack that satisfies hyperdescent with respect to the étale topology.

    A stack is \emph{$0$-Artin} if it is an algebraic space, i.e., if it has schematic and monomorphic (= $(-1)$-truncated) diagonal\footnote{%
      hence in particular takes values in sets (= $0$-truncated \inftyGrpds)
    } and admits a surjective étale morphism from a $k$-scheme.
    A stack is \emph{$n$-Artin}, for $n>0$, if it has $(n-1)$-representable diagonal and admits a surjective smooth morphism from a scheme.
    A stack is \emph{Artin} if it is $n$-Artin for some $n$.
    A stack is \emph{Deligne--Mumford} if it has representable (= $0$-representable) diagonal and admits a surjective étale morphism from a scheme, or equivalently if it is $1$-Artin with unramified diagonal.
    
    In the above definitions, a morphism of prestacks $f : X \to Y$ is called \emph{schematic}, resp. \emph{$(n-1)$-representable}, if for every scheme $S$ and every morphism $S \to Y$, the base change $X \fibprod_Y S$ is a scheme, resp. $(n-1)$-Artin.
    An $(n-1)$-representable morphism $f : X \to Y$ is \emph{étale} (resp. \emph{smooth}, \emph{flat}, \emph{surjective}), if for every scheme $S$ and every morphism $S \to Y$, the morphism of $(n-1)$-representable stacks $X \fibprod_Y S \to S$ is étale (resp. smooth, flat, surjective).

  \ssec{Points}

    A \emph{point} of a prestack $X$ is a field-valued point, i.e., a morphism $x : \Spec(k(x)) \to X$ where $k(x)$ is a field (which we call the \emph{residue field} at $x$).
    A \emph{geometric point} of $X$ is a field-valued point $x$ whose residue field $k(x)$ is an algebraic closure of a residue field of $k$ at a prime ideal.
    A morphism of stacks is \emph{surjective} if it is surjective on geometric points.

    The \emph{set of points} of $X$, denoted $\abs{X}$, is the colimit
    \[ \colim_{\kappa} \pi_0 X(\kappa), \]
    taken over fields $\kappa$, in the category of sets.
    Here $\pi_0 X(\kappa)$ is the set of connected components of the \inftyGrpd $X(\kappa)$, and given a field extension $\kappa' \to \kappa$, the corresponding transition arrow is the map induced by $X(\kappa') \to X(\kappa)$ on sets of connected components.
    When $X$ is $1$-Artin, $\abs{X}$ admits a canonical structure of topological space (see e.g. \cite[Tag~04XL]{Stacks}).

  \ssec{Stabilizers}
  \label{ssec:theosophic}

    Let $X$ be a $1$-Artin stack and $x$ a point.
    The \emph{stabilizer} at $x$ is the group algebraic space $\uAut_X(x)$ of automorphisms of $x$.
    This can be defined equivalently as the fibred product $\Spec(k(x)) \fibprod_X \Spec(k(x))$, the fibre of the diagonal $X \to X \times X$ over $(x,x)$, or the fibre of the projection of the inertia stack $I_X \to X$ over $x$.

    We say a $1$-Artin stack $X$ has \emph{affine stabilizers} if for every point $x$ of $X$, the stabilizer $\uAut_X(x)$ is affine.
    If $X$ has affine inertia or affine diagonal, then it has affine stabilizers.
    
    We say that $X$ has \emph{finite stabilizers} if for every point $x$ the stabilizer $\uAut_X(x)$ is finite (over $\Spec(k(x))$).
    When $X$ has quasi-compact diagonal, this is equivalent to the diagonal being moreover quasi-finite.
    If $X$ is Deligne--Mumford or has quasi-finite inertia, then it has finite stabilizers.
  
  \subsection{Stabilizers of group actions}
  \label{ssec:gstab}

    Suppose given an action of an algebraic group $G$ on an Artin stack $X$, in the sense of \cite{RomagnyGroupActions} (for $1$-Artin stacks) or \cite[\S 4.2]{KhanNCTS} (for higher Artin stacks).
    We will define the stabilizer of the action (``$G$-stabilizer'') at any point $x$ of $X$.
    When the stabilizer at $x$ of $X$ itself is trivial, this coincides with the stabilizer of the quotient stack $[X/G]$ at $x$.

    We fix an fppf group algebraic space $G$ over a $k$-scheme $S$.

    \begin{rem}\label{rem:Yana}
      Let $f : X \to Y$ be a morphism of Artin stacks over $S$.
      The relative inertia stack $I_{X/Y}$ is a group Artin stack over $X$ which fits into a cartesian square
      \[\begin{tikzcd}
        I_{X/Y} \ar{r}\ar{d}
        & I_{X/S} \ar{d}
        \\
        X \ar{r}
        & I_{Y/S} \fibprod_Y X
      \end{tikzcd}\]
      of group stacks over $X$.
      The lower horizontal arrow is the base change of the identity section $e : Y \to I_{Y/S}$.
      When $f$ is representable, $I_{X/Y} \to X$ is an isomorphism, i.e., $I_{X/S} \to I_{Y/S} \fibprod_Y X$ is a monomorphism of group stacks.
      See e.g. \cite[Tag~050P]{Stacks}.
    \end{rem}

    \begin{rem}
      Let $X$ be an Artin stack over $S$ with $G$-action and denote by $\sX = [X/G]$ the quotient stack.
      Applying \remref{rem:Yana} to the morphisms $X \twoheadrightarrow \sX$ and $\sX \to BG$, we get the cartesian squares of group stacks over $X$
      \[
        \begin{tikzcd}
          X \ar{r}\ar{d}
          & I_{X/S} \ar{r}\ar{d}
          & X \ar{d}
          \\
          X \ar{r}
          & I_{\sX/S} \fibprod_{\sX} X \ar{r}
          & G \fibprod_S X,
        \end{tikzcd}
      \]
      where $I_{X/\sX} \simeq X$ since $X \twoheadrightarrow \sX$ is representable and the right-hand vertical arrow is the identity section.
      For every $S$-scheme $A$ and every $A$-valued point $x$ of $X$, this gives rise to an exact sequence of group algebraic spaces over $A$
      \begin{equation}\label{eq:evvTWDTvdYQvm}
        1 \to \underline{\Aut}_X(x) \to \underline{\Aut}_{\sX}(x)
        \xrightarrow{\alpha_A} G_A
      \end{equation}
      where $G_A := G \fibprod_S A$ denotes the fibre of $G$ over $x$.
    \end{rem}

    \begin{defn}\label{defn:stabilizer}
      Let $X$ be an Artin stack over $S$ with $G$-action.
      For any scheme $A$ and every $A$-valued point $x$ of $X$, the \emph{$G$-stabilizer} (or \emph{stabilizer of the $G$-action}) at $x$ is an fppf sheaf of groups $\St^G_X(x)$ defined as the cokernel of the homomorphism $\uAut_X(x) \hook \uAut_\sX(x)$.
      Thus we have a short exact sequence
      \begin{equation}\label{eq:stabses}
        1 \to \underline{\Aut}_X(x) \to \underline{\Aut}_{\sX}(x) \to \St^G_X(x) \to 1
      \end{equation}
      of sheaves of groups over $A$.
      Note that $\St^G_X(x)$ can be regarded as a subgroup of $G_{A}$, since it is the image of $\alpha_A : \uAut_{\sX}(x) \to G_{A}$.
    \end{defn}

    \begin{rem}
      For a field-valued point $x : \Spec(k(x)) \to X$, the $G$-stabilizer $\St^G_X(x)$ is a group algebraic space.
      This follows from \cite[Exp.~V, Cor.~10.1.3]{SGA3}, since in this case $\uAut_X(x)$ is flat over $\Spec(k(x))$.
      Since $X$ has separated diagonal, $\St^G_X(x)$ is moreover a group \emph{scheme} by \cite[0B8F]{Stacks}.
    \end{rem}

    \begin{rem}\label{rem:noboh1b1}
      When $X$ has trivial stabilizers (i.e., is an algebraic space), then the $G$-stabilizer $\St^G_X(x)$ at a point $x$ is nothing else than $\uAut_{\sX}(x)$, the stabilizer at $x$ of the quotient stack $\sX = [X/G]$.
    \end{rem}

    \begin{rem}\label{rem:zymogenous}
      Let $X$ be an Artin stack over $S$ with $G$-action.
      Let $A$ be an $S$-scheme and $x$ an $A$-valued point of $X$.
      From the short exact sequence \eqref{eq:stabses} we see that the induced morphism $B\uAut_X(x) \to B\uAut_\sX(x)$ is a $\St^G_X(x)$-torsor, where $\sX = [X/G]$.
      Moreover, there is a commutative diagram
      \[\begin{tikzcd}
        B\uAut_X(x) \ar{r}\ar[twoheadrightarrow]{d}
        & X \ar[twoheadrightarrow]{d}
        \\
        B\uAut_\sX(x) \ar{r}
        & \sX
      \end{tikzcd}\]
      where the right-hand arrow is a $G$-torsor.
      In particular, we find that there is a canonical $\St^G_X(x)$-action on the group algebraic space $\uAut_X(x)$, and the canonical monomorphism
      \[ B\uAut_X(x) \hook X \]
      is equivariant with respect to the $\St^G_X(x)$-action on the source and $G$-action on the target.
    \end{rem}

  \subsection{Fixed loci}
  \label{ssec:fixed}

    Fix an fppf group algebraic space $G$ over a $k$-scheme $S$.
    Let $X$ be an Artin stack over $S$ with $G$-action.

    \begin{defn}\label{defn:oy1bpy1b1}
      We define the \emph{$G$-fixed locus} $X^G \sub X$ as follows.
      An $A$-point $x \in X(A)$, where $A$ is an $S$-scheme, belongs to $X^G$ if and only if the homomorphism of group algebraic spaces over $A$ \eqref{eq:evvTWDTvdYQvm}
      \begin{equation*}
        \alpha_A : \uAut_{[X/G]}(x) \to G_{A}
      \end{equation*}
      is surjective\footnote{%
        i.e., an effective epimorphism of fppf sheaves: it admits a section after base change along some fppf cover $A' \twoheadrightarrow A$
      }.
      In other words, $X^G$ is the locus of points $x \in X(A)$ where the inclusion of the $G$-stabilizer $\St^G_X(x) \sub G_A$ is an equality (see \defnref{defn:stabilizer}).
    \end{defn}

    \begin{defn}\label{defn:redfix}
      Suppose $X$ is $1$-Artin.
      When the subset $\abs{X^G} \sub \abs{X}$ is closed, it follows that there exists a reduced closed substack $X^G_\red$ of $X$ such that $\abs{X^G_\red} = \abs{X^G}$.
      In that case, we refer to $X^G_\red \sub X$ as the \emph{reduced $G$-fixed locus}.
    \end{defn}

    \begin{defn}\label{defn:hfix}
      The \emph{homotopy $G$-fixed point stack} of $X$, denoted $X^{hG}$, is defined as follows.
      An $A$-point of $X^{hG}$, where $A$ is an $S$-scheme, is an $A$-point $x \in X(A)$ together with a \emph{group-theoretic section} of the monomorphism \eqref{eq:evvTWDTvdYQvm}
      $$\alpha_A : \uAut_{[X/G]}(x) \to G_{A}$$
      of group algebraic spaces over $A$.
      The projection $X^{hG} \to X$ factors through the fixed locus $X^G \sub X$ by definition.
    \end{defn}

    We recall the following results from \cite[App.~A]{virloc}:

    \begin{thm}\label{thm:fixed}
      Let $S$ be a $k$-scheme and $X$ a quasi-separated $1$-Artin stack $X$ over $S$ with separated diagonal.
      Let $G \to S$ be an fppf group algebraic space with connected fibres acting on $X$ over $S$.
      Then we have:

      \begin{thmlist}
        \item\label{item:fixed/asp}
        Suppose $X$ is an algebraic space.
        Then there is a canonical isomorphism $X^G \simeq X^{hG}_\cl$ over $X$.
        Moreover, the morphisms $X^G \to X$ and $X^{hG} \to X$ are closed immersions.

        \item\label{item:fixed/redfix}
        Suppose $X$ is locally of finite type over $k$ and $1$-Artin with finite stabilizers.
        Then the subset $\abs{X^G} \sub \abs{X}$ is closed.
        In particular, the reduced $G$-fixed locus $X^G_\red \sub X$ exists.

        \item\label{item:fixed/reparam}
        Suppose $S$ is locally noetherian, $G=T \to S$ is a split torus, and $X$ is of finite type over $k$ and $1$-Artin with affine stabilizers.
        Then there exists an isogeny $\rho : T' \twoheadrightarrow T$ where $T'$ is a split torus over $S$ and a canonical surjection $X^{hT'}_\red \to X^T_\red$ over $X$, where the target is the reduced $T$-fixed locus.
        In particular, the set-theoretic image of $X^{hT'} \to X$ coincides with $\abs{X^T} \sub \abs{X}$.

        \item\label{item:fixed/reparamclosed}
        Suppose $S$ is locally noetherian, $G=T \to S$ is a split torus, and $X$ is locally of finite type over $k$ and Deligne--Mumford with quasi-compact separated diagonal.
        Then the canonical morphism $X^{hT} \to X$ is a closed immersion.
      \end{thmlist}
    \end{thm}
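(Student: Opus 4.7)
The strategy is to prove the four parts sequentially, with (i) and (ii) reducing to standard facts about group actions and (iii) carrying the main technical weight. For (i), I would exploit that $\uAut_X(x) = 1$ (as $X$ is an algebraic space), which forces $\alpha_A$ in \eqref{eq:evvTWDTvdYQvm} to be a monomorphism. Hence the surjectivity condition defining $X^G$ is equivalent to $\alpha_A$ being an isomorphism, in which case $\alpha_A^{-1}$ is the unique group-theoretic section, giving the canonical identification $X^{hG}_\cl \simeq X^G$. For the closed immersion I would identify $X^G$ with the classical equalizer of the action morphism and second projection $G \fibprod_S X \to X$, reducing via an étale presentation to the case where $X$ is a scheme and invoking the separatedness of the diagonal.

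For (ii), the plan is to verify constructibility and closedness under specialization of $\abs{X^G} \sub \abs{X}$. Using an étale presentation $X = [U/H]$ with $H$ finite (available locally because $X$ has finite stabilizers), constructibility is immediate from finite type. For closedness under specialization, I would use connectedness of $G$ together with upper semicontinuity of the stabilizer dimension to conclude that if $\St^G_X(x_0) = G$ and $x_0 \leadsto x_1$ then also $\St^G_X(x_1) = G$; the fact that the kernel $\uAut_X$ in \eqref{eq:stabses} is finite throughout is what makes this compatible with the short exact sequence.

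The main obstacle is (iii). I would stratify $X^T_\red$ according to the isomorphism type of the affine stabilizer $\uAut_X(x)$ together with the extension class of
\[
  1 \to \uAut_X(x) \to \uAut_{[X/T]}(x) \to T \to 1
\]
in an appropriate Ext-group. Since $X$ is of finite type, only finitely many strata occur, and the diagonalizable part of $\uAut_X(x)$ can be controlled uniformly. The obstruction to lifting a geometric point $x \in X^T$ to $X^{hT}$ is precisely the nontriviality of this extension, and it becomes trivial after pulling back along a sufficiently divisible isogeny $T' \twoheadrightarrow T$ whose kernel absorbs the relevant torsion. The delicate point is to produce a single $T'$ that simultaneously trivializes all finitely many extension classes, and then verify that the induced $X^{hT'}_\red \to X^T_\red$ is surjective on geometric points; this requires careful simultaneous control of the stratification and of the diagonalizable quotients of the $\uAut_X(x)$.

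For (iv), the Deligne--Mumford hypothesis makes $\uAut_X(x)$ finite étale, so $\uAut_{[X/T]}(x)$ is an extension of $T$ by a finite étale group. Group-theoretic sections of $\alpha_A$ correspond to splittings of this sequence, and the functor of splittings is a closed subfunctor of the $\uHom$-scheme between the two groups (representable since both are affine of finite type, with the group-homomorphism condition being closed). Combined with (ii) to identify the underlying topological space of $X^{hT}$ with $\abs{X^T}$, this exhibits $X^{hT} \to X$ as a closed immersion.
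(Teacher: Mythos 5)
The paper does not prove this theorem internally; its ``proof'' is a pointer to Prop.~A.21, Prop.~A.12, Cor.~A.42, and Prop.~A.23 of the companion paper \cite{virloc}. So there is no argument here to compare against line by line, only the cited results. With that caveat, your sketch has the right overall shape for (i)--(iii), but contains a clear error in (iv).

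Your treatment of (i) is sound: with $\uAut_X(x)=1$, the map $\alpha_A$ is injective, surjectivity is equivalent to invertibility, and the inverse is the unique group-theoretic section, so $X^G \simeq X^{hG}_\cl$; identifying $X^G$ with the classical equalizer of the action and projection maps then handles the closed-immersion claim. For (ii), the upper-semicontinuity-of-stabilizer-dimension argument (using that $\uAut_X(x)$ is finite so that $\dim\St^G_X(x)=\dim\uAut_{[X/G]}(x)$, and using connectedness of $G$) is the right idea, though you should be careful that equality of dimensions of a closed subgroup scheme only gives topological equality, so you need an additional remark about why $\St^G_X(x)\subseteq G_{k(x)}$ being of full dimension forces equality of fppf group sheaves. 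For (iii), stratifying by stabilizer type and trying to kill extension classes by an isogeny is plausible, but you rely on two unproved finiteness claims (finitely many strata, and a uniform torsion bound on the extension classes) and do not address that $\uAut_X(x)$ is only assumed \emph{affine}, not diagonalizable or finite, so the extension classes do not naively live in an abelian $\mathrm{Ext}^1$ group; this is precisely where the technical content of \cite{virloc} lies.

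The clear error is in your argument for (iv). You write that ``combined with (ii) to identify the underlying topological space of $X^{hT}$ with $\abs{X^T}$'' yields the closed immersion. But $\abs{X^{hT}}$ is \emph{not} equal to $\abs{X^T}$ in general, even for Deligne--Mumford $X$. For instance, take $X = B\mu_n$ with $T=\bG_m$ acting via the central extension $1 \to \mu_n \to \bG_m \xrightarrow{\,n\,} \bG_m \to 1$; then $[X/T] \simeq B\bG_m$, the map $\alpha$ at the unique point is the $n$-th power map $\bG_m \to \bG_m$, which is fppf-surjective but admits no group-theoretic section, so $X^T = X$ while $X^{hT} = \varnothing$. (Of course, $\varnothing \hookrightarrow B\mu_n$ is still a closed immersion, so the statement of (iv) is unharmed, but your proposed way of deducing it is not.) The correct route is to argue directly, without appeal to $X^T$, that the functor of group-theoretic sections is a \emph{closed} subfunctor of $X$; the Hom-scheme construction you invoke is the right tool, but you must then show closure of the image directly, not by importing (ii).
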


    \begin{proof}
      See Prop.~A.21, Prop.~A.12, Cor.~A.42, and Prop.~A.23, respectively, of \cite{virloc}.
    \end{proof}

\section{Intersection theory on stacks}\label{sec:CohIT}

  In this section we briefly summarize the definitions and main properties of motivic Borel--Moore homology of Artin stacks from \cite{KhanVirtual}.

  \subsection{Motivic Borel--Moore homology}
  \label{ssec:BM}

    \subsubsection{Motivic spectra}

      Let $\SH(k)$ denote the stable \inftyCat of motivic spectra over $\Spec(k)$; see e.g. \cite{VoevodskyICM,KhanSix} for definitions.
      We let $\Lambda \in \SH(k)$ be a motivic commutative ring spectrum which is $\Q$-linear and oriented, such as
      \begin{align*}
        \Lambda &= \Q^\mot \qquad &\text{(motivic cohomology)},\\
        \Lambda &= \KGL_\Q \qquad &\text{(algebraic K-theory)},\\
        \Lambda &= \MGL_\Q \qquad &\text{(algebraic cobordism)};
      \end{align*}
      see Examples~2.10, 2.12, and~2.13 of \cite{KhanVirtual}.
      By \cite[Cor.~14.2.16]{CisinskiDegliseBook}, we may (and will) regard $\Lambda$ as a $\Q^\mot$-algebra.

      We consider the twists
      \[ \Lambda\vb{n} := \Lambda(n)[2n] \in \SH(k) \]
      where $\Lambda(n)$ is the Tate twist by $n\in\Z$ (see \cite[Def.~2.4.17]{CisinskiDegliseBook}).

    \subsubsection{Generalized Borel--Moore chains}

      Given an Artin stack $X \in \Stk_k$, we have the complexes of \emph{Borel--Moore chains} with coefficients in $\Lambda$ (see \cite[\S 2.1]{KhanVirtual}):
      \[
        \CBM(X; \Lambda) := \RGamma(X, a_X^!\Lambda),
      \]
      where $a_X : X \to \Spec(k)$ is the structural morphism and $a_X^!$ is the $!$-inverse image functor constructed in \cite[Thm.~A.5]{KhanVirtual}\footnote{%
        or more generally in \cite{KhanWeavelisse}
      }
      We regard this as a \emph{complex}, which here is an abbreviation of ``object of the derived \inftyCat $\D(\Q)$ of chain complexes of $\Q$-vector spaces''.

      We will often omit the coefficient when it is understood, and write
      \[ \CBM(X)\vb{n} := \CBM(X; \Lambda\vb{n}) \]
      for the twists.

      Explicitly, $\CBM(X)$ can be described as a homotopy limit
      \[
        \CBM(X) \simeq \lim_{(U,u)} \CBM(U)\vb{-d_u}
      \]
      over pairs $(U,u)$ where $U\in\Stk_k$ is a scheme and $u : U \to X$ is a smooth morphism of relative dimension $d_u$.

      When $X$ is a global quotient stack $[U/G]$, $\CBM(X)$ can be described as Borel-type $G$-equivariant Borel--Moore chains on $U$; see \cite[Cor.~6.2]{Equilisse}.

    \subsubsection{Operations}

      Each $\CBM(X; \Lambda)\vb{n}$ is a module over the complex of cochains,
      \[ \Ccoh(X; \Lambda) := \RGamma(X, a_X^*\Lambda) \]
      by cap product (see \cite[2.2.6]{KhanVirtual}).

      Given a proper morphism $f : X \to Y$ in $\Stk_k$, there is a push-forward map
      \[
        f_* : \CBM(X) \to \CBM(Y)
      \]
      induced by the counit $f_*f^! \simeq f_!f^! \to \id$ (see \cite[2.2.1]{KhanVirtual}).

      Given a smooth morphism $f : X \to Y$ of relative dimension $d$ in $\Stk_k$, there is a Gysin pull-back map
      \[
        f^! : \CBM(Y) \to \CBM(X)\vb{-d}
      \]
      induced by the cotrace $\id \to f_*f^!\vb{-d}$ (see \cite[2.2.2]{KhanVirtual}).

    \subsubsection{Relative and equivariant versions}

      For a morphism $f : X \to S$ in $\Stk_k$, we consider \emph{relative Borel--Moore chains}: 
      \[
        \CBM(X_{/S}; \Lambda) := \RGamma(X, f^!(\Lambda_S)),
      \]
      where $\Lambda_S = a_S^*(\Lambda)$ for $a_S : S \to \Spec(k)$ the projection.
      As in the absolute case, these admit proper push-forward and smooth Gysin pull-back functoriality in $X$.
      Note that $\CBM(X) = \CBM(X_{/k})$ and $\Ccoh(X) = \CBM(X_{/X})$.

      Let $G$ be an fppf group algebraic space over $k$.
      For every $X \in \Stk_k$ with $G$-action, we write
      \begin{align*}
        \Ccoh_{G}(X) &:= \Ccoh([X/G]),\\
        \Chom^{\BM,G}(X) &:= \CBM([X/G]_{/BG}),
      \end{align*}
      for $G$-equivariant cochains and $G$-equivariant Borel--Moore chains on $X$.
      Since $BG$ is smooth of dimension $-\dim(G)$, we have
      \begin{equation*}
        \Chom^{\BM,G}(X) \simeq \CBM([X/G])\vb{-\dim(G)}.
      \end{equation*}

    \subsubsection{Periodization}

      We consider the commutative $\Lambda$-algebra
      \[ \Lambda\per := \Lambda[\gamma,\gamma^{-1}] \simeq \Lambda \otimes_{\Q^\mot} \Q^\mot[\gamma,\gamma^{-1}] \in \SH(k) \]
      where $\gamma : \Lambda\vb{1} \to \Lambda\per$ is a free invertible generator (cf. \cite[Cor.~14.2.17(3)]{CisinskiDegliseBook}).
      At the level of $\Lambda$-modules, we have $\Lambda\per \simeq \bigoplus_{i\in\Z} \Lambda\vb{i}$.
      Note that $\Lambda \mapsto \Lambda\per$ commutes with $*$-pullback along $X \to \Spec(k)$.

      We also consider the $\Ccoh(X)$-algebras
      \begin{align*}
        \Ccoh(X)\per &:= \Ccoh(X)[\gamma, \gamma^{-1}] := \bigoplus_{i\in\Z} \Ccoh(X)\vb{i},\\
        \quad \Ccohhat(X)\per &:= \Ccoh(X) \llbracket \gamma,\gamma^{-1}\rrbracket := \prod_{i\in\Z}\Ccoh(X)\vb{i}.
      \end{align*}
      There are canonical maps
      \begin{equation}\label{eq:forest}
        \Ccoh(X)\per
        \to \Ccoh(X; \Lambda\per)
        \to \Ccohhat(X)\per.
      \end{equation}

      We consider the periodized Borel--Moore chain complexes
      \begin{align*}
        \CBM(X)\per &:= \CBM(X)[\gamma,\gamma^{-1}],\\
        \Chomhat^\BM(X)\per &:= \CBM(X)\llbracket\gamma,\gamma^{-1}\rrbracket
      \end{align*}
      as modules over $\Ccoh(X)\per$ and $\Ccohhat(X)\per$, respectively.

      For $G$ an fppf group algebraic space over $k$, we define $\Ccoh_G(X)\per$, $\Ccohhat_G(X)\per$, $\Chom^{\BM,G}(X)\per$, and $\Chomhat^{\BM,G}(X)\per$ similarly.

    \subsubsection{Chern character}
    \label{sssec:Chern}
      
      The Chern character induces a canonical isomorphism
      \[
        \KGL_\Q \simeq \Q^\mot\per
      \]
      of $\Q^\mot$-algebras in $\SH(k)$ by \cite[Cor.~14.2.17(3)]{CisinskiDegliseBook}.
      Via the $\Q^\mot\per$-algebra structure on $\Lambda\per$ we obtain a canonical map
      \begin{equation*}
        \ch : \KGL_\Q \to \Lambda\per
      \end{equation*}
      of $\Q^\mot$-algebras.

      For every $X \in \Stk_k$ this yields a canonical map
      \begin{equation*}
        \Ccoh(X; \KGL_\Q)
        \to \Ccoh(X; \Lambda\per)
        \to \Ccohhat(X; \Lambda)\per
      \end{equation*}
      where the second arrow is as in \eqref{eq:forest}.
      Recall from \cite[Ex.~2.13]{KhanVirtual} that $\Ccoh(X; \KGL_\Q)$ is identified with the étale K-theory of $X$ (see \cite[\S 5.2]{KhanKstack}).
      Via the canonical map from K-theory to its étale localization, we thus get a Chern character map
      \begin{equation*}
        \ch : \K(X)
        \to \K^\et(X)_\Q
        \simeq \Ccoh(X; \KGL_\Q)
        \to \Ccohhat(X; \Lambda)\per.
      \end{equation*}
      In particular, K-theory classes $\alpha \in \K(X)$ act on $\Chomhat^{\BM}(X; \Lambda)\per$.

      For example, for any fppf group algebraic space $G$ over $k$, there is a ring homomorphism
      \[
        \Rep(G) \simeq \K_0(BG) \to \pi_0 \Ccohhat(BG; \Lambda)\per
      \]
      where $\Rep(G)$ is the representation ring of $G$.
      When $G$ acts on $X \in \Stk_k$, $\pi_0 \Chomhat^{\BM,G}(X; \Lambda)$ is thus a $\Rep(BG)$-module.

  \subsection{Chow, G-theory, bordism}

    \subsubsection{Chow homology}

      For $\Lambda = \Q^\mot$ the motivic cohomology spectrum, we write
      \[
        \CH_n(X, s) := \pi_s \CBM(X; \Q^\mot\vb{-n})
        \simeq \H^{-2n-s}(X; a_X^!\Q^\mot(-n))
      \]
      for every $X \in \Stk_k$ and $n,s\in \Z$, and $\CH_n(X) := \CH_n(X, 0)$.

      Suppose $X$ is a global quotient stack, e.g. $[U/G]$ where $U$ is an algebraic space of finite type over $k$ and $G$ is a linear algebraic group over $k$ acting on $U$.
      Then $\CH_n(X)$ is identified with the equivariant Chow group $\CH_{n-\dim(G)}^G(U) \otimes \Q$ of \cite{EdidinGraham} by \cite[Cor.~6.5]{Equilisse}.
      In particular, it is identified with Kresch's Chow group \cite{Kresch} in this case.

      Suppose $X$ is a $1$-Artin stack with finite inertia\footnote{%
        When $X$ is separated, this is equivalent to having finite stabilizers.
      }.
      Then $X$ admits a coarse moduli space $\pi : X \to M$ \cite{KeelMori} and the push-forward $\pi_* : \CH_n(X) \to \CH_n(M)$ is an isomorphism by proper codescent (see \cite[Thm.~5.7]{KhanKstack}).
      In particular, $\CH_*(X)$ is identified with Kresch's Chow groups also in this case.

      In fact, for any $1$-Artin stack $X$ there is a canonical map to $\CH_n(X)$ from Kresch's Chow group, specializing to the above isomorphisms.
      It will be shown in \cite{BaePark} that it is invertible as long as $X$ has affine stabilizers (and hence admits a stratification by global quotient stacks, see \cite[Prop.~2.6]{HallRydhGroups}).

    \subsubsection{G-theory}

      For $\Lambda = \KGL_\Q$ the algebraic K-theory spectrum, we write
      \[
        \G^\et_s(X) := \pi_s \CBM(X; \KGL_\Q)
      \]
      for every $X \in \Stk_k$ and $s\in \Z$.
      These are the étale G-theory groups of $X$ (see \cite[Ex.~2.13]{KhanVirtual} and \cite[\S 5.2]{KhanKstack}).
      Recall that we have $\KGL_\Q\vb{n} \simeq \KGL_\Q$ for all $n\in\Z$.
      We have the étale-local Grothendieck--Riemann--Roch isomorphism
      \[ \G^\et_s(X) \simeq \prod_{n\in\Z} \CH_n(X, s) \]
      for all $s$ by \cite[\S 3.5]{KhanVirtual}.

      For $X=[U/G]$ a quotient stack, $\G^\et_s(X)$ is Borel-equivariant G-theory $\G^{\triangleleft,G}_s(U)_\Q$ by \cite[Cor.~7.8]{Equilisse}.
      In particular, $\G^\et_0(X)$ is the \emph{completion} $\G_0^G(U)^\wedge_{I_G}$ of the Grothendieck group of $G$-equivariant coherent sheaves at the augmentation ideal $I_G$.

    \subsubsection{Bordism}

      For $\Lambda = \MGL_\Q$ the algebraic cobordism spectrum, we write
      \[
        \Omega_n(X, s) := \pi_s \CBM(X; \MGL_\Q\vb{-n})
      \]
      for every $X \in \Stk_k$ and $n,s\in \Z$, and $\Omega_n(X) := \Omega_n(X, 0)$.

      When $X = [U/G]$ is a quotient stack and $k$ is of characteristic zero, $\Omega_n(X)$ is identified with the equivariant bordism group $\Omega_{n-\dim(G)}^G(U) \otimes \Q$ of \cite{HellerMalagonLopez} by \cite[Thm.~8.4]{Equilisse}.

    \subsubsection{Betti/\texorpdfstring{$\ell$}{l}-adic Borel--Moore homology}

      The definition of Borel--Moore chains makes sense in any appropriate six functor formalism.
      Specifically, for the purposes of this paper, one can replace $\SH(-)$ by any étale-local oriented topological weave \cite{Weaves}; see also \cite{Concat}.

      For example, over $k = \bC$ we may take $\Lambda = \bQ$ in the derived \inftyCat of abelian groups, and form $\CBM(X; \bQ)$ using the $!$-pullback functor on derived \inftyCats of sheaves of abelian groups.
      See e.g. \cite[\S 1.1]{dimredcoha} for the six functor formalism on Artin stacks in this context.

      Similarly, over a field $k$ in which $\ell$ is invertible, we may form $\CBM(X; \bQ_\ell)$ using the six functor formalism for $\ell$-adic sheaves on Artin stacks \cite{LiuZheng}.
  
  \subsection{Localization triangle}

    \begin{thm}
      Let $S\in\Stk_k$, $X\in\Stk_S$, and $i : Z \to X$, $j : U \to X$ a pair of complementary closed and open immersions.
      Then there is a canonical exact triangle
      \[
        \CBM(Z_{/S})
        \xrightarrow{i_*} \CBM(X_{/S})
        \xrightarrow{j^!} \CBM(U_{/S})
      \]
      in $\D(S)$.
    \end{thm}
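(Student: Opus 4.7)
The plan is to derive the localization triangle formally, by applying the standard recollement for a closed--open decomposition to the sheaf $f_X^! \Lambda_S$ and then pushing forward to $S$.

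Write $f_X : X \to S$, $f_Z := f_X \circ i$, and $f_U := f_X \circ j$ for the three structural morphisms. The first step is to invoke, from the six-functor formalism on Artin stacks developed in \cite{KhanVirtual} (cf.~\cite{Weaves}), the canonical cofiber sequence of endo-functors on motivic sheaves on $X$ attached to the complementary closed--open pair,
\[
  i_! i^! \to \id \to j_* j^*.
\]
Since $i$ is proper one has $i_! \simeq i_*$, and since $j$ is étale one has $j^* \simeq j^!$.

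I would then evaluate this sequence at $F := f_X^! \Lambda_S$. Using the composition laws $i^! f_X^! \simeq (f_X \circ i)^! = f_Z^!$ and $j^* f_X^! \simeq j^! f_X^! \simeq f_U^!$, the triangle rewrites as
\[
  i_* f_Z^! \Lambda_S \to f_X^! \Lambda_S \to j_* f_U^! \Lambda_S.
\]
Applying $R(f_X)_*$ and using $(f_X)_* \circ i_* \simeq (f_Z)_*$ together with $(f_X)_* \circ j_* \simeq (f_U)_*$ delivers the desired triangle
\[
  \CBM(Z_{/S}) \to \CBM(X_{/S}) \to \CBM(U_{/S})
\]
in $\D(S)$. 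The first arrow is induced by the counit $i_! i^! \to \id$, which by the definition of proper push-forward recalled in \ssecref{ssec:BM} is precisely $i_*$. The second arrow is induced by the unit $\id \to j_* j^*$, which for étale $j$ is exactly the cotrace map underlying the smooth Gysin pull-back $j^!$.

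The main obstacle I anticipate is already the first step: ensuring that the recollement $i_! i^! \to \id \to j_* j^*$ is available on motivic sheaves on Artin stacks in the formalism of \cite{KhanVirtual}, not merely on schemes or algebraic spaces. This should follow by smooth descent from a smooth atlas $X_0 \twoheadrightarrow X$ together with its base changes to $Z$ and $U$, using that the six operations on Artin stacks in \cite{KhanVirtual,Weaves} are built from their scheme-theoretic counterparts by Kan extension along smooth covers, under which such localization triangles are preserved. Once this input is granted, every remaining step is routine bookkeeping within the six-functor formalism.
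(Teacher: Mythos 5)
Your argument is essentially identical to the paper's: you invoke the localization cofiber sequence $i_!i^! \to \id \to j_*j^*$ on motivic sheaves, apply it to $f_X^!\Lambda_S$, and push forward to $S$. The "main obstacle" you flag — availability of this recollement for Artin stacks — is exactly the input the paper takes off the shelf as \cite[Thm.~2.18]{KhanVirtual}, so the proposal is correct and complete once that citation is supplied.
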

    \begin{proof}
      By \cite[Thm.~2.18]{KhanVirtual} we have for every motivic sheaf $\sF$ on $X$ the exact triangle
      \[
        i_*i^!(\sF) \to \sF \to j_*j^!(\sF).
      \]
      The claim follows by taking $\sF = f^!(\Lambda_S)$, where $f : X \to S$ is the structural morphism, and applying derived global sections.
    \end{proof}
    
  \subsection{Projective bundle formula}
  \label{ssec:PBF}

    \begin{thm}\label{thm:PBF}
      Let $S\in\Stk_k$ and $X\in\Stk_S$.
      Let $\cE$ be a locally free sheaf on $X$ of rank $r+1$ ($r\ge 0$), and write $\pi : P=\P(\cE) \to X$ for the associated projective bundle.
      Then the maps
      \[
        \pi^!(-) \cap e(\cO(-1))^{\cup i} : \CBM(X_{/S}) \to \CBM(P_{/S})\vb{-r+i},
      \]
      induce a canonical isomorphism of $\Ccoh(X)$-modules
      \begin{equation}\label{eq:0pgub1}
        \bigoplus_{0\le i\le r} \CBM(X_{/S})\vb{r-i} \to \CBM(P_{/S}).
      \end{equation}
    \end{thm}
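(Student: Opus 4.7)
The plan is to reduce the projective bundle formula for relative Borel--Moore chains to a sheaf-level projective bundle formula in the six functor formalism of \cite{KhanVirtual}, and then take derived global sections. Let $f : X \to S$ denote the structural morphism and set $\sF := f^!(\Lambda_S) \in \SH(X)$, so that $\CBM(X_{/S}) \simeq \RGamma(X, \sF)$. Since $(f\pi)^! \simeq \pi^! f^!$ and $\pi$ is proper, we have
\[
\CBM(P_{/S}) \simeq \RGamma(P, \pi^! \sF) \simeq \RGamma(X, \pi_* \pi^! \sF).
\]
It will thus be enough to exhibit a natural equivalence
\[
\Phi : \bigoplus_{0\le i\le r} \sF\vb{r-i} \xrightarrow{\sim} \pi_* \pi^! \sF
\]
in $\SH(X)$, whose $i$-th component is the composite of the cotrace $\sF\vb{-r} \to \pi_*\pi^!\sF$ with cap product by $e(\cO(-1))^{\cup i}$.

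Next I would reduce the construction of $\Phi$ to the case $\sF = \Lambda_X$. Since $\pi$ is smooth of relative dimension $r$, smooth purity identifies $\pi^! \simeq \pi^*\vb{r}$. The projection formula for the smooth proper morphism $\pi$ then gives $\pi_*\pi^*\sF \simeq \sF \otimes_\Lambda \pi_*\Lambda_P$, so $\Phi$ becomes the base change (along $\sF$) of the map
\[
\Phi_0 : \bigoplus_{0\le i\le r} \Lambda_X\vb{-i} \to \pi_*\Lambda_P,
\]
built from powers of $c_1(\cO(-1))$. It therefore suffices to show $\Phi_0$ is an equivalence.

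When $X$ is a scheme, this is the classical projective bundle formula for an oriented commutative ring spectrum $\Lambda$, which is essentially equivalent to the existence of an orientation and is part of the foundational data of $\SH$. To extend to an arbitrary Artin stack $X$, I would use smooth descent. Fix a smooth surjection $u : U \to X$ from a scheme $U$, and form the base change $\pi_U : \P(u^*\cE) \to U$ of $\pi$. By smooth base change for the proper morphism $\pi$ and compatibility of Chern classes with pullback, $u^*(\Phi_0)$ is identified with the analogous map for $\pi_U$, hence an equivalence by the scheme case. Since $\SH$ satisfies smooth descent on $\Stk_k$ by the construction in \cite{KhanVirtual}, smooth pullbacks are jointly conservative, and so $\Phi_0$ itself is an equivalence. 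Applying $\RGamma(X,-)$ yields the desired identification.

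The main obstacle, which is more bookkeeping than a genuine difficulty, will be verifying that under the sheaf-level equivalence $\Phi$ constructed above, the summand indexed by $i$ truly corresponds to the operation $\pi^!(-) \cap e(\cO(-1))^{\cup i}$ acting on Borel--Moore chains. This amounts to tracing through the definitions of the smooth Gysin pull-back, the cap product, and the orientation class in the six functor formalism of \cite{KhanVirtual}, taking care with the twist conventions that enter via the smooth purity identification $\pi^! \simeq \pi^*\vb{r}$.
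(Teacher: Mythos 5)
Your approach is correct but genuinely different from the paper's. You reduce to the sheaf-level projective bundle formula $\pi_*\Lambda_P \simeq \bigoplus_{0\le i\le r}\Lambda_X\langle -i\rangle$ for an oriented spectrum over a \emph{scheme} (a standard theorem in motivic homotopy, e.g.\ Cisinski--D\'eglise), then promote it to stacks by smooth descent in $\SH(-)$ and apply $\RGamma$. The paper instead never invokes the sheaf-level PBF: it works directly with Borel--Moore chains, reduces to schemes via the defining limit $\CBM(X) \simeq \lim_{(T,t)}\CBM(T)\langle -d_t\rangle$, then Zariski-localizes so that $\cE$ is trivial and runs an \emph{inductive} argument on the rank, using the localization triangle for $P_0 = \P(\cE_0) \hookrightarrow \P(\cE_0\oplus\cO)$ with complement the vector bundle $E = \V(\cE)$, homotopy invariance for $E \to X$, and the self-intersection formula $i_*i^!(-) \simeq (-)\cap e(\cO_P(-1))$ to glue the two pieces. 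The paper's proof is thus more elementary and self-contained --- it only uses homotopy invariance, localization, and Chern classes, which is the same toolkit driving the rest of the concentration argument --- whereas your route quotes the scheme-level PBF as a black box and instead leans on smooth purity $\pi^!\simeq\pi^*\langle r\rangle$, the projection formula for the proper map $\pi$, and joint conservativity of smooth pullbacks. Both are valid; yours is shorter if you are willing to cite the foundational input, the paper's is preferable if you want a proof that is internal to the Borel--Moore formalism.

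Two small points on your write-up. First, the cotrace for a smooth morphism $\pi$ of relative dimension $r$ runs $\id \to \pi_*\pi^!\langle -r\rangle$, so the map you want in position $i$ is $\sF\langle r-i\rangle \to \pi_*\pi^!\sF\langle -i\rangle \to \pi_*\pi^!\sF$ (your ``$\sF\langle -r\rangle$'' has the sign of the twist reversed); this is exactly the kind of twist-bookkeeping you flag, but it is worth getting right in the statement of $\Phi$. Second, you should note that the resulting isomorphism is $\Ccoh(X)$-linear because each component is built from $\pi^!$ (which is $\Ccoh(X)$-linear by the projection formula) and cap product by $e(\cO(-1))^{\cup i}$ (which is $\Ccoh(P)$-linear, a fortiori $\Ccoh(X)$-linear), matching the module structure asserted in the theorem.
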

    \begin{proof}
      Since $\CBM(-_{/S})$ satisfies étale descent, the map \eqref{eq:0pgub1} is the limit over smooth morphisms $t : T \to X$ with $T$ a scheme of the analogous maps
      \[
        \bigoplus_{0\le i\le r} \CBM(T_{/S})\vb{r-i-d_t} \to \CBM(T\fibprod_S P_{/S})\vb{-d_t},
      \]
      where $d_t$ is the relative dimension of $t : T \to X$.
      We may therefore assume that $X$ is a scheme.
      By Zariski descent on $X$, we may further assume that $\cE$ admits a trivial direct summand.
      It is therefore enough to show the following:

      \begin{enumerate}
        \item [($\ast$)] If \eqref{eq:0pgub1} is invertible for $\cE = \cE_0$ (locally free of rank $r$), then it is also invertible for $\cE = \cE_0\oplus\cO$.
      \end{enumerate}

      Let $P_0 = \P(\cE_0)$, $P = \P(\cE_0\oplus\cO)$, and $E=\V(\cE) = P \setminus P_0$.
      Consider the following diagram:
      \[
        \begin{tikzcd}
          \bigoplus_{0}^{r-1} \CBM(X_{/S})\vb{r-1-i} \ar{d}\ar{r}
          & \bigoplus_{0}^r \CBM(X_{/S})\vb{r-i} \ar{d}\ar{r}
          & \CBM(X_{/S})\vb{r} \ar{d} \\
          \CBM({P_0}_{/S}) \ar{r}
          & \CBM(P_{/S}) \ar{r}
          & \CBM(E_{/S})
        \end{tikzcd}
      \]
      where the upper row is split exact and the lower horizontal row is the localization triangle.
      The right-hand vertical arrow is Gysin pull-back along the projection $E \to X$ (which is invertible by homotopy invariance), and the left-hand and middle vertical arrows are \eqref{eq:0pgub1} for $\cE = \cE_0$ and $\cE = \cE_0\oplus\cO$, respectively.
      Hence it will suffice to show that this diagram is commutative.

      The inclusion $i : P_0 \hook P$ exhibits $P_0$ as the zero locus on $P$ of the cosection $\pi^*(\cO_P(-1)) \oplus \cO_P \to \cO_P$ (the projection onto the second component), so we have a canonical homotopy
      \[ i_* i^! (-) \simeq (-) \cap e(\cO_P(-1)) \]
      of maps $\CBM(P_{/S}) \to \CBM(P_{/S})\vb{1}$.
      The induced homotopy
      \[
        i_* \pi_0^! (-) \simeq \pi^!(-) \cap e(\cO_P(-1)),
      \]
      where $\pi_0 : P_0 \to X$ and $\pi : P \to X$ are the projections, then gives rise to a homotopy up to which the left-hand square in the above diagram commutes.
      Since $\cO_P(-1)$ on $P$ restricts to the trivial line bundle on $E$, there are canonical homotopies $e(\cO_P(-1))^{\cup i}|_E \simeq 0$ for all $i>0$, whence a homotopy up to which the right-hand square also commutes.
      The claim follows.
    \end{proof}

  \subsection{Reduction to the subtorus}
  \label{ssec:subtorus}

    \begin{thm}\label{thm:Weyl}
      Let $S\in\Stk_k$ and $X\in\Stk_S$.
      Suppose $G=\GL_n$ acts on $X$ and denote by $T \sub G$ the subgroup of diagonal matrices.
      Then the Gysin pull-back along the smooth morphism $p : [X/T] \to [X/G]$
      \[
        p^! : \CBM([X/G]_{/S}) \to \CBM([X/T]_{/S})\vb{-d},
      \]
      where $d=\dim(G/T)$, admits a $\Ccoh([X/G])$-module retract.\footnote{%
        For this first statement, we do not need the assumption that $\Lambda$ is $\Q$-linear.
      }
      Moreover, it induces an isomorphism
      \[
        p^! : \CBM([X/G]_{/S})\per \to \CBM([X/T]_{/S})\per^{hW},
      \]
      where $W=\Sigma_n$ is the Weyl group and $(-)^{hW}$ denotes homotopy invariants with respect to the $W$-action induced by the canonical $W$-action on $[X/T]$.
    \end{thm}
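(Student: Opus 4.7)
The plan is to factor $p$ through the Borel quotient and combine homotopy invariance with the projective bundle formula (\thmref{thm:PBF}). Let $B \subset G = \GL_n$ be the subgroup of upper triangular matrices, with unipotent radical $U$, so that $B \simeq T \ltimes U$. We factor $p = p_B \circ p_T$ as
\[
  [X/T] \xrightarrow{p_T} [X/B] \xrightarrow{p_B} [X/G],
\]
where $p_T$ has fibre $B/T \simeq U$ and $p_B$ has fibre $G/B$ (the full flag variety).

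\textbf{Step 1 (retract).} Since $U$ is split unipotent of dimension $\dim U$, every $U$-torsor is Zariski-locally trivial, so $p_T$ is an affine bundle; by homotopy invariance of motivic Borel--Moore chains, $p_T^!$ is an equivalence (up to the twist $\vb{-\dim U}$). The flag variety $G/B$ for $\GL_n$ is an iterated projective bundle (via the tower of partial parabolics), hence so is $p_B$. Iterating \thmref{thm:PBF} produces a $\Ccoh([X/G])$-linear direct sum decomposition
\[
  \CBM([X/B]_{/S}) \simeq \bigoplus_{w \in W} \CBM([X/G]_{/S})\vb{d_w},
\]
in which $p_B^!$, up to a global shift, is the inclusion of one distinguished summand (the one corresponding to the class $1$ at each stage of the tower). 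Projecting to that summand and then precomposing with $(p_T^!)^{-1}$ yields a $\Ccoh([X/G])$-module retract of $p^!$. This argument is purely formal and uses no $\bQ$-linearity, establishing the first claim.

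\textbf{Step 2 ($W$-invariants).} The Weyl group $W = N_G(T)/T$ acts on $[X/T]$ over $[X/G]$ with trivial action on $[X/G]$, so $p^!$ canonically factors through $\CBM([X/T]_{/S})^{hW}$. Over $\bQ$, where $|W|$ is invertible, the averaging operator $\frac{1}{|W|}\sum_{w \in W} w^*$ identifies homotopy invariants with strict invariants and provides a retract of the canonical inclusion $\CBM([X/T]_{/S})^{hW} \hookrightarrow \CBM([X/T]_{/S})$. It remains to show that $\CBM([X/T]_{/S})_\bQ$ is free of rank $|W|$ over $\CBM([X/G]_{/S})_\bQ$ with $W$ acting through the regular representation; then the trivial isotypic component is a rank-one summand, identified by $p^!$ with $\CBM([X/G]_{/S})_\bQ$. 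This is a relative incarnation of the classical Chevalley--Borel theorem $\H^*(BG; \bQ) \simeq \H^*(BT; \bQ)^W$.

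\textbf{Main obstacle.} Identifying the $W$-action on the decomposition of Step 1 with the regular representation is the technical crux. One subtlety is that $W$ does not descend to an action on $[X/B]$, so the factorization $p = p_B \circ p_T$ is not $W$-equivariant and the decomposition cannot be used verbatim to read off the $W$-action. A natural workaround is to pass to the Schubert-type stratification of $G/T$ itself (which refines the Bruhat decomposition of $G/B$ via the $U$-bundle $G/T \to G/B$) and compute the $W$-action on the Chern roots of the universal line bundles on $BT$, through which the iterated projective bundle formula is expressed. Alternatively, one can prove the absolute case $\CBM(BG)_\bQ \simeq \CBM(BT)^{hW}$ (essentially Chevalley--Borel) and bootstrap to the relative case via smooth base change through $[X/G] \to BG$, using the six-functor formalism of $\SH$.
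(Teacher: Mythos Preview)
Your approach is essentially the same as the paper's: factor $p$ through the Borel, use homotopy invariance for the affine bundle $[X/T]\to[X/B]$, and use that $G/B$ is an iterated projective bundle together with \thmref{thm:PBF} for the map $[X/B]\to[X/G]$. The paper makes one cosmetic choice you do not: it rewrites $p_B$ via the Morita isomorphism $[X/B]\simeq[(X\times G/B)/G]$, so that the retract is simply proper push-forward $f_*$ along the resulting proper morphism $f:[(X\times G/B)/G]\to[X/G]$; this is exactly your ``projection to the distinguished summand,'' since $f_*f^!=\id$ on the top summand of the projective bundle decomposition. For the $W$-invariants statement the paper is even terser than you are, merely pointing to the projective bundle computation in \cite[Ex.~3.3.5]{Fulton} and the fact that over $\bQ$ homotopy invariants compute ordinary invariants on homotopy groups; your discussion of the regular representation and the two suggested workarounds is a fair elaboration of what that reference unpacks to.
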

    \begin{proof}
      Since $[X/T] \to [X/B]$ is an affine bundle, where $B \sub G$ is the Borel subgroup, the Gysin map
      \[ \CBM([X/B]_{/S}) \to \CBM([X/T]_{/S}) \]
      is invertible by homotopy invariance.
      Therefore, it is enough to show that the Gysin map
      \[ \CBM([X/G]_{/S}) \to \CBM([X/B]_{/S}) \]
      admits a retract.
      Under the Morita isomorphism $[X/B] \simeq [(X \times G/B)/G]$ (with $G$ acting on $G/B$ by conjugation), this is identified with the Gysin map
      \[ f^! : \CBM([X/G]_{/S}) \to \CBM([(X\times G/B)/G]_{/S}) \]
      where $f : [(X \times G/B)/G] \to [X/G]$ is the canonical morphism.
      Note that $f$ is proper (since $G/B$ is proper over $k$), so that there is a proper push-forward $f_*$.
      We claim that this provides the desired retract.
      Indeed, since $G/B$ is the scheme of complete flags in $\A^n_X$ over $X$, the projection $f$ factors as a sequence of iterated projective bundles.
      Thus the claim follows from the fact that for a projective bundle $\pi$, the pullback $\pi^!$ admits a retract by the projective bundle formula (\thmref{thm:PBF}), cf. \cite[Ex.~3.3.5]{Fulton}.
      
      For the second statement, recall first that since we are working with $\Q$-linear coefficients, the homotopy groups of the homotopy $W$-invariants can be computed as the ordinary $W$-invariants of the homotopy groups.
      Then, as above, we are reduced to the computation of Borel--Moore chains on flag schemes.
      For this one may follow Grothendieck's computation in the Chow groups (see \cite[\S 3, Thm.~1]{GrothendieckIntersection}) word for word, again using the projective bundle formula (\thmref{thm:PBF}) as input.
    \end{proof}

\section{Concentration}
\label{sec:conc}

  In this section we assume the base ring $k$ is noetherian.
  Let $\Lambda \in \D(k)$ be an object, fixed throughout the section.
  As per our conventions recalled in \ssecref{ssec:BM}, we will leave $\Lambda$ implicit in the notation.

  \ssec{The master theorem}

    Given any set $\Sigma$ of line bundles on an Artin stack $\sX$, our main result is a stabilizer-wise criterion for torsionness of Borel--Moore chains on $\sX$ with respect to the first Chern classes of the line bundles in $\Sigma$.

    \begin{notat}\label{notat:loc}
      Let $\sX\in \Stk_k$ be quasi-compact and let $\Sigma \sub \Pic(\sX)$ be a subset.
      We consider the localizations
      \begin{equation*}
        \Ccoh(\sX)_{\Sigma\dash\loc} := \Ccoh(\sX)\per[c_1(\Sigma)^{-1}]
      \end{equation*}
      and
      \begin{equation*}
        \CBM(\sX)_{\Sigma\dash\loc} := \CBM(\sX)\per[c_1(\Sigma)^{-1}]
        \simeq \CBM(\sX) \otimes_{\Ccoh(\sX)}\Ccoh(\sX)_{\Sigma\dash\loc}.
      \end{equation*}
      We omit $\Sigma$ from the subscript when there is no risk of confusion.
    \end{notat}

    \begin{rem}
      On homotopy groups, we have
      \[ \pi_s \CBM(\sX)_{\Sigma\dash\loc} \simeq \pi_s(\CBM(\sX)\per)[c_1(\Sigma)^{-1}] \]
      by \cite[Prop.~7.2.3.25(2)]{LurieHA}.
      In particular, for $\Lambda = \Q^\mot$ and $s=0$ we recover the localization $\CH_*(\sX)_\loc$ appearing in the statement of \thmref{thm:intro/stacky conc}.
    \end{rem}

    \begin{defn}\label{defn:cond L}
      Let $\sX\in \Stk_k$ and $\Sigma \sub \Pic(\sX)$ a subset.
      We introduce the following condition on $\Sigma$:
      \begin{defnlist}[label={(L)},ref={(L)}]
        \item\label{cond:L}
        For every geometric point $x$ of $\sX$, there exists an invertible sheaf $\cL(x) \in \Sigma$ whose restriction $\cL(x)|_{B\uAut_\sX(x)}$ is trivial.
      \end{defnlist}
    \end{defn}

    \begin{rem}
      If $k$ is a field, we may also consider variants of condition~\ref{cond:L} where we look at $k$-rational points or $\bar{k}$-rational points for an algebraic closure $\bar{k}$ of $k$.
      The distinction between these versions will play no role in our proofs, so we may sometimes abuse language by referring to any of them as ``condition~\ref{cond:L}'' when there is no risk of confusion.
      The same applies for other variants of this condition that we will consider below, such as conditions~\ref{cond:K} and ~\ref{cond:L_G}.
    \end{rem}

    The main result of this section is as follows:

    \begin{thm}\label{thm:conc master}
      Let $S \in \Stk_k$, $\sX\in \Stk_S$ quasi-compact $1$-Artin with affine stabilizers, and $\Sigma \sub \Pic(\sX)$ a subset.
      If $\Sigma$ satisfies condition \itemref{cond:L} for $\sX$, then we have
      \[
        \CBM(\sX_{/S})_{\Sigma\dash\loc} = 0.
      \]
    \end{thm}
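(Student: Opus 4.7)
I would proceed by Noetherian induction on $\sX$: assuming the result for all proper closed substacks inheriting condition~\itemref{cond:L}, it suffices by the localization triangle
\[
  \CBM(\sZ_{/S}) \to \CBM(\sX_{/S}) \to \CBM((\sX\setminus\sZ)_{/S})
\]
(together with the fact that $\Sigma$-localization preserves exact triangles, being a filtered colimit) to produce an open dense substack $\sU \sub \sX$ with $\CBM(\sU_{/S})_{\Sigma\dash\loc} = 0$. Condition~\itemref{cond:L} is automatically inherited by any locally closed substack, since closed and open immersions are monomorphisms and hence identify automorphism groups at points.

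The plan is then to choose $\sU$ of the form $[X/\GL_n]$, using that a quasi-compact $1$-Artin stack with affine stabilizers admits a stratification by such quotients. Applying \thmref{thm:Weyl} to the diagonal torus $T \sub \GL_n$ presents $\CBM([X/\GL_n]_{/S})$ rationally as a retract of $\CBM([X/T]_{/S})$, so vanishing of the latter's $\Sigma$-localization implies vanishing of the former's; condition~\itemref{cond:L} transfers along $[X/T]\to[X/\GL_n]$ because at each point the stabilizer in $[X/T]$ is a subgroup of the one in $[X/\GL_n]$. A further Noetherian induction on $X$, stratifying by $T$-stabilizer type, reduces to a single stratum with constant stabilizer $T' \sub T$; then $T'$ acts trivially on $X$, and splitting $T \simeq T' \times T''$ with $T''$ acting freely I obtain a Morita equivalence $[X/T] \simeq Y \times BT'$ where $Y := X/T''$ is an algebraic space of finite type.

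It remains to treat this trivial-action case. Condition~\itemref{cond:L} supplies some $\cL \in \Sigma$ with $\cL|_{BT'}$ trivial, which forces $\cL$ to be pulled back from $Y$. Then $c_1(\cL) \in \Ccoh(Y\times BT')$ comes from $\Ccoh(Y)$, and since $\Sigma$-localization factors through inverting $c_1(\cL)$, it suffices to show $\CBM(Y_{/S})[c_1(\cL)^{-1}] = 0$. A last Noetherian induction on $Y$ finishes this: line bundles on the spectra of fields are trivial, so spreading the trivializations of $\cL$ at the generic points of $Y$ yields an open dense $V \sub Y$ with $c_1(\cL)|_V = 0$; then $\CBM(V_{/S})[c_1(\cL)^{-1}] = 0$ because one has inverted a zero element, and the complement has strictly smaller dimension and is handled by induction.

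The main obstacle I anticipate is the Morita step: one must arrange the splitting $T \simeq T' \times T''$ cleanly (perhaps after descending $T'$ along an isogeny, in the spirit of the reparametrization in \thmref{thm:fixed}) and verify that the data of $\Sigma$ on $[X/T]$ transports compatibly to $Y \times BT'$. The dévissage by localization triangles is formal once \thmref{thm:Weyl} is in hand; the genuine content lies in that theorem together with the concluding generic-vanishing argument.
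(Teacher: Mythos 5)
Your overall plan matches the paper's proof quite closely: noetherian induction via the localization triangle, reduction to quotient stacks $[X/\GL_n]$, reduction to the maximal torus via \thmref{thm:Weyl} (using that a $\Ccoh$-linear retract survives $\Sigma$-localization), a generic slicing step reducing to a trivial-action situation $\sX \simeq Y \times BT'$, and finishing by observing that condition \itemref{cond:L} forces the chosen $\cL$ to be pulled back from the algebraic space $Y$, whence $c_1(\cL)$ is nilpotent. The concluding noetherian induction you propose on $Y$ (spreading out a generic trivialization of $\cL$) is in effect an inline proof of the nilpotence of $c_1$ of a line bundle on a noetherian scheme, which the paper simply cites from Déglise; both are fine.

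The one genuine gap is precisely where you flag the ``main obstacle.'' You want to stratify $X$ by $T$-stabilizer type, arrive at a stratum with constant stabilizer $T'$, and then split $T \simeq T' \times T''$ with $T''$ acting freely so that $[X/T] \simeq (X/T'') \times BT'$. This group-theoretic splitting does not exist in general: the constant stabilizer $T'$ is only a diagonalizable subgroup of $T$, not necessarily a subtorus, and diagonalizable subgroups of split tori are typically not direct factors (already $\mu_n \subset \Gm$ fails, since $1 \to \mu_n \to \Gm \to \Gm \to 1$ is non-split). Reparametrizing along an isogeny $\tilde T \twoheadrightarrow T$ does not repair this, since the preimage of $\mu_n$ in $\tilde T$ is again finite. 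There is also a second issue you do not address: knowing that every geometric point has $T$-stabilizer exactly $T'$ does not by itself give you that $T'$ acts trivially on the stratum and that $T/T'$-torsor $X \to X/(T/T')$ is trivial, both of which are needed for your Morita identification. The paper sidesteps all of this by invoking Thomason's generic slice theorem (\propref{prop:slice}), which directly produces, for reduced $X$, a nonempty $T$-invariant affine open $U$ and a diagonalizable subgroup $T' \subset T$ with $[U/T] \simeq BT' \times V$; no splitting of $T$ and no a priori stratification by stabilizer type is required. If you replace your splitting argument by an appeal to that slice theorem, your argument becomes essentially the paper's.
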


    \begin{cor}
      Let $\sX \in \Stk_k$ be quasi-compact $1$-Artin with affine stabilizers.
      If $\Sigma$ satisfies condition \itemref{cond:L} for $\sX$, then we have $\Ccoh(\sX)_{\Sigma\dash\loc} = 0$.
    \end{cor}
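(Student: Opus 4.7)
The plan is to deduce the corollary from \thmref{thm:conc master} by a direct specialization, taking $S = \sX$ with the identity morphism. Recall from \ssecref{ssec:BM} that cochains and relative Borel--Moore chains are related by the identity $\Ccoh(\sX) = \CBM(\sX_{/\sX})$ (this is built into the definitions, since $f = \id$ forces $f^! = \id$). Periodizing and localizing commute with this identification, so we get $\Ccoh(\sX)_{\Sigma\dash\loc} = \CBM(\sX_{/\sX})_{\Sigma\dash\loc}$.

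Now I would simply check the hypotheses of \thmref{thm:conc master}. The base $S = \sX$ lies in $\Stk_\sX$ trivially (it \emph{is} an object of $\Stk_k$ by assumption, hence certainly of $\Stk_\sX$ via $\id_\sX$), the stack $\sX$ is quasi-compact $1$-Artin with affine stabilizers by hypothesis, and the subset $\Sigma \sub \Pic(\sX)$ satisfies condition \itemref{cond:L} by assumption. Applying the master theorem then yields $\CBM(\sX_{/\sX})_{\Sigma\dash\loc} = 0$, which is the desired vanishing.

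There is no real obstacle: the content of the corollary is entirely absorbed into \thmref{thm:conc master}, with the relative formalism of Borel--Moore chains precisely designed so that the absolute cochain complex becomes a special case. The only point worth noting is that the condition \itemref{cond:L} is stated for $\sX$ itself rather than for a map to $S$, so no translation of hypotheses is required.
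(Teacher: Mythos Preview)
Your proof is correct and matches the paper's own argument exactly: the paper's proof is the single line ``Take $S=\sX$ in \thmref{thm:conc master}.'' Your additional explanation of why $\Ccoh(\sX) = \CBM(\sX_{/\sX})$ and the verification of hypotheses are fine elaborations of this one-line reduction.
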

    \begin{proof}
      Take $S=\sX$ in \thmref{thm:conc master}.
    \end{proof}

    \begin{cor}\label{cor:conc}
      Let $S\in \Stk_k$, $\sZ,\sX \in \Stk_k$ quasi-compact $1$-Artin with affine stabilizers, and $i : \sZ \to \sX$ a closed immersion over $S$.
      Let $\Sigma \sub \Pic(\sX)$ be a subset satisfying condition \itemref{cond:L} for $\sX\setminus \sZ$.
      Then the direct image map
      \[
        i_* : \CBM(\sZ_{/S})_{\Sigma\dash\loc} \to \CBM(\sX_{/S})_{\Sigma\dash\loc}
      \]
      is invertible.
    \end{cor}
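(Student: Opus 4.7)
The plan is to deduce this corollary directly from the master theorem (\thmref{thm:conc master}) via the localization triangle. Write $\sU = \sX \setminus \sZ$ and let $j : \sU \hook \sX$ denote the open complementary immersion. Since $\sX$ is quasi-compact $1$-Artin with affine stabilizers, so is $\sU$ (openness is preserved and stabilizers of points of $\sU$ agree with those of $\sX$ via $j^*$). Moreover, $\Sigma$ satisfies condition~\itemref{cond:L} for $\sU$ by hypothesis (every geometric point of $\sU = \sX \setminus \sZ$ admits some $\cL(x) \in \Sigma$ whose restriction to $B\uAut_\sX(x) = B\uAut_\sU(x)$ is trivial).

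The first step is to invoke the localization triangle established earlier in this section,
\[
  \CBM(\sZ_{/S}) \xrightarrow{i_*} \CBM(\sX_{/S}) \xrightarrow{j^!} \CBM(\sU_{/S}),
\]
which is an exact triangle of $\Ccoh(\sX)$-modules in $\D(S)$. The second step is to apply the localization functor $(-) \otimes_{\Ccoh(\sX)} \Ccoh(\sX)_{\Sigma\dash\loc}$; because this functor is the base change along a localization of $\Einfty$-ring spectra, it is exact and hence carries the localization triangle to an exact triangle
\[
  \CBM(\sZ_{/S})_{\Sigma\dash\loc} \xrightarrow{i_*} \CBM(\sX_{/S})_{\Sigma\dash\loc} \xrightarrow{j^!} \CBM(\sU_{/S})_{\Sigma\dash\loc}.
\]
Here I use that the $\Ccoh(\sX)$-module structures on $\CBM(\sZ_{/S})$ and $\CBM(\sU_{/S})$ factor through the pullback maps $\Ccoh(\sX) \to \Ccoh(\sZ)$ and $\Ccoh(\sX) \to \Ccoh(\sU)$, so that the two definitions of the localized complex agree (localizing at $c_1(\Sigma)$ on $\sX$ coincides with localizing at $c_1(i^*\Sigma)$ on $\sZ$, and similarly on $\sU$, because first Chern classes commute with pullback).

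The third and decisive step is to apply \thmref{thm:conc master} to $\sU$ over $S$: the hypotheses are met, so the term on the right vanishes,
\[
  \CBM(\sU_{/S})_{\Sigma\dash\loc} = 0.
\]
Combined with the localized triangle, this immediately yields that $i_*$ is invertible, as desired. There is no real obstacle here: once \thmref{thm:conc master} is available, the corollary is a one-line consequence of the exactness of localization applied to the Borel--Moore localization triangle. The only minor point to verify is the identification of the two a priori distinct localizations on $\sZ$ and $\sU$, which is routine.
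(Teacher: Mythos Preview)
Your proposal is correct and follows exactly the paper's approach: the paper's proof is the single line ``Follows by the localization triangle,'' and you have simply spelled out what that entails, including the application of \thmref{thm:conc master} to $\sU$ and the exactness of the $\Sigma$-localization functor.
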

    \begin{proof}
      Follows by the localization triangle.
    \end{proof}

    We will also consider the following variant where $\Sigma$ is a set of K-theory classes.

    \begin{varnt}\label{varnt:cond K}
      Let $\Sigma \sub \K_0(\sX)$ be a set of \emph{K-theory} classes on $\sX$.
      We may consider the following variant of condition~\itemref{cond:L}:
      \begin{defnlist}[label={(K)},ref={(K)}]
        \item\label{cond:K}
        For every geometric point $x$ of $\sX$, we have
        \[\K_0(B\uAut_\sX(x))[\Sigma^{-1}]=0,\]
        where the action of $\Sigma$ is given by inverse image along $B\uAut_\sX(x) \to \sX$.
      \end{defnlist}
    \end{varnt}

    \begin{rem}
      Condition~\ref{cond:K} can be reformulated as follows: for every geometric point $x$ of $\sX$, there exists a K-theory class $\alpha(x) \in \K_0(\sX)$ which belongs to the multiplicative closure of $\Sigma$ such that $\alpha(x)|_{B\uAut_\sX(x)} = 0$.
    \end{rem}

    Recall from \sssecref{sssec:Chern} that there is a canonical action of K-theory on Borel--Moore chains via the Chern character.
    We consider the localizations at $\ch(\Sigma) \sub \pi_0 \Ccohhat(\sX)\per$:
    \begin{align*}
      \Ccohhat(\sX)_{\Sigma\dash\loc} &:= \Ccohhat(\sX)\per[\ch(\Sigma)^{-1}]\\
      \Chomhat^{\BM}(\sX)_{\Sigma\dash\loc} &:= \Chomhat^\BM(\sX)\per[\ch(\Sigma)^{-1}].
    \end{align*}
    We have the following analogue of \thmref{thm:conc master}:

    \begin{thm}\label{thm:conc master rational}
      Let $S\in \Stk_k$, $\sX\in \Stk_S$ quasi-compact $1$-Artin with affine stabilizers, and $\Sigma \sub \K_0(\sX)$ a subset.
      If $\Sigma$ satisfies condition \itemref{cond:K} for $\sX$, then we have
      \[
        \Chomhat^{\BM}(\sX_{/S})_{\Sigma\dash\loc} = 0.
      \]
    \end{thm}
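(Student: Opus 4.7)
The plan is to adapt the argument for the $\Pic$-version of the master concentration theorem (\thmref{thm:conc master}), replacing line bundles by $\K_0$-classes and the first Chern class by the Chern character throughout. The strategy has three steps: (a) stratify $\sX$ into global quotients by $\GL_n$ and reduce to a single such quotient; (b) reduce to a maximal torus via \thmref{thm:Weyl}; (c) stratify further so that the torus action becomes trivial on a scheme, and deduce vanishing directly from condition~\itemref{cond:K}.

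For step (a), I would use that any quasi-compact $1$-Artin stack with affine stabilizers admits a stratification into global quotients of the form $[X_i/\GL_{n_i}]$. Since the functor $(-)[\ch(\Sigma)^{-1}]$ is exact and preserves the localization triangles of Section~\ref{sec:CohIT}, an induction along the stratification reduces the question to the case $\sX = [X/\GL_n]$ with $X$ a scheme; condition~\itemref{cond:K} is inherited by open and closed substacks, since stabilizers of points can only shrink.

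For step (b), I would apply \thmref{thm:Weyl}: Gysin pull-back along $p : [X/T] \to [X/\GL_n]$ admits a $\Ccoh([X/\GL_n])$-linear retract, so after $\ch(\Sigma)$-localization the source of $p^!$ is a retract of the target, whence vanishing on $[X/T]$ implies vanishing on $[X/\GL_n]$. Condition~\itemref{cond:K} passes through $p^* : \K_0([X/\GL_n]) \to \K_0([X/T])$ because for any geometric point $y$ of $[X/T]$ with image $\bar y$, the monomorphism $\uAut_{[X/T]}(y) \hook \uAut_{[X/\GL_n]}(\bar y)$ ensures that a trivialization of $\alpha(\bar y)$ on $B\uAut_{[X/\GL_n]}(\bar y)$ pulls back to one on $B\uAut_{[X/T]}(y)$. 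For step (c), I would stratify $X$ into $T$-invariant locally closed subschemes on which the $T$-stabilizer subgroup $\St^T_X(-) \sub T$ is a constant subtorus $H \sub T$; the localization triangle reduces to a single such stratum, which after Morita equivalence in the $T/H$-direction has the form $\sX = Y \times BH$ with $H$ diagonalizable acting trivially on a scheme $Y$. In this base case, condition~\itemref{cond:K} supplies a class $\alpha$ in the multiplicative closure of $\Sigma$ whose restriction to $BH$ vanishes in $\K_0(BH)$; consequently $\ch(\alpha) \in \pi_0 \Ccohhat(\sX)\per$ lies in the kernel of the fibre restriction to $\pi_0 \Ccohhat(BH)\per$. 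Under a K\"unneth-type identification for $\sX = Y \times BH$, this kernel is carried by positive-codimension classes on the noetherian scheme $Y$ and so consists of nilpotent elements; inverting the nilpotent $\ch(\alpha)$ then annihilates the $\Ccohhat(\sX)\per$-module $\Chomhat^{\BM}(\sX_{/S})\per$, giving the required vanishing.

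The main obstacle I anticipate is precisely this last verification: establishing that the pointwise condition $\alpha|_{BH} = 0$ in $\K_0(BH)$ forces $\ch(\alpha)$ to act nilpotently on $\Chomhat^{\BM}(\sX_{/S})\per$. This requires some care with the interaction of completion, periodization, and $\ch(\Sigma)$-localization, and with the K\"unneth identification of $\Ccohhat(Y \times BH)\per$; every other step is a direct K-theoretic translation of the $\Pic$-version argument and should go through with only cosmetic changes.
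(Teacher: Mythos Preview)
Your overall strategy matches the paper's: the reductions in steps (a), (b), and the first half of (c) are exactly what the paper does (invoking \propref{prop:slice} for the generic slice rather than phrasing it as a stratification by stabilizer type, but this is the same content). The only substantive difference is the handling of the base case $\sX \simeq X \times BG$ with $G$ diagonalizable.

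Here the paper avoids your anticipated obstacle entirely by working in $\K_0$ rather than in cohomology. Using the K\"unneth decomposition $\K_0(X \times BG) \simeq \K_0(X) \otimes_{\bZ} \bZ[N]$ (where $N$ is the character group of $G$), one writes $\alpha = \sum_i a_i \otimes b_i$ with the $b_i$ pairwise distinct characters; the vanishing of $\alpha|_{BG_{k(x)}}$ forces $\rk(a_i) = 0$ for each $i$, whence each $a_i$ is nilpotent in $\K_0(X)$ by \cite[Exp.~VI, Prop.~6.1]{SGA6}, and so $\alpha$ itself is nilpotent in $\K_0(\sX)$. Since $\ch$ is a ring homomorphism, $\ch(\alpha)$ is then automatically nilpotent in $\pi_0\Ccohhat(\sX)\per$, and no delicate interaction between completion, periodization, and K\"unneth for cohomology is needed. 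Your proposed route through a cohomological K\"unneth would likely work but is strictly harder; the cleaner move is to establish nilpotence of $\alpha$ itself, before applying $\ch$.

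One minor correction: in step (c) the generic stabilizer $H \sub T$ need not be a subtorus, only a diagonalizable subgroup; this does not affect the argument.
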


  \ssec{Generic slices}

    The proof of \thmref{thm:conc master} will require a useful result of Thomason (see \cite[Thm.~4.10, Rem.~4.11]{ThomasonComp}) which we reformulate here in stacky terms for the reader's convenience.

    \begin{prop}\label{prop:slice}
      Let $T$ be a diagonalizable torus of finite type over a noetherian scheme $S$.
      Let $X$ be a reduced algebraic space of finite type over $S$ with $T$-action.
      Then there exists a $T$-invariant nonempty affine open subspace $U \sub X$ such that there is a canonical isomorphism
      \[
        [U/T] \simeq BT' \fibprod_S V
      \]
      of stacks over $BT$, for some subgroup $T' \sub T$ and affine scheme $V$ over $S$.
    \end{prop}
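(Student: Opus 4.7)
The plan is to follow Thomason's strategy, cited in the statement. The first step is to reduce to the case where $X = \Spec(A)$ is affine and integral: since $T$ is diagonalizable and hence linearly reductive, a version of Sumihiro's theorem (extended to algebraic spaces with diagonalizable action) supplies a $T$-invariant affine open neighborhood of any point; as $X$ has only finitely many irreducible components, whose $T$-orbits are finite, we may shrink further to a $T$-invariant affine open contained in a single component and so assume $A$ is an integral domain.

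The $T$-action on $A$ corresponds to an $M$-grading $A = \bigoplus_{\chi \in M} A_\chi$ by the character group $M$ of $T$. Choose finitely many homogeneous $\cO_S$-algebra generators $a_1,\dots,a_n \in A$ of respective weights $\chi_1,\dots,\chi_n$, and let $N \sub M$ be the subgroup they generate. Every weight appearing in $A$ then lies in $N$, so the subgroup $T' := \bigcap_{\chi \in N}\ker(\chi) \sub T$ acts trivially on $A$, and by Cartier duality the character group of $T'' := T/T'$ is exactly $N$. Since $A$ is a domain, the product $f := a_1 \cdots a_n$ is a non-zero-divisor, and in the localization $A_f$ every character $\lambda = \sum m_i \chi_i \in N$ is represented by a unit of weight $\lambda$, namely $\prod a_i^{m_i}$.

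Choosing one such unit in each weight produces an isomorphism of $T$-equivariant $\cO_S$-algebras $A_f \simeq (A_f)^0 \otimes_{\cO_S} \cO_S[N] \simeq (A_f)^0 \otimes_{\cO_S} \cO(T'')$, and setting $U := \Spec(A_f)$ and $V := \Spec((A_f)^0)$ this yields a $T$-equivariant isomorphism $U \simeq V \fibprod_S T''$, where $T$ acts trivially on $V$ and by translation through $T \twoheadrightarrow T''$ on the second factor. To identify $[U/T]$, one unwinds the moduli: an $R$-point consists of a $T$-torsor $P$ on $\Spec(R)$ together with a $T$-equivariant morphism $P \to V \fibprod_S T''$; the $V$-component is $T$-invariant and so gives a map $\Spec(R) \to V$, while the $T''$-component amounts to a trivialisation of the $T''$-torsor $P/T'$, which in turn presents $P$ as the $T$-torsor induced from a $T'$-torsor on $\Spec(R)$. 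These data assemble into an $R$-point of $V \fibprod_S BT'$, giving the canonical isomorphism $[U/T] \simeq V \fibprod_S BT'$ of stacks over $BT$.

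The most delicate step is the first one, the production of a $T$-invariant affine open: over a field this is Sumihiro's theorem, while for a general noetherian base $S$ one either invokes the equivariant étale slice theorems for linearly reductive groups (as in the work of Alper--Hall--Rydh) or argues fibrewise. Once this reduction is in place, the remainder of the argument is elementary graded commutative algebra together with a formal manipulation of torsors.
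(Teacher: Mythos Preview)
Your graded-algebra argument (paragraphs two and three) is exactly Thomason's proof of \cite[Thm.~4.10]{ThomasonComp}, and is correct; the paper simply cites Thomason for this rather than reproducing it. So the only substantive difference is in how you reduce to the affine case.

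Here the paper takes a shorter and more robust route than you do. Rather than invoking a Sumihiro-type theorem for algebraic spaces, it observes that the schematic locus $X_0 \sub X$ is a nonempty open (\cite[Tags~03JH,~03JG]{Stacks}) and is automatically $T$-invariant: each $t \in T(k)$ acts by an automorphism of $X$, so $t(X_0)$ is a scheme and hence contained in the maximal open subscheme $X_0$. This reduces to $X$ a scheme, after which Thomason's own \cite[Lem.~4.3]{ThomasonComp} provides the $T$-invariant affine open.

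Your proposed alternatives for this step are not solid as stated. Sumihiro's theorem is for normal varieties over a field, not reduced algebraic spaces over a noetherian base. The Alper--Hall--Rydh structure theorems produce \emph{étale}-local quotient presentations of $[X/T]$, not $T$-invariant Zariski-affine opens of $X$, so an extra argument would be needed to extract what you want. And ``argues fibrewise'' is not a proof. None of this is fatal---the paper's footnote even remarks that Thomason's Lemma~4.3 argument goes through for algebraic spaces directly---but the schematic-locus trick is a two-line replacement for all of it and is worth internalising.
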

    \begin{proof}
      Recall that the schematic locus $X_0 \sub X$ is a nonempty open (see \cite[Tags~03JH, 03JG]{Stacks}).
      We claim that $X_0$ is $T$-invariant, i.e., the action morphism $T \fibprod_S X_0 \to X$ factors through $X_0$.
      In other words, the open immersion $T \fibprod_S X_0 \fibprod_X X_0 \to T \fibprod_S X_0$ (base change of the inclusion $X_0 \to X$) is invertible.
      We may assume that $S$ is the spectrum of an algebraically closed field $k$, and it will suffice to check this on $k$-points.
      Let $t : \Spec(k) \to T$ be a $k$-point.
      Then $t$ defines an isomorphism $t : X \to X$.
      Then $t(X_0) \sub X$ is a scheme and hence $t(X_0) \sub X_0$ (since $X_0$ is the largest open which is a scheme).
      Since $t$ was arbitrary, this shows that $X_0$ is $T$-invariant.
 
      Replacing $X$ by $X_0$, we may assume that $X$ is a scheme.
      Now the claim follows by combining \cite[Lem.~4.3]{ThomasonComp}\footnote{%
        One could skip the reductions above by simply observing that the argument of \emph{loc. cit.} goes through without the locally separatedness assumptions.
      } and \cite[Thm.~4.10, Rem.~4.11]{ThomasonComp}.
    \end{proof}

  \ssec{Proof of \thmref{thm:conc master}}
  \label{ssec:musical}

    Let the notation be as in \thmref{thm:conc master}.
    By nil-invariance of Borel--Moore homology, we may replace all stacks by their reductions.
    Since $\sX$ has affine stabilizers, it admits a stratification by global quotient stacks (see \cite[Prop.~2.6]{HallRydhGroups}), so again using the localization triangle we may assume that $\sX=[X/G]$, where $X$ is a reduced quasi-affine scheme of finite type over $k$ and $G=\GL_n$, $n\ge 0$.
    Using \thmref{thm:Weyl}, we may replace $G$ by its maximal subtorus $\bG_m^{\times n}$ and thereby assume $G$ is a split torus.
    (Note that the condition on $\Sigma$ is clearly preserved under this change of notation.)
    
    By \propref{prop:slice}, there is a nonempty $G$-invariant affine open $U$ of $X$ and a diagonalizable subgroup $H$ of $G$ such that $[U/G] \simeq Y \times BH$ with $Y$ an affine scheme.
    By noetherian induction and the localization triangle, we may replace $X$ by $U$, and then further replace $X$ by $Y$ and $G$ by $H$ so that $\sX = X\times BG$.
    (Again, note that the condition on $\Sigma$ is preserved.)

    We can assume $X$ is nonempty, so that by condition~\itemref{cond:L} there exists a geometric point $x$ of $X$ and an invertible sheaf $\cL \in \Pic(X)$ such that $\cL|_{BG_{k(x)}}$ is trivial.
    It is enough to show that $c = c_1(\cL)$ is nilpotent as an element of the ring $\pi_0 \Ccoh(\sX)\per$.
    
    Since $G$ is diagonalizable, we may write $\cL = \cL_m|_{\sX} \otimes m\cO_{BG}|_{\sX}$ where $m$ is a character of $G$ and $\cL_m$ is an invertible sheaf on $X$ (see \cite[Exp.~I, 4.7.3]{SGA3}).
    The equality $c|_{BG_{k(x)}} = 0$ thus implies that the invertible sheaf $\cL|_{BG_{k(x)}}$ is trivial, i.e., $m=1$ and $\cL \simeq \cL_1|_\sX$.
    Thus $c = c_1(\cL_1)|_\sX$.
    Since $X$ is a scheme, $c_1(\cL_m)$ is nilpotent (see e.g. \cite[Prop.~2.1.22(1)]{DegliseOrientation}), hence so is $c$.

    \begin{rem}
      One can always find a \emph{finite} subset $\Sigma_0 \sub \Sigma$ such that the conclusion of \thmref{thm:conc master} still holds.
      Indeed, note that after the reductions we only need a \emph{single} element of $\Sigma$, and in each reduction step we only need to use finitely many more elements.
    \end{rem}

  \ssec{Proof of \thmref{thm:conc master rational}}

    After the same reductions as in the proof of \thmref{thm:conc master}, we reduce to the case where $\sX = X\times BG$ with $X$ a (nonempty) reduced quasi-affine scheme and $G$ a diagonalizable group scheme.
    By condition~\itemref{cond:K}, there exists a geometric point $x$ of $\sX$ and a class $\alpha \in \Sigma$ with $\alpha|_{BG_{k(x)}}$ trivial, where $G_{k(x)} \simeq \uAut_\sX(x)$ is the base change of $G$ to the residue field $k(x)$.
    It is enough to show that $\alpha$ is nilpotent as an element of $\K_0(\sX)$.
    If $N$ denotes the group of characters of $G$, then we have $\K_0(\sX) \simeq \K_0(X) \otimes_\bZ \bZ[N]$ (see \cite[Exp.~I, 4.7.3]{SGA3}, as in \cite[Lem~4.14]{ThomasonComp}).
    Thus we may write $\alpha = \sum_i a_i \otimes b_i$, where $a_i \in \K_0(X)$ and $b_i$ are pairwise distinct elements of $N$.
    Restricting $\alpha$ along $BG_{k(x)} \to \sX$ and identifying $\K_0(BG_{k(x)}) \simeq \bZ[N]$, we get $\sum_i \rk(a_i) \cdot b_i = 0$ in $\bZ[N]$, and hence $\rk(a_i) = 0$ for all $i$.
    Thus by \cite[Exp.~VI, Prop.~6.1]{SGA6}, $a_i$ are all nilpotent as elements of $\K_0(X)$.
    In particular, $\alpha$ is also nilpotent as claimed.

  \ssec{The equivariant master theorem}
  \label{ssec:gmast}
  
    We formulate a $G$-equivariant version of condition~\itemref{cond:L}:

    \begin{defn}\label{defn:cond L_G}
      Let $G$ be an fppf group algebraic space over $k$ acting on $X \in \Stk_k$, and $\Sigma \sub \Pic(BG)$ a subset.
      We introduce the following condition on $\Sigma$:
      \begin{defnlist}[label={($\text{L}_G$)},ref={($\text{L}_G$)}]
        \item\label{cond:L_G}
        For every geometric point of $X$ there exists a rank one $G$-representation $\cL(x) \in \Sigma$ such that the $\St^G_X(x)$-representation $\cL(x)|_{B\St^G_X(x)}$ is trivial.
      \end{defnlist}
      Here $\St^G_X(x)$ is the $G$-stabilizer at the point $x$ (see \defnref{defn:stabilizer}).
    \end{defn}

    Specializing \thmref{thm:conc master} to the case of quotient stacks yields the following equivariant version of the master theorem.
    As in \notatref{notat:loc}, $(-)_{\Sigma\dash\loc}$ denotes $(-)\per[c_1(\Sigma^{-1})]$.

    \begin{cor}\label{cor:conc master equiv}
      Let $S\in \Stk_k$ and $G$ an fppf group algebraic space over $k$ acting on a $1$-Artin $X \in \Stk_S$ with affine stabilizers.
      Let $\Sigma \sub \Pic(BG)$ be a subset which satisfies condition~\itemref{cond:L_G} for $X$.
      Then we have
      \[
        \CBM([X/G]_{/S})_{\Sigma\dash\loc} = 0
      \]
      and in particular $\Chom^{\BM,G}(X)_{\Sigma\dash\loc} = 0$.
    \end{cor}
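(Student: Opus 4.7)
The plan is to deduce this from \thmref{thm:conc master} applied to the quotient stack $\sX := [X/G] \in \Stk_S$, taking $\Sigma' \sub \Pic(\sX)$ to be the pullback of $\Sigma$ along the canonical morphism $\pi : \sX \to BG$. Granted this reduction, the vanishing $\CBM([X/G]_{/S})_{\Sigma\dash\loc} = 0$ follows immediately, and the ``in particular'' assertion is obtained by rerunning the argument with base $BG$ in place of $S$ (using the canonical map $\sX \to BG$), since $\Chom^{\BM,G}(X) = \CBM(\sX_{/BG})$ by definition.

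The core of the argument is verifying condition~\itemref{cond:L} for $\Sigma'$ on $\sX$. Given a geometric point $x$ of $\sX$, I would first lift it along the fppf surjection $X \twoheadrightarrow \sX$ (possibly enlarging the residue field, which causes no harm by the flexibility noted after \defnref{defn:cond L}) to obtain a geometric point $\tilde{x}$ of $X$. Condition~\itemref{cond:L_G} then furnishes an $\cL(\tilde{x}) \in \Sigma$ whose restriction to $B\St^G_X(\tilde{x})$ is trivial, and $\pi^* \cL(\tilde{x}) \in \Sigma'$ will be the desired witness for $x$.

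The main step, and really the only nontrivial one, is to check that $\pi^*\cL(\tilde{x})|_{B\uAut_\sX(x)}$ is trivial. This will follow by observing that the composition $B\uAut_\sX(x) \hook \sX \xrightarrow{\pi} BG$ factors as $B\uAut_\sX(x) \to B\St^G_X(\tilde{x}) \hook BG$, with the first map induced by the surjection $\uAut_\sX(x) \twoheadrightarrow \St^G_X(\tilde{x})$ from the short exact sequence of \defnref{defn:stabilizer} (cf. \remref{rem:zymogenous}, which identifies $\pi$ at the point $x$ with the homomorphism $\alpha : \uAut_\sX(x) \to G_{k(x)}$, whose image is exactly $\St^G_X(\tilde{x})$). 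Hence $\pi^*\cL(\tilde{x})|_{B\uAut_\sX(x)}$ is the further pullback of the trivial bundle $\cL(\tilde{x})|_{B\St^G_X(\tilde{x})}$, and so trivial. Once condition~\itemref{cond:L} is in hand, \thmref{thm:conc master} concludes the proof with no further work.
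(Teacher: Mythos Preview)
Your proof is correct and follows essentially the same route as the paper's: apply \thmref{thm:conc master} to $\sX = [X/G]$ with $\Sigma$ pulled back along $\sX \to BG$, and verify condition~\itemref{cond:L} via the factorization of $B\uAut_\sX(x) \to BG$ through $B\St^G_X(x)$. The paper expresses this factorization as a commutative square rather than spelling out the lift of the geometric point, but the content is identical.
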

    \begin{proof}
      Apply \thmref{thm:conc master} to the quotient stack $\sX=[X/G]$ (with $\Sigma \sub \Pic(\sX)$ the inverse image of $\Sigma$ along $\sX = [X/G] \to BG$).
      Since there is a commutative square
      \[\begin{tikzcd}
        B\uAut_{\sX}(x) \ar{r}\ar{d}
        & B\St^G_X(x) \ar{d}
        \\
        \sX \ar{r}
        & BG,
      \end{tikzcd}\]
      triviality of a line bundle over $B\St^G_X(x)$ implies triviality over $B\uAut_{\sX}(x)$.
    \end{proof}

    \begin{cor}\label{cor:equiv conc}
      Let $G$ be an fppf group algebraic space over $k$.
      Let $S\in \Stk_k$ and $i : Z \to X$ a $G$-equivariant closed immersion over $S$ where $Z,X\in \Stk_S$ are quasi-compact $1$-Artin with affine stabilizers.
      Let $\Sigma \sub \Pic(BG)$ be a subset which satisfies condition~\itemref{cond:L_G} for $X\setminus Z$.
      Then the direct image map
      \[
        i_* : \CBM({[Z/G]}_{/S})_{\Sigma\dash\loc} \to \CBM({[X/G]}_{/S})_{\Sigma\dash\loc}
      \]
      is invertible.
    \end{cor}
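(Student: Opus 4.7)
The plan is to reduce this corollary directly to the vanishing statement of \corref{cor:conc master equiv} via the localization triangle, in exactly the same way that \corref{cor:conc} is deduced from \thmref{thm:conc master}. First I would let $j : U \hookrightarrow X$ denote the open immersion complementary to $i$, where $U = X \setminus Z$. Since the $G$-action on $X$ restricts to $U$, passing to quotients produces complementary closed and open immersions $[Z/G] \hookrightarrow [X/G]$ and $[U/G] \hookrightarrow [X/G]$ of Artin stacks over $S$.

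Next I would apply the localization triangle established earlier in this section to obtain the exact triangle
$$\CBM([Z/G]_{/S}) \xrightarrow{i_*} \CBM([X/G]_{/S}) \to \CBM([U/G]_{/S})$$
in $\D(S)$. The localization $(-)_{\Sigma\dash\loc}$ is computed by tensoring over $\Ccoh([X/G])$ with the algebra $\Ccoh([X/G])_{\Sigma\dash\loc}$, and is therefore exact, so it preserves this triangle. Hence it suffices to show that $\CBM([U/G]_{/S})_{\Sigma\dash\loc} = 0$, for then the two-out-of-three property will force $i_*$ to be invertible after $\Sigma$-localization.

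For that vanishing I would invoke \corref{cor:conc master equiv} applied to the $G$-action on $U$. The stack $U$ inherits from $X$ the property of being $1$-Artin with affine stabilizers, and it is quasi-compact because $X$ is of finite type over the noetherian ring $k$, so its underlying topological space is noetherian and every open subspace is quasi-compact. By hypothesis $\Sigma \subset \Pic(BG)$ satisfies condition~\itemref{cond:L_G} precisely on $U = X \setminus Z$, so the assumptions of \corref{cor:conc master equiv} are all met. I do not anticipate any substantive obstacle here: the argument is a straightforward repackaging of the equivariant master vanishing theorem through the localization triangle, with the only minor point requiring attention being the verification that the complement $U$ itself remains quasi-compact.
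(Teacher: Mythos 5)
Your proposal is correct and follows the same route the paper intends: the paper states this corollary without explicit proof precisely because, as you observe, it is the verbatim analogue of how \corref{cor:conc} follows from \thmref{thm:conc master}, replaying the localization triangle against \corref{cor:conc master equiv}. Your verification that $U = X \setminus Z$ inherits quasi-compactness (since $X$ is quasi-compact, quasi-separated, and locally of finite type over the noetherian ring $k$, hence noetherian) and affine stabilizers is exactly the point worth making explicit.
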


    We similarly have an equivariant version of conditions~\itemref{cond:K}:

    \begin{varnt}
      Let $G$ act on $X\in \Stk_k$ and $\Sigma \sub \Rep(G) \simeq \K_0(BG)$ a subset.
      \begin{defnlist}[label={($\text{K}_G$)},ref={($\text{K}_G$)}]
        \item\label{cond:K_G}
        For every geometric point $x$ of $X$ we have
        \[\K_0(B\St^G_X(x))[\Sigma^{-1}]=0.\]
      \end{defnlist}
    \end{varnt}

    For $\Lambda$ rational, we may then specialize \thmref{thm:conc master rational} to the equivariant case as above.

  \ssec{Base change}

    \begin{lem}
      Let $\sX,\sX'\in\Stk_k$ be $1$-Artin and let $f : \sX' \to \sX$ be a morphism.
      Let $\Sigma \sub \Pic(\sX)$ be a subset and denote by $\Sigma' \sub \pi_0\Ccoh(\sX')\per$ its image by $f^*$.
      If $\Sigma$ satisfies condition~\itemref{cond:L} for $\sX$, then $\Sigma'$ satisfies condition~\itemref{cond:L} for $\sX'$.
    \end{lem}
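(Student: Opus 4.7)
The plan is to trace a geometric point of $\sX'$ back to $\sX$ via $f$ and use the induced map on automorphism groups to transport the hypothesis. Concretely, fix a geometric point $x' : \Spec(k(x')) \to \sX'$. Then $x := f \circ x' : \Spec(k(x')) \to \sX$ is again a geometric point (the residue field is unchanged), and $f$ induces a homomorphism of group algebraic spaces $\uAut_{\sX'}(x') \to \uAut_\sX(x)$, hence a morphism $B\uAut_{\sX'}(x') \to B\uAut_\sX(x)$. By construction this fits into a commutative square
\[
  \begin{tikzcd}
    B\uAut_{\sX'}(x') \ar{r}\ar{d} & \sX' \ar{d}{f} \\
    B\uAut_\sX(x) \ar{r} & \sX.
  \end{tikzcd}
\]

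Now, by condition~\itemref{cond:L} for $\sX$, we can choose $\cL = \cL(x) \in \Sigma$ such that $\cL|_{B\uAut_\sX(x)}$ is trivial. Set $\cL(x') := f^*\cL \in \Sigma'$. Commutativity of the square above yields a canonical isomorphism
\[
  \cL(x')\big|_{B\uAut_{\sX'}(x')}
  \;\simeq\; \bigl(\cL|_{B\uAut_\sX(x)}\bigr)\big|_{B\uAut_{\sX'}(x')},
\]
and the right-hand side is trivial by the choice of $\cL$. Since $x'$ was arbitrary, $\Sigma'$ satisfies \itemref{cond:L} for $\sX'$. There is no real obstacle here; the only thing to verify is the existence of the factorization through $B\uAut_\sX(x)$, which is immediate from the functoriality of the inertia stack (\emph{cf.}~\remref{rem:Yana}).
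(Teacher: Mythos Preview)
Your argument is correct and is essentially identical to the paper's own proof: both pick a geometric point $x'$ of $\sX'$, push it forward to $x = f(x')$ in $\sX$, use the induced homomorphism $\uAut_{\sX'}(x') \to \uAut_\sX(x)$ and the resulting commutative square to pull back the witness line bundle $\cL(x) \in \Sigma$ to $f^*\cL(x) \in \Sigma'$, whose restriction to $B\uAut_{\sX'}(x')$ is then trivial. Your write-up is in fact slightly more explicit about the commutative square and the factorization through classifying stacks.
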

    \begin{proof}
      Let $x'$ be a geometric point of $\sX'$ and consider its image $x = f(x)$ in $\sX$.
      By assumption, there exists an invertible sheaf $\cL(x) \in \Sigma$ with $c_1(\cL(x))|_{B\uAut_\sX(x)} = 0$.
      Then its inverse image $\cL(x') := f^*\cL(x)$ belongs to $\Sigma'$.
      Since there is a morphism of group schemes $\uAut_{\sX'}(x') \to \uAut_\sX(x)$, we also have $c_1(\cL(x'))|_{B\uAut_{\sX'}(x')} = 0$.
    \end{proof}

    \begin{cor}\label{cor:uab01yby}
      Let $G$ be an fppf group algebraic space over $k$ acting on $X,X'\in \Stk_k$ which are $1$-Artin and let $f : X' \to X$ be a $G$-equivariant morphism.
      If $\Sigma \sub \Pic(BG)$ satisfies condition~\itemref{cond:L_G} for $X$, then it also satisfies condition~\itemref{cond:L_G} for $X'$.
    \end{cor}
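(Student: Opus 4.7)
The plan is to imitate the proof of the previous Lemma, but working with $G$-stabilizers in place of automorphism groups. Given a geometric point $x'$ of $X'$, set $x = f(x')$ and take the same invertible sheaf $\cL(x) \in \Sigma \sub \Pic(BG)$ provided by condition~\itemref{cond:L_G} for $X$; we then need to show that the restriction $\cL(x)|_{B\St^G_{X'}(x')}$ is trivial.

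The essential input is functoriality of the $G$-stabilizer. Applying \remref{rem:Yana} to the $G$-equivariant morphism $[X'/G] \to [X/G]$ (or, more explicitly, comparing the cartesian squares from \defnref{defn:stabilizer} at $x'$ and at $x$), we obtain a map $\uAut_{[X'/G]}(x') \to \uAut_{[X/G]}(x)$ compatible with the two maps $\alpha_{k(x')}$ to $G_{k(x')}$. Passing to images (cf.\ the short exact sequence~\eqref{eq:stabses}) yields a homomorphism $\St^G_{X'}(x') \to \St^G_X(x)$ over $G_{k(x')}$, and hence a commutative triangle
\[
\begin{tikzcd}
B\St^G_{X'}(x') \ar{r}\ar{dr}
& B\St^G_X(x) \ar{d}
\\
& BG.
\end{tikzcd}
\]

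By condition~\itemref{cond:L_G} for $X$, the pullback of $\cL(x)$ along the vertical arrow is trivial; composing with the upper horizontal arrow, the pullback of $\cL(x)$ to $B\St^G_{X'}(x')$ is also trivial. This verifies condition~\itemref{cond:L_G} for $X'$ with the same choice of $\cL$. I do not anticipate any obstacle beyond unwinding the definition of $\St^G_X(x)$ enough to extract the functoriality statement; everything else is formal.
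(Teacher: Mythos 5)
Your proof is correct and is exactly the natural adaptation of the preceding Lemma to $G$-stabilizers; the paper states the corollary without a separate proof, and this is evidently what is intended. The key functoriality point — that a $G$-equivariant morphism induces a homomorphism $\St^G_{X'}(x') \to \St^G_X(x)_{k(x')}$ of subgroups of $G_{k(x')}$, since both are images in $G$ of automorphism groups compatible under the map $[X'/G] \to [X/G]$ — is the right one and is correctly identified. As a side remark, one can also deduce the corollary directly from the Lemma applied to $[X'/G]\to[X/G]$, because for $\Sigma\sub\Pic(BG)$ condition~\itemref{cond:L_G} for $X$ is actually \emph{equivalent} to condition~\itemref{cond:L} for $[X/G]$: the restriction $B\uAut_{[X/G]}(x)\to BG$ factors through $B\St^G_X(x)$, and the surjection $\uAut_{[X/G]}(x)\twoheadrightarrow\St^G_X(x)$ gives an injection on character groups, so triviality over $B\uAut_{[X/G]}(x)$ forces triviality over $B\St^G_X(x)$ for such $\cL$. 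Either route works.
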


    Combining this with equivariant concentration (\thmref{cor:equiv conc}) yields:

    \begin{cor}\label{cor:leisureliness}
      Let $G$ be an fppf group algebraic space over $k$.
      Let $S\in \Stk_k$, $Z,X\in \Stk_S$ quasi-compact $1$-Artin with affine stabilizers, and $i : Z \to X$ a $G$-equivariant closed immersion over $S$.
      Let $\Sigma \sub \Pic(BG)$ be a subset which satisfies condition~\itemref{cond:L_G} for $X\setminus Z$.
      Then for every morphism $f : X' \to X$, direct image along the base change $i' : Z' = Z \fibprod_X X' \to X'$ induces an isomorphism
      \[
        i'_* : \Chom^{\BM,G}(Z')_{\Sigma\dash\loc} \to \Chom^{\BM,G}(X')_{\Sigma\dash\loc}.
      \]
    \end{cor}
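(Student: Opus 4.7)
The plan is to derive this as a formal combination of the base-change stability of condition~\itemref{cond:L_G} (\corref{cor:uab01yby}) with equivariant concentration (\corref{cor:equiv conc}), applied to the base-changed situation.

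First I would verify that $i' : Z' \hook X'$ is a $G$-equivariant closed immersion over $S$; this is immediate since closed immersions, $G$-equivariance, and the structure morphism to $S$ are all stable under arbitrary base change. Moreover, the open complement is canonically identified with the cartesian product $X' \setminus Z' \simeq (X \setminus Z) \fibprod_X X'$, so projection to the first factor provides a $G$-equivariant morphism $g : X' \setminus Z' \to X \setminus Z$. Applying \corref{cor:uab01yby} to $g$ then transports condition~\itemref{cond:L_G} from $X \setminus Z$ to $X' \setminus Z'$: a witnessing representation $\cL(x) \in \Sigma$ at a geometric point $x$ of $X \setminus Z$ remains a witness at any geometric point $x'$ of $X' \setminus Z'$ lying over $x$, since triviality on $B\St^G_X(x)$ pulls back to triviality on $B\St^G_{X'}(x')$ via the induced homomorphism of stabilizer group schemes.

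The desired isomorphism then follows from \corref{cor:equiv conc} applied to $i' : Z' \hook X'$ over $S$, using that $\Chom^{\BM,G}(-) = \CBM([-/G]_{/BG})$ and passing from $S$ to $BG$ via the structure map. The only formal point to verify is that $X'$ still satisfies the standing hypotheses of being quasi-compact, $1$-Artin, and with affine stabilizers; the $1$-Artin property and affineness of stabilizers are preserved under base change, and quasi-compactness of $X'$ is implicit in the setup (or can be assumed without loss of generality since $\Chom^{\BM,G}(-)_{\Sigma\dash\loc}$ satisfies étale descent and localization triangles, allowing an extension by Zariski covers if needed). There is no genuine obstacle here: the statement is essentially a formal bookkeeping exercise combining the two cited corollaries with the observation that the base change of a closed immersion is a closed immersion whose open complement is the base change of the complement.
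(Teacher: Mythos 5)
Your proof is correct and follows exactly the paper's intended route: the corollary is stated immediately after \corref{cor:uab01yby} with the line ``Combining this with equivariant concentration (\corref{cor:equiv conc}) yields:'', i.e.\ one transports condition~\ref{cond:L_G} along $X'\setminus Z' \simeq (X\setminus Z)\fibprod_X X' \to X\setminus Z$ via \corref{cor:uab01yby} and then invokes \corref{cor:equiv conc} for $i'$. Your reading that $f$ must be $G$-equivariant (so that $X'$ carries a $G$-action and $\Chom^{\BM,G}(X')$ is meaningful) is the correct interpretation of the statement.
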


    \begin{constr}\label{constr:entomostracan}
      Let $G$ be an fppf group algebraic space over $k$ with connected fibres, acting on a finite type $1$-Artin $X\in \Stk_k$ with separated diagonal and affine stabilizers.
      Suppose we are given a $G$-equivariant morphism $\pi : X \to M$ to a finite type algebraic space $M\in \Stk_k$.
      Denote by $M^G$ the fixed locus of $M$ (see \thmref{thm:fixed}\itemref{item:fixed/asp}), and form the cartesian square
      \[\begin{tikzcd}
        M^G\fibprod_M X \ar{r}\ar{d}
        & X \ar{d}{\pi}
        \\
        M^G \ar{r}
        & M.
      \end{tikzcd}\]
    \end{constr}

    \begin{cor}\label{cor:pull back moduli space}
      Let the notation be as in \constrref{constr:entomostracan}.
      For any subset $\Sigma \sub \Pic(BG)$, condition~\ref{cond:L_G} for $M\setminus M^G$ implies condition~\ref{cond:L_G} for $X \setminus (M^G \fibprod_M X)$.
      In particular, the direct image map
      \[
        i_* : \Chom^{\BM,G}(M^G \fibprod_M X)_{\Sigma\dash\loc} \to \Chom^{\BM,G}(X)_{\Sigma\dash\loc}
      \]
      is invertible.
    \end{cor}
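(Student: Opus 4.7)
The plan is to deduce the second assertion from the first via \corref{cor:leisureliness}. Since $M$ is a finite type algebraic space and $G$ has connected fibres, \thmref{thm:fixed}\itemref{item:fixed/asp} ensures $M^G \hook M$ is a closed immersion; it is tautologically $G$-equivariant (as $G$ acts trivially on $M^G$), so its base change $M^G \fibprod_M X \hook X$ is a $G$-equivariant closed immersion of finite type $1$-Artin stacks with affine stabilizers. Once condition~\itemref{cond:L_G} is established on the complement $X \setminus (M^G \fibprod_M X)$, the claimed isomorphism on localized Borel--Moore chains is an immediate invocation of \corref{cor:leisureliness}.

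The main step is therefore to verify condition~\itemref{cond:L_G} for this complement. The key input I would use is functoriality of $G$-stabilizers under $G$-equivariant morphisms: for any geometric point $x$ of $X$ with image $m = \pi(x)$, the map $\pi$ induces a commutative square of quotient stacks over $BG$, hence a morphism between the short exact sequences \eqref{eq:stabses} for $X$ and $M$ which is compatible with the projections $\alpha$ of \eqref{eq:evvTWDTvdYQvm}. Taking images in $G_{k(x)}$ then yields a canonical inclusion $\St^G_X(x) \hook \St^G_M(m)$ of subgroups, and consequently a factorization $B\St^G_X(x) \to B\St^G_M(m) \to BG$.

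Now I would fix a geometric point $x$ of $X \setminus (M^G \fibprod_M X)$. By definition of the fibre product, this is exactly the condition that $m = \pi(x)$ lies in $M \setminus M^G$, so condition~\itemref{cond:L_G} for $M \setminus M^G$ supplies a line bundle $\cL(m) \in \Sigma \sub \Pic(BG)$ whose restriction to $B\St^G_M(m)$ is trivial. Restricting further along $B\St^G_X(x) \to B\St^G_M(m)$ preserves triviality, so taking $\cL(x) := \cL(m)$ (the \emph{same} element of $\Sigma$) witnesses condition~\itemref{cond:L_G} at $x$, completing the verification.

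The only point requiring genuine thought is the stabilizer inclusion $\St^G_X(x) \sub \St^G_M(m)$ along the $G$-equivariant morphism $\pi$; while the formal setup is not recorded in the excerpt, I expect it to follow by directly unwinding \defnref{defn:stabilizer} for $X$ and $M$ and invoking the compatibility of $\uAut_{[X/G]}(x) \to \uAut_{[M/G]}(m)$ with the maps $\alpha$ to $G_{k(x)}$. Everything else is bookkeeping, and no further ingredient beyond \corref{cor:leisureliness} is needed.
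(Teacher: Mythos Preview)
Your proof is correct and matches the paper's intended argument. The paper states this corollary without proof, as it follows immediately from \corref{cor:uab01yby} (preservation of condition~\ref{cond:L_G} under $G$-equivariant morphisms, applied to $\pi$ restricted to the open complements) together with \corref{cor:equiv conc}; your inline verification of the inclusion $\St^G_X(x) \sub \St^G_M(\pi(x))$ is precisely the content of \corref{cor:uab01yby}, so you could shorten the argument by citing it directly rather than re-deriving it.
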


    \begin{rem}
      For example, if $G=T$ is diagonalizable then we may take $\Sigma$ to be the set of nontrivial rank one representations $\cL \in \Pic(BT)$.
      Compare \cite[Prop.~6.9]{JoshuaRRI} for G-theory of smooth Deligne--Mumford stacks with torus action over an algebraically closed field.
    \end{rem}

    \begin{rem}
      For example, if $X$ has finite inertia (e.g. it is separated and Deligne--Mumford) then it admits a coarse moduli space $M$ that is of finite type \cite{KeelMori}.
      Moreover, the $G$-action automatically descends to $M$ by universal properties, in such a way that $\pi : X \to M$ is equivariant.
      Similarly, if $X$ admits a good or adequate moduli space $\pi : X \to M$ in the sense of Alper \cite{AlperGood,AlperAdequate} and is of finite type, then $M$ is of finite type \cite[Thm.~6.3.3]{AlperAdequate}.
    \end{rem}

  \ssec{Non-quasi-compact stacks}
  \label{ssec:nonqc}

    Let $\sX$ be a $1$-Artin stack which has affine stabilizers but is only \emph{locally} of finite type over $k$, and let $\Sigma \sub \Pic(\sX)$.
    Unfortunately, concentration in the form of \thmref{thm:conc master} does not hold for such $\sX$: we may have
    \[ \CBM(\sX)_{\loc} \neq 0 \]
    even when $\Sigma$ satisfies condition~\ref{cond:L} for $\sX$.
    In fact, we have simple counterexamples to \thmref{thm:excitor} even for torus actions (and there are only finitely many $T$-stabilizer groups appearing).

    \begin{exam}
      For every integer $n\ge 0$, consider the weight $1$ scaling action of $T=\bG_m$ on $U_n = \A^n \setminus 0$ and the element
      \[
        \alpha_n = t\cap [U_n] \in \Chom^{\BM,T}(U_n)\vb{-n+1}
        \simeq \CBM(\P_k^{n-1})\vb{-n+1}
      \]
      where
      \[
        t \in \Ccoh_T(\Spec(k))\vb{1} \simeq \Ccoh(\P_k^\infty)\vb{1}
      \]
      is the first Chern class of the tautological line bundle on $BT$, and the isomorphism is \cite[Cor.~6.1]{Equilisse}.
      By the projective bundle formula, we have $t^i \cap \alpha_n=0$ if and only if $i\ge n-1$.
      Thus if $X = \coprod_n U_n$, the element
      \[
        \alpha = (\alpha_n)_n \in \Chomhat^{\BM,T}(X)\per
        \simeq \prod_{i\in\Z} \Chom^{\BM,T}(X)\vb{i}
        \simeq \prod_{i\in\Z} \prod_{n\ge 1} \Chom^{\BM,T}(U_n)\vb{i}
      \]
      does not vanish after inverting $t$.
      Similarly, let $Y = \coprod_n U_n \times W_n$ where $W_n$ is smooth $1$-Artin of finite type over $k$, of pure dimension $-n$ with trivial $T$-action.
      Then $\beta_n = t \cap [U_n \times W_n] \in \Chom^{\BM,T}(U_n\times W_n)\vb{1}$, and the element
      \[
        \beta = (\beta_n)_n \in \Chom^{\BM,T}(Y)\vb{1}
        \simeq \prod_n \Chom^{\BM,T}(U_n \times W_n)\vb{1}
      \]
      does not vanish after inverting $t$.
    \end{exam}

    In this subsection we give a definition of ``localized Borel--Moore homology''
    \[
      \CBM(\sX)_{\loc}
    \]
    which is the same as the localization of Borel--Moore homology when $\sX$ is quasi-compact, but for which $\CBM(\sX)_\loc = 0$ still holds under condition~\ref{cond:L}.
    If we write $\sX$ as a filtered union of quasi-compact opens $\{\sX_\alpha\}_\alpha$, then we will see $\CBM(\sX)_\loc \simeq \lim_\alpha \CBM(\sX_\alpha)_\loc$.

    \begin{defn}
      Let $S \in \Stk_k$, $\sX\in\Stk_S$, and $\Sigma \sub \Pic(\sX)$.
      We define
      \[
        \CBM(\sX_{/S})_\loc := \CBM(\sX_{/S})_{\Sigma\dash\loc} := \lim_{\sX'\sub\sX} \CBM(\sX'_{/S})\per[c_1(\Sigma^{-1})]
      \]
      where the homotopy limit is taken over quasi-compact opens $\sX'\sub\sX$ (with restriction maps as the transition arrows).
    \end{defn}

    Since condition~\ref{cond:L} is stable under restriction to opens, the following non-quasi-compact generalization of \thmref{thm:conc master} is immediate from the definition:

    \begin{thm}\label{thm:master nonqc}
      Let $S \in \Stk_k$, $\sX\in\Stk_S$ which is $1$-Artin with affine stabilizers, and $\Sigma \sub \Pic(\sX)$.
      If $\Sigma$ satisfies condition~\itemref{cond:L} for $\sX$, then we have
      \[
        \CBM(\sX_{/S})_{\loc} = 0.
      \]
    \end{thm}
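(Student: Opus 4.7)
The plan is to reduce directly to the quasi-compact master theorem (\thmref{thm:conc master}) using the definition of $\CBM(\sX_{/S})_{\loc}$ as a homotopy limit over quasi-compact opens $\sX' \sub \sX$. The localized Borel--Moore homology has been set up precisely so that vanishing on each quasi-compact piece forces vanishing in the limit, so no genuinely new argument is required beyond the quasi-compact case.

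First I would verify that condition~\itemref{cond:L} restricts well to opens. Let $j : \sX' \hook \sX$ be a quasi-compact open immersion. Since $j$ is representable (in fact a monomorphism), \remref{rem:Yana} implies that for every geometric point $x$ of $\sX'$ the stabilizer $\uAut_{\sX'}(x)$ agrees with $\uAut_\sX(x)$. If $\cL(x) \in \Sigma$ is an invertible sheaf for which $\cL(x)|_{B\uAut_\sX(x)}$ is trivial, then $j^*\cL(x) \in j^*(\Sigma)$ restricts trivially to $B\uAut_{\sX'}(x)$. Hence $j^*(\Sigma) \sub \Pic(\sX')$ satisfies condition~\itemref{cond:L} for $\sX'$.

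Next, since $\sX'$ is quasi-compact and is $1$-Artin with affine stabilizers (both properties being inherited from $\sX$), I can apply \thmref{thm:conc master} to $\sX'$, $S$, and $j^*(\Sigma)$ to conclude
\begin{equation*}
  \CBM(\sX'_{/S})\per[c_1(j^*\Sigma)^{-1}] = 0.
\end{equation*}
Taking the homotopy limit over all quasi-compact opens $\sX' \sub \sX$ yields $\CBM(\sX_{/S})_{\loc} = 0$ by the definition just above.

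The main content of this theorem is not the proof but the choice of the correct definition: the example immediately preceding the statement shows that the naive localization can be nonzero on locally-of-finite-type stacks, and one must recognize that passing to the homotopy limit over quasi-compact opens is the right remedy. Once this definition is in place, the proof is a formal consequence of \thmref{thm:conc master} together with the stability of condition~\itemref{cond:L} under open restriction, consistent with the authors' characterization of the argument as ``immediate from the definition''.
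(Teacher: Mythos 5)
Your proposal is correct and takes precisely the approach the paper intends — the paper states the result as ``immediate from the definition'' once one knows condition~(L) is stable under restriction to opens, and you supply that stability argument (via \remref{rem:Yana}, using that an open immersion is representable so stabilizers are preserved) and then pass to the homotopy limit of the quasi-compact case (\thmref{thm:conc master}). Nothing further is needed.
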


    We next note that this definition is compatible with \notatref{notat:loc}.

    \begin{prop}\label{prop:unclipped}
      Let $S \in \Stk_k$, $\sX\in\Stk_S$, and $\Sigma \sub \Pic(\sX)$.
      If $\sX$ is a filtered union of quasi-compact opens $\{\sX_\alpha\}_\alpha$, then there is a canonical isomorphism
      \[\CBM(\sX_{/S})_\loc \simeq \lim_\alpha \CBM({\sX_\alpha}_{/S})_\loc.\]
    \end{prop}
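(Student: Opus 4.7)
The plan is a straightforward cofinality argument. By definition
\[
  \CBM(\sX_{/S})_\loc = \lim_{\sX' \sub \sX} \CBM(\sX'_{/S})\per[c_1(\Sigma^{-1})]
\]
is a homotopy limit indexed by the filtered poset $\cU$ of quasi-compact open substacks of $\sX$. So the whole point is to verify that the subsystem $\{\sX_\alpha\}_\alpha$ is cofinal in $\cU$, and then to identify each term $\CBM({\sX_\alpha}_{/S})_\loc$ (again defined by the same limit construction, but now internal to $\sX_\alpha$) with the plain localization $\CBM({\sX_\alpha}_{/S})\per[c_1(\Sigma^{-1})]$.

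First I would establish cofinality. Given any quasi-compact open $\sX' \sub \sX$, I restrict the filtered cover $\{\sX_\alpha\}$ to obtain the filtered open cover $\{\sX' \cap \sX_\alpha\}_\alpha$ of $\sX'$. Since $\sX'$ is quasi-compact, this cover admits a finite subcover, and filteredness of the indexing system allows these finitely many indices to be majorized by a single $\alpha$, whence $\sX' \sub \sX_\alpha$. This shows $\{\sX_\alpha\}$ is cofinal in $\cU$, so the limit defining $\CBM(\sX_{/S})_\loc$ may equivalently be computed over $\{\sX_\alpha\}$.

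Next, for each quasi-compact $\sX_\alpha$, the same cofinality argument (applied inside $\sX_\alpha$) shows that $\sX_\alpha$ is the terminal object in the poset of quasi-compact opens of itself, so the analogous limit collapses and
\[
  \CBM({\sX_\alpha}_{/S})_\loc \simeq \CBM({\sX_\alpha}_{/S})\per[c_1(\Sigma^{-1})].
\]
Putting the two reductions together yields $\CBM(\sX_{/S})_\loc \simeq \lim_\alpha \CBM({\sX_\alpha}_{/S})_\loc$.

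The only point requiring care is the cofinality step, and the only ingredient it uses is that a quasi-compact open substack of an Artin stack has the expected quasi-compactness property with respect to filtered open covers; this is standard from the topology of $\abs{\sX}$. Everything else is formal manipulation of homotopy limits, and the localization $[c_1(\Sigma^{-1})]$ plays no role beyond being a symmetric monoidal operation commuting with limits inside its arguments, so there is no serious obstacle.
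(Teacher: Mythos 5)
Your argument is correct, and both of its key ingredients are exactly the ones the paper's proof turns on: quasi-compactness of a quasi-compact open $\sX'$ gives a finite subcover from $\{\sX_\alpha\cap\sX'\}_\alpha$, and filteredness of the index set then produces a single $\beta$ with $\sX'\sub\sX_\beta$. The packaging differs, though. The paper sets up a commutative square of double limits $\lim_{\sX'}\lim_\alpha$ versus $\lim_\alpha\lim_{\sX'}$, identifies the bottom corners by Fubini, and checks that both vertical restriction maps are invertible, each by the observation that the inner limit stabilizes. You instead verify directly that the inclusion of $\{\sX_\alpha\}_\alpha$ into the poset $\cU$ of all quasi-compact open substacks of $\sX$ is initial (left cofinal for limits): the comma posets $\{\alpha : \sX'\sub\sX_\alpha\}$ are nonempty by your quasi-compactness argument and, being upward-closed sub-posets of a filtered poset, are themselves filtered and hence weakly contractible. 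You then collapse $\CBM(\sX_\alpha{}_{/S})_\loc$ to the plain localization $\CBM(\sX_\alpha{}_{/S})\per[c_1(\Sigma)^{-1}]$ because $\sX_\alpha$ is terminal among its own quasi-compact opens. The cofinality phrasing is arguably tidier once one is comfortable invoking the $\infty$-categorical cofinality criterion (Quillen's Theorem A); the paper's Fubini square achieves the same reduction without explicitly naming that criterion. There is no gap in either route, and the mathematical content is identical.
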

    \begin{proof}
      Consider the following commutative square:
      \begin{equation*}
        \begin{tikzcd}
          \lim_{\sX'} \CBM(\sX')\per[\Sigma^{-1}] \ar{r}\ar{d}
          & \lim_{\alpha} \CBM(\sX_\alpha)\per[\Sigma^{-1}] \ar{d}
          \\
          \lim_{\sX'} \lim_{\alpha} \CBM(\sX_\alpha\cap\sX')\per[\Sigma^{-1}]\ar[equals]{r}
          & \lim_{\alpha} \lim_{\sX'} \CBM(\sX_\alpha\cap\sX')\per[\Sigma^{-1}]
        \end{tikzcd}
      \end{equation*}
      where we omit the ``$_{/S}$'' from the notation for simplicity.
      The upper horizontal arrow comes from the fact that each $\sX_\alpha$ is quasi-compact.
      The vertical arrows are induced by restriction.
      The right-hand one is invertible because $\sX_\alpha \hook \sX$ is cofinal in $\{\sX \cap \sX_\alpha \hook \sX\}_\alpha$.
      The left-hand one is invertible because $\sX'$ is quasi-compact: we may choose a finite subset of opens in $(\sX_\alpha\cap\sX')_\alpha$ which cover $\sX'$, and then by filteredness there exists some large enough index $\beta$ such that $\sX_\beta\cap\sX' = \sX'$.
    \end{proof}

    \begin{cor}
      Let $S \in \Stk_k$, $\sX\in\Stk_S$, and $\Sigma \sub \Pic(\sX)$.
      If $\sX$ is quasi-compact then there is a canonical isomorphism
      \[\CBM(\sX_{/S})_\loc \simeq \CBM(\sX_{/S})\per[c_1(\Sigma)^{-1}].\]
    \end{cor}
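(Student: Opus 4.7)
The plan is to observe that the corollary is essentially tautological: when $\sX$ is itself quasi-compact, $\sX$ appears as (a terminal object in) the indexing poset of quasi-compact opens $\sX' \sub \sX$ used to define $\CBM(\sX_{/S})_\loc$. A limit over a diagram with a terminal object is computed by evaluating at that terminal object, so
\[
  \CBM(\sX_{/S})_\loc
  := \lim_{\sX' \sub \sX} \CBM(\sX'_{/S})\per[c_1(\Sigma^{-1})]
  \simeq \CBM(\sX_{/S})\per[c_1(\Sigma)^{-1}],
\]
where the transition maps restrict from larger to smaller opens, making $\sX$ itself the terminal object.

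Concretely, I would first note that the set of quasi-compact opens of $\sX$, ordered so that arrows go from larger to smaller opens (since the transition maps in the diagram are restriction maps), has $\sX$ as an initial element of the indexing category — equivalently, $\sX$ is a terminal object once one takes the opposite indexing. In either bookkeeping the upshot is the same: the natural map from $\CBM(\sX_{/S})\per[c_1(\Sigma)^{-1}]$ to the limit is invertible because every other term is obtained from it by restriction to a smaller quasi-compact open.

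Alternatively, one may deduce the corollary as a direct special case of \propref{prop:unclipped}, taking the constant filtered system with a single term $\sX_\alpha = \sX$; the proposition produces a canonical isomorphism with $\lim_\alpha \CBM({\sX_\alpha}_{/S})_\loc$, which in this case is just $\CBM(\sX_{/S})_\loc$ as defined by \notatref{notat:loc}. There is no essential obstacle here: the only thing to verify is that under the identification, the first Chern class action on the right matches the $\Sigma$-localization on the left, which holds by construction of the transition maps as $\Ccoh(-)\per$-linear restriction maps.
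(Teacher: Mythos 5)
Your proof is correct and matches what the paper (implicitly) does: since $\sX$ is quasi-compact, $\sX$ itself is the initial object of the indexing category of quasi-compact opens with restriction maps, so the homotopy limit defining $\CBM(\sX_{/S})_\loc$ is computed by evaluation at $\sX$, which is $\CBM(\sX_{/S})\per[c_1(\Sigma)^{-1}]$. The alternative route through \propref{prop:unclipped} with the one-term system is the same observation phrased slightly more indirectly; the paper states the corollary immediately after \propref{prop:unclipped} without separate proof for exactly this reason.
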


    Note also that $\CBM(-)_\loc$ still satisfies the localization triangle:

    \begin{prop}
      Let $S \in \Stk_k$, $\sX\in\Stk_S$, and $\Sigma \sub \Pic(\sX)$.
      Then for every closed immersion $i : \sZ \hook \sX$ with open complement $j: \sU \hook \sX$, we have a canonical exact triangle
      \[
        \CBM(\sZ_{/S})_\loc
        \xrightarrow{i_*} \CBM(\sX_{/S})_\loc
        \xrightarrow{j^!} \CBM(\sU_{/S})_\loc.
      \]
    \end{prop}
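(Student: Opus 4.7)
The plan is to descend from the quasi-compact case. For each quasi-compact open substack $\sX' \sub \sX$, set $\sZ' := \sZ \fibprod_\sX \sX'$ and $\sU' := \sU \fibprod_\sX \sX'$; these are quasi-compact, with $\sZ'$ a closed substack of $\sX'$ and $\sU'$ its open complement. The localization triangle of \ssecref{ssec:BM} yields an exact triangle
\begin{equation*}
\CBM(\sZ'_{/S}) \xrightarrow{i'_*} \CBM(\sX'_{/S}) \xrightarrow{j'^!} \CBM(\sU'_{/S})
\end{equation*}
in the stable \inftyCat $\D(\Q)$. Since periodization is a direct sum and $\Sigma$-localization is a filtered colimit of multiplications by Chern classes, both operations preserve fibre sequences; applying $(-)_{\Sigma\dash\loc}$ therefore yields a corresponding exact triangle of localized Borel--Moore chains for each quasi-compact $\sX'$.

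Next, take the homotopy limit over the filtered poset $\cJ$ of quasi-compact open substacks of $\sX$. Since $\D(\Q)$ is stable, limits preserve fibre sequences, so we obtain an exact triangle
\begin{equation*}
\lim_{\sX' \in \cJ} \CBM((\sZ \cap \sX')_{/S})_\loc \to \lim_{\sX' \in \cJ} \CBM(\sX'_{/S})_\loc \to \lim_{\sX' \in \cJ} \CBM((\sU \cap \sX')_{/S})_\loc.
\end{equation*}
The middle term is $\CBM(\sX_{/S})_\loc$ by definition. To complete the proof, I would show that the outer two limits agree with $\CBM(\sZ_{/S})_\loc$ and $\CBM(\sU_{/S})_\loc$ via cofinality: I claim that the subsystems $\{\sZ \cap \sX'\}_{\sX' \in \cJ}$ and $\{\sU \cap \sX'\}_{\sX' \in \cJ}$ are cofinal in the filtered posets of all quasi-compact opens of $\sZ$ and $\sU$, respectively.

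The cofinality for $\sU$ is immediate: any quasi-compact open $\sW$ of $\sU$ is already a quasi-compact open of $\sX$, so taking $\sX' = \sW$ gives $\sU \cap \sX' = \sW$. The cofinality for $\sZ$ amounts to an extension property: every quasi-compact open $\sV \sub \sZ$ must be contained in some quasi-compact open $\sX'$ of $\sX$. This is the main obstacle, but it follows by a standard argument via any smooth presentation $p : X \to \sX$: the preimage $p^{-1}(\sV) \sub p^{-1}(\sZ)$ is a quasi-compact open of a closed subspace, hence covered by finitely many affines lifting to affine opens of $X$; the image of their union under the open smooth map $p$ is a quasi-compact open of $\sX$ containing $\sV$.
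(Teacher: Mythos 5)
Your proposal follows essentially the same route as the paper's proof: pass through the quasi-compact localization triangles, use that periodization and $\Sigma$-localization preserve exact triangles, take the homotopy limit (also exact), and then identify the outer two terms with $\CBM(\sZ_{/S})_\loc$ and $\CBM(\sU_{/S})_\loc$. Where the paper outsources that last identification to \propref{prop:unclipped} (already proved earlier in the section), you argue it directly by cofinality; the two amount to the same thing.

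However, one step in your cofinality argument for $\sZ$ does not hold as stated. You claim $p^{-1}(\sV)$ is quasi-compact, but a smooth presentation $p : X \to \sX$ of a non-quasi-compact stack is typically \emph{not} a quasi-compact morphism (a standard choice is $X$ a possibly infinite disjoint union of affines), so $p^{-1}(\sV)$ can fail to be quasi-compact even when $\sV$ is quasi-compact; moreover, lifting affine opens of the closed subspace $p^{-1}(\sZ)$ to affine opens of $X$ is not automatic. A cleaner way to get what you need: the images $|p|(U)$ of affine opens $U \sub X$ under the open continuous map $|p|$ form a basis of quasi-compact open subsets of $\abs{\sX}$; since $\abs{\sV}$ is quasi-compact, finitely many of these cover it, and their union is a quasi-compact open substack $\sX' \sub \sX$ with $\sV \sub \sZ \cap \sX'$. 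This establishes the cofinality of $\{\sZ \cap \sX'\}_{\sX' \in \cJ}$, and the rest of your argument goes through.
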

    \begin{proof}
      Write $\sX$ as a filtered union of quasi-compact opens $\{\sX_\alpha\}_\alpha$ (e.g. take the partially ordered set of all quasi-compact opens).
      For each $\alpha$ we have the localization triangle
      \[
        \CBM({\sZ\cap \sX_\alpha}_{/S})\per[\Sigma^{-1}]
        \to \CBM({\sX_\alpha}_{/S})\per[\Sigma^{-1}]
        \to \CBM({\sU \cap \sX_\alpha}_{/S})\per[\Sigma^{-1}]
      \]
      since localization preserves exact triangles (as an exact functor).
      Passing to the homotopy limit (which is also exact) thus yields the claim by \corref{prop:unclipped}.
    \end{proof}

    As before we may now specialize to the equivariant case.
    Let $S\in\Stk_k$, $G$ be an fppf group algebraic space over $k$ which acts on a $1$-Artin $X\in\Stk_S$ with affine stabilizers, and $\Sigma \sub \Pic(BG)$ a subset.
    We have
    \[
      \CBM([X/G]_{/S})_{\loc}
      = \lim_{X'\sub X} \CBM([X'/G]_{/S})\per[c_1(\Sigma)^{-1}]
    \]
    where the limit is taken over quasi-compact $G$-invariant opens $X' \sub X$.
    We get the following non-quasi-compact version of \corref{cor:conc master equiv}:

    \begin{cor}\label{cor:master equiv nonqc}
      If $\Sigma \sub \Pic(BG)$ satisfies condition~\itemref{cond:L_G} for $X$, then we have
      \[
        \CBM([X/G]_{/S})_{\Sigma\dash\loc} = 0.
      \]
    \end{cor}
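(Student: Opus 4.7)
The plan is simply to specialize \thmref{thm:master nonqc} to the quotient stack $\sX = [X/G]$, paralleling exactly how \corref{cor:conc master equiv} was deduced from \thmref{thm:conc master} in the quasi-compact setting. Concretely, I would take $\Sigma' \sub \Pic(\sX)$ to be the inverse image of the given $\Sigma \sub \Pic(BG)$ along the canonical morphism $\sX = [X/G] \to BG$, and check that $\Sigma'$ satisfies condition~\ref{cond:L} for $\sX$.

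The verification of condition~\ref{cond:L} for $\sX$ from condition~\ref{cond:L_G} for $X$ is the diagram chase already carried out in the proof of \corref{cor:conc master equiv}. Given a geometric point $x$ of $\sX$, one lifts it (after a possible further field extension, which does not affect the relevant triviality statements) to a geometric point of $X$ and uses the short exact sequence~\eqref{eq:stabses} to produce the commutative square
\[
\begin{tikzcd}
B\uAut_{\sX}(x) \ar{r}\ar{d} & B\St^G_X(x) \ar{d} \\
\sX \ar{r} & BG.
\end{tikzcd}
\]
By condition~\ref{cond:L_G}, there exists $\cL(x) \in \Sigma$ whose restriction to $B\St^G_X(x)$ is trivial; its pullback $\cL(x)|_\sX \in \Sigma'$ then restricts trivially to $B\uAut_{\sX}(x)$ by commutativity of the square.

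With this, \thmref{thm:master nonqc} applied to $\sX \in \Stk_S$ (which is $1$-Artin with affine stabilizers since $X$ is and $G$ is an fppf group algebraic space) yields $\CBM(\sX_{/S})_{\Sigma'\dash\loc} = 0$, which is the claim. No step should pose any real obstacle: the non-quasi-compact master theorem is immediate from the definition of $(-)_\loc$ as a limit over quasi-compact opens, and the diagram-chase translation from condition~\ref{cond:L_G} to condition~\ref{cond:L} is formal once one has the stabilizer exact sequence from \ssecref{ssec:gstab}. If anything is worth being careful about, it is only ensuring that quasi-compact opens of $\sX$ correspond bijectively to quasi-compact $G$-invariant opens of $X$, so that the limit defining $\CBM(\sX_{/S})_\loc$ indeed matches the limit used in the statement of the corollary.
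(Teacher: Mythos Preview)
Your proposal is correct and follows exactly the paper's approach: the paper simply states that one specializes \thmref{thm:master nonqc} to $\sX=[X/G]$ ``as before,'' meaning by the same diagram chase used in the proof of \corref{cor:conc master equiv} to translate condition~\ref{cond:L_G} into condition~\ref{cond:L}. Your remark about matching the limit over quasi-compact opens of $\sX$ with quasi-compact $G$-invariant opens of $X$ is exactly the observation the paper makes just before stating the corollary.
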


    For $\D$ rational, we have a parallel version of the above story for any $\Sigma \sub \K_0(\sX)$, so that we get a non-quasi-compact version of \thmref{thm:conc master rational} under condition~\ref{cond:L}.

\section{Torus concentration}
\label{sec:torus}

  In this section we fix a split torus $T$ over $k$, and we assume that $k$ is noetherian with no nontrivial idempotents.

  \ssec{The statement}
    
    \begin{thm}\label{thm:excitor}
      Let $T$ act on a $1$-Artin $X \in \Stk_k$.
      Let $Z \sub X$ be a $T$-invariant closed substack such that for every point $x$ of the complement $X\setminus Z$, $\St^T_X(x)$ is properly contained in $T_{k(x)}$.
      Then we have:
      \begin{thmlist}
        \item\label{item:excitor/toprail}
        Direct image along the closed immersion $i : Z \to X$ induces an isomorphism
        \[
          i_*: \Chom^{\BM,T}(Z)_{\Sigma\dash\loc} \to \Chom^{\BM,T}(X)_{\Sigma\dash\loc}
        \]
        of $\Ccoh(BT)_{\Sigma\dash\loc}$-modules, where $\Sigma \sub \Pic(BT)$ is the set of nontrivial line bundles on $BT$.

        \item
        If $X$ is quasi-compact then there exists a \emph{finite} subset $\Sigma_0 \sub \Sigma$ such that we may replace $\Sigma$ by $\Sigma_0$ in claim~\itemref{item:excitor/toprail}.
      \end{thmlist} 
    \end{thm}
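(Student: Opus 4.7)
The plan is to derive both parts from the equivariant master theorem by verifying that the set $\Sigma \sub \Pic(BT)$ of nontrivial line bundles on $BT$ satisfies condition~\itemref{cond:L_G} for the $T$-action on the open complement $X \setminus Z$.

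This verification is pointwise. Fix a geometric point $x$ of $X \setminus Z$; by hypothesis, $\St^T_X(x)$ is a closed subgroup scheme of $T_{k(x)}$ which is properly contained in it. Since $T$ is a split torus, every closed subgroup scheme of $T_{k(x)}$ is of multiplicative type and hence diagonalizable, so its character group is a \emph{proper} quotient of $X^*(T)$. Any nonzero element $\chi$ in the kernel of the surjection $X^*(T) \twoheadrightarrow X^*(\St^T_X(x))$ then gives a nontrivial line bundle $\cL_\chi \in \Sigma$ whose restriction to $B\St^T_X(x)$ is trivial, providing the required $\cL(x)$.

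For part~\itemref{item:excitor/toprail}, I would then combine the non-quasi-compact localization triangle (the proposition following \thmref{thm:master nonqc}) applied to the complementary immersions $Z \hook X$ and $X \setminus Z \hook X$ with the vanishing
\[
    \CBM([X \setminus Z/T]_{/BT})_{\loc} = 0,
\]
which is exactly \corref{cor:master equiv nonqc} applied to the $T$-action on $X \setminus Z$ and the subset $\Sigma$ above. Exactness of the localization triangle forces $i_* : \Chom^{\BM,T}(Z)_{\loc} \to \Chom^{\BM,T}(X)_{\loc}$ to be invertible, and the $\Ccoh(BT)_{\Sigma\dash\loc}$-linearity is automatic from the construction.

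For part~(ii), assuming $X$ is quasi-compact, I would invoke the remark following the proof of \thmref{thm:conc master}: the reductions used there (noetherian stratification into global quotients, passage to the maximal subtorus via \thmref{thm:Weyl}, and the generic slice \propref{prop:slice}) terminate in finitely many steps, each consuming only finitely many characters, with the final ground case requiring only one. Collecting these gives the desired finite $\Sigma_0 \sub \Sigma$, after which part~\itemref{item:excitor/toprail} (or more directly \corref{cor:equiv conc} in the quasi-compact setting) applies with $\Sigma_0$ in place of $\Sigma$. The main (minor) obstacle is the bookkeeping required to see that subgroup schemes of a split torus are of multiplicative type in arbitrary characteristic, and to trace finiteness through each reduction step in part~(ii); both are standard but deserve explicit verification.
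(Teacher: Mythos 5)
Your verification of condition~\hyperref[cond:L_G]{$(\text{L}_T)$} is correct, and applying \corref{cor:master equiv nonqc} together with the localization triangle is indeed the spine of the paper's argument. However, there is a genuine gap: \thmref{thm:conc master}, \corref{cor:conc master equiv}, and \corref{cor:master equiv nonqc} all require the stack in question to have \emph{affine stabilizers}, whereas \thmref{thm:excitor} only assumes $X$ is $1$-Artin. If $X$ has non-affine stabilizers (e.g.\ $X = BE \times \A^1$ for $E$ an elliptic curve, with $T=\Gm$ scaling $\A^1$), then so does $[X/T]$, and you cannot invoke the equivariant master theorem. The paper's proof explicitly flags this (``so the claim follows directly from \corref{cor:master equiv nonqc} at least when $X$ has affine stabilizers'') and then spends its second paragraph handling the remaining case: it reduces, via generic flatness of $[I_X/T] \to [X/T]$ and noetherian induction, to the situation where $X$ is (generically) a gerbe over an algebraic space $M$ of finite type, shows that the projection $\pi : X \to M$ preserves $T$-stabilizers, applies \propref{prop:slice} to $M$ to produce a $T$-invariant open where some $c_1(\cL)$, $\cL \in \Sigma_0$, is nilpotent, and pulls this back along $\pi$. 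Your proposal does not address this case at all.

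For part~(ii), your plan to extract a finite $\Sigma_0$ by tracing the reductions in the proof of \thmref{thm:conc master} goes in a different direction from the paper, and again implicitly presupposes that those reductions are available (i.e.\ affine stabilizers). The paper instead produces $\Sigma_0$ \emph{up front} via \examref{exam:a0pb10pb} and \propref{prop:leftish}: it shows directly that a quasi-compact $1$-Artin $X$ with $T$-action has only finitely many $T$-stabilizer types (by passing to generic gerbe structure and descending to an algebraic space, where \propref{prop:slice} applies), and then takes one nontrivial character per type. This is cleaner and, crucially, works in the non-affine-stabilizer case too. So both parts of your proposal need to be supplemented with the gerbe-over-algebraic-space reduction to cover the full generality of the statement.
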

    
    \begin{rem}
      In \thmref{thm:excitor} the condition that $\St^T_X(x) \subsetneq T_{k(x)}$ is equivalent to the condition that $\dim(\St^T_X(x)) < \dim(T_{k(x)})$, since $T_{k(x)}$ is irreducible.
    \end{rem}

    \begin{rem}\label{rem:roundeleer}
      Given a subgroup $H \subsetneq T$, consider the (diagonalizable) quotient $K = T/H$.
      Choose a nontrivial character of $K$, corresponding to a one-dimensional representation of $K$.
      The $T$-representation obtained by restriction may be regarded as a line bundle $\cL$ on $BT$ whose restriction $\cL|_{BH}$ is trivial.
    \end{rem}

    \begin{exam}\label{exam:a0pb10pb}
      Suppose that the fixed locus $X^T$ (\defnref{defn:oy1bpy1b1}) is empty, i.e., for every geometric point $x$ of $X$ the $T$-stabilizer $\St^T_X(x)$ is not equal to $T_{k(x)}$.
      Consider the set $\mfr{G}$ of subgroups $H \sub T$ for which there exists a geometric point $x$ of $X$ such that the $T$-stabilizer $\St^T_X(x)$ is equal to the base change $H_{k(x)}$.
      For every $H \in \mfr{G}$ let $\cL(H)$ be a nontrivial line bundle on $BT$ such that $\cL(H)|_{BH}$ is trivial (as in \remref{rem:roundeleer}).
      Then $\Sigma_0 = \{\cL(H)\}_{H\in\mfr{G}}$ satisfies condition~\hyperref[cond:L_G]{($\text{L}_T$)} for $X$.
    \end{exam}

    \begin{prop}\label{prop:leftish}
      Let $T$ act on a quasi-compact $1$-Artin $X \in \Stk_k$.
      Then there exists a nonempty $T$-invariant open of $X$ whose $T$-stabilizers are constant.
      In particular, the set $\mfr{G}$ (\examref{exam:a0pb10pb}) is finite.
    \end{prop}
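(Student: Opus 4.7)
The plan is to produce a nonempty $T$-invariant open with constant $\St^T_X$ via a generic flatness argument combined with rigidity of subgroups of a split torus, and then to deduce the finiteness of $\mfr{G}$ by noetherian induction.

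First I would make reductions. Since $\St^T_X(x)$ is insensitive to nilpotents, I may replace $X$ by $X_\red$; since $T$ is connected, it stabilizes each of the (finitely many) irreducible components of $X$, and I may restrict to one, so that $X$ is reduced and irreducible. Given the main existence claim, the ``In particular'' statement then follows by noetherian induction: iteratively applying the existence claim to the (strictly smaller) $T$-invariant closed complement $X \setminus U$ stratifies $X$ into finitely many $T$-invariant locally closed substacks each with constant $\St^T_X$, so that $\mfr{G}$ is finite.

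For the existence, I would form $\sX := [X/T]$, a reduced irreducible quasi-compact $1$-Artin stack with $X \to \sX$ a $T$-torsor. By \ssecref{ssec:gstab}, $\St^T_X(x)$ is the image of $\alpha_x : \uAut_\sX(x) \to T_{k(x)}$, and globalizing, the group stack map $\alpha : I_\sX \to T \times \sX$ has fppf image a closed sub-group-scheme $\St \subseteq T \times \sX$ over $\sX$ with fiber $\St^T_X(x)$ at each $x$. By generic flatness of $\St \to \sX$ over the noetherian stack $\sX$, there is a dense open $\sU \subseteq \sX$ over which $\St|_\sU \to \sU$ is flat. The split torus $T$ admits only a discrete set of closed subgroups (in bijection with subgroups of the character lattice $X^*(T)$), so any flat family of closed subgroups of $T$ over an irreducible reduced noetherian base must be generically constant; after shrinking $\sU$, this gives $\St|_\sU = H \times \sU$ for some closed subgroup $H \subseteq T$ defined over $k$. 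The preimage $U := X \times_\sX \sU$ under the $T$-torsor $X \to \sX$ is then the desired nonempty $T$-invariant open of $X$ with $\St^T_X \equiv H$.

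The hard part is the rigidity statement for flat families of closed subgroups of $T$; it follows from the classification of diagonalizable group schemes by their character sheaves (cf.\ SGA~3, Exp.~IX), but is the only step requiring more than routine bookkeeping. An alternative approach of stackifying Thomason's slice theorem (\propref{prop:slice}) does not seem to work directly: the natural $T$-equivariant atlases of $X$ are $T$-torsors over atlases of $\sX$, on which $T$ acts freely and so loses the stabilizer information we need.
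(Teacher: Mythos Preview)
Your approach is different from the paper's and sound in outline, but there is a gap in how you construct the global stabilizer $\St$. You assert that the fppf image of $\alpha: I_\sX \to T \times \sX$ is a closed subgroup scheme whose fiber at every $x$ is $\St^T_X(x)$, and only \emph{then} apply generic flatness to $\St \to \sX$. But formation of this image (equivalently, of the sheaf quotient $I_\sX/\ker\alpha$) is only guaranteed to be representable by a closed subgroup, and to commute with base change to points, once the kernel $\ker\alpha = I_{\sX/BT}$ is flat over $\sX$. The fix is to reverse the order: apply generic flatness first to $\ker\alpha \to \sX$, shrink to the resulting dense open, and only then form the quotient; over that open $\St$ is a flat closed subgroup scheme of $T$ with the correct fibers, and your rigidity argument (via SGA3, Exp.~IX) then goes through.

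The paper takes a different route. Rather than constructing a global $\St$, it uses generic flatness of inertia to find a nonempty $T$-invariant open of $X$ which is a gerbe over a finite-type algebraic space $M$, and then checks directly that the gerbe projection $\pi: X \to M$ is $T$-stabilizer-preserving, i.e., $\St^T_X(x) = \St^T_M(\pi(x))$ for every geometric point $x$. This reduces to the algebraic-space case, where Thomason's generic slice (\propref{prop:slice}) gives the result immediately. So your closing remark that Thomason cannot be applied via a smooth atlas is correct, but the paper circumvents this precisely because the gerbe-to-coarse-space map, unlike an atlas, preserves $T$-stabilizers. Your argument has the virtue of being self-contained (no Thomason needed); the paper's is shorter given that \propref{prop:slice} is already on hand.
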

    \begin{proof}
      Note first of all that the second statement follows from the first by noetherian induction.

      For the main statement, we may as well assume that $X$ is reduced (since the statement only involves geometric points).
      For $X$ an algebraic space, the statement follows immediately from \propref{prop:slice}.
      In general we argue as follows.
      By \cite[Tag~06QJ]{Stacks} and generic flatness of the morphism $[I_X/T] \to [X/T]$ (quotient of the projection $I_X \to X$), there exists a nonempty $T$-invariant open $X_0 \sub X$ such that $X_0$ is a gerbe (with respect to a flat finitely presented group algebraic space) over an algebraic space.
      Note also that this algebraic space is locally of finite type over $k$ (since $X$ is locally a trivial gerbe over it), hence of finite type over $k$ since it is quasi-compact (because $X$ is).
      Replacing $X$ by $X_0$ we may therefore assume that $X$ is a gerbe over an algebraic space $M$ of finite type over $k$.

      The $T$-action on $X$ descends along $\pi : X \to M$, for example because it is a coarse moduli space.
      By the algebraic space case, it will suffice to show that $\pi : X \to M$ is $T$-stabilizer-preserving, i.e., for every geometric point $x$ of $X$ the canonical morphism
      \[
        \St^T_X(x) \to \St^T_M(\pi(x))
      \]
      is invertible.
      Since this is a homomorphism of subgroups of $T_{k(x)}$, it is enough to show surjectivity.
      A geometric point of $\St^T_M(\pi(x))$ is a geometric point $t$ of $T$ such that $t \cdot \pi(x) = \pi(x)$.
      But this means precisely that there is an identification $t \cdot x \simeq x$ as geometric points of $X$, since on geometric points $\pi$ exhibits $M$ as the set of connected components of $X$.
    \end{proof}

    \begin{proof}[Proof of \thmref{thm:excitor}]
      Let $\Sigma_0 \sub \Sigma$ be as in \examref{exam:a0pb10pb}; when $X$ is quasi-compact this is finite by \propref{prop:leftish}.
      Thus it will suffice to show that $i_*$ becomes invertible after inverting $c_1(\Sigma_0)$.
      By construction, $\Sigma_0$ satisfies condition~\hyperref[cond:L_G]{($\text{L}_T$)} for $X\setminus Z$, so the claim follows directly from \corref{cor:master equiv nonqc} at least when $X$ has affine stabilizers.

      Otherwise, by the localization triangle we may replace $X$ with $X\setminus Z$ and thereby reduce to showing that $\Chom^{\BM,T}(X)_\loc=0$ when $\Sigma_0$ satisfies condition~\hyperref[cond:L_G]{($\text{L}_T$)} for $X$.
      As in the proof of \propref{prop:leftish}, using the localization triangle again, we may assume that there exists a $T$-equivariant stabilizer-preserving morphism $\pi: X \to M$ which exhibits $X$ as a gerbe over an algebraic space $M$ of finite type over $k$.
      Then $\Sigma_0$ satisfies condition \hyperref[cond:L_G]{($\text{L}_T$)} for $M$ as it does for $X$ and $\pi$ is a surjection on geometric points. 
      Since $M$ is an algebraic space, it follows from \propref{prop:slice} that there exists a nonempty $T$-invariant affine open $U$ of $M$ and an invertible sheaf $\cL \in \Sigma_0$ such that $c_1(\cL)$ is nilpotent as an element of the ring $\pi_0 \Ccoh_T(U)\per$ (see proof of \thmref{thm:conc master}).
      Therefore $V = \pi^{-1}(U)$ is a nonempty $T$-invariant open in $X$ such that $c_1(\cL)$ is nilpotent in $\pi_0 \Ccoh_T(V)\per$.
      In particular 
      \[
        \Chom^{\BM,T}(V)\per[c_1(\cL)^{-1}] = 0
      \]
      as it is a module over $\Ccoh_T(V)\per[c_1(\cL)^{-1}] = 0$.
      The claim now follows by noetherian induction using the localization triangle.
    \end{proof}

  \ssec{Example: fixed loci}
  \label{ssec:flannel}

    In \cite[App.~A]{virloc} we defined a $T$-fixed locus $X^T \sub X$ for an Artin stack $X$.
    In terms of $X^T$, the condition in \thmref{thm:excitor} is that the open complement $X\setminus Z$ is contained in $X \setminus X^T$.
    A priori, the substack $X^T \sub X$ may not be closed in general.
    Nevertheless we obtain the following variants of concentration using the reduced fixed locus (\defnref{defn:redfix}) or homotopy fixed point stack (\defnref{defn:hfix}) when $X$ has finite stabilizers or is Deligne--Mumford, respectively.

    \begin{cor}\label{Prop:Concentration_for_geom_fixed_locus}
      Let the notation be as in \thmref{thm:excitor} and suppose $X$ is of finite type with separated diagonal and finite stabilizers.
      Denote by $X^T_\red$ the reduced $T$-fixed locus (\defnref{defn:redfix}).
      Then direct image along the closed immersion $i : X^T_\red \to X$ (\thmref{thm:fixed}\itemref{item:fixed/redfix}) induces an isomorphism
      \[
        i_*: \Chom^{\BM,T}(X^T_\red)_{\Sigma\dash\loc} \to \Chom^{\BM,T}(X)_{\Sigma\dash\loc}
      \]
      of $\Ccoh(BT)_{\Sigma\dash\loc}$-modules.
    \end{cor}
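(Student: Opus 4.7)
The plan is to deduce this as an immediate specialization of \thmref{thm:excitor} by taking $Z = X^T_\red$. The only work is to verify that the three hypotheses of that theorem hold for this choice of $Z$, namely that $X^T_\red \hookrightarrow X$ is a $T$-invariant closed immersion and that every point $x$ of the open complement has $\St^T_X(x) \subsetneq T_{k(x)}$.

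First I would invoke \thmref{thm:fixed}\itemref{item:fixed/redfix}: since $X$ is a $1$-Artin stack locally of finite type over $k$ (as $X \in \Stk_k$) and has finite stabilizers by assumption, the set $\abs{X^T} \sub \abs{X}$ is closed, and the reduced $T$-fixed locus $X^T_\red \sub X$ exists as a reduced closed substack with $\abs{X^T_\red} = \abs{X^T}$. The $T$-invariance of $X^T_\red$ is automatic: $X^T$ is $T$-stable by its defining functor (the condition that the surjection $\alpha_A$ in \eqref{eq:evvTWDTvdYQvm} be an effective epimorphism is preserved under the $T$-action), hence so is its underlying closed subset, and the associated reduced structure inherits $T$-invariance uniquely.

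Next I would check the stabilizer hypothesis. Let $x$ be a geometric point of $X \setminus X^T_\red$. Then $\abs{x} \notin \abs{X^T_\red} = \abs{X^T}$, so by \defnref{defn:oy1bpy1b1}, the inclusion $\St^T_X(x) \sub T_{k(x)}$ is not an equality; that is, $\St^T_X(x) \subsetneq T_{k(x)}$. This is exactly the condition required to apply \thmref{thm:excitor}, which then directly yields that
\[
  i_* : \Chom^{\BM,T}(X^T_\red)_{\Sigma\dash\loc} \to \Chom^{\BM,T}(X)_{\Sigma\dash\loc}
\]
is an isomorphism of $\Ccoh(BT)_{\Sigma\dash\loc}$-modules.

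There is no real obstacle: the entire statement is a bookkeeping exercise translating the existence of a reduced fixed locus (available here because finite stabilizers force $\abs{X^T}$ to be closed) into the language of \thmref{thm:excitor}. The only mildly subtle point is the $T$-invariance of the reduced structure, which I would either justify by uniqueness of reduced substructures or by noting that $\abs{X^T}$ is $T$-stable as a subset of $\abs{X}$ and that $T$ is reduced.
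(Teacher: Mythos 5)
Your proposal is correct and takes essentially the same route the paper intends: the corollary is a direct specialization of \thmref{thm:excitor} with $Z = X^T_\red$, where \thmref{thm:fixed}\itemref{item:fixed/redfix} provides the closed immersion (using finite stabilizers), \defnref{defn:oy1bpy1b1} shows every point of the complement has $\St^T_X(x) \subsetneq T_{k(x)}$, and $T$-invariance of the reduced structure follows since $\abs{X^T}$ is $T$-stable and $T$ is reduced.
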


    In the Deligne--Mumford case we can alternatively use the homotopy fixed point stack.

    \begin{cor}\label{cor:semireflex}
      Let the notation be as in \thmref{thm:excitor} and suppose $X \in \Stk_k$ is quasi-compact Deligne--Mumford with separated diagonal.
      Choose a reparametrization $T' \twoheadrightarrow T$ as in \thmref{thm:fixed}\itemref{item:fixed/reparam} and denote by $X^{hT'}$ the homotopy fixed point stack with respect to the induced $T'$-action (\defnref{defn:hfix}).
      Then direct image along the closed immersion $i : X^{hT'} \to X$ (\thmref{thm:fixed}\itemref{item:fixed/reparamclosed}) induces an isomorphism
      \[
        i_*: \Chom^{\BM,T}(X^{hT'})_{\Sigma\dash\loc} \to \Chom^{\BM,T}(X)_{\Sigma\dash\loc}
      \]
      of $\Ccoh(BT)_{\Sigma\dash\loc}$-modules.
    \end{cor}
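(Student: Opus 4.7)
The plan is to apply \thmref{thm:excitor}\itemref{item:excitor/toprail} with $Z = X^{hT'}$. Two hypotheses must be checked: that $X^{hT'}$ is a $T$-invariant closed substack of $X$, and that every point of the complement $X \setminus X^{hT'}$ has $T$-stabilizer properly contained in $T_{k(x)}$.

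Closedness of $i : X^{hT'} \hook X$ is the content of \thmref{thm:fixed}\itemref{item:fixed/reparamclosed} applied to the split torus $T'$. For $T$-invariance I would argue as follows. Since $T$ is abelian and the $T'$-action on $X$ is obtained by restriction along the isogeny $\rho : T' \twoheadrightarrow T$ of the $T$-action, the $T$- and $T'$-actions on $X$ commute. Hence for any $A$-valued point $t$ of $T$, translation-by-$t$ is a $T'$-equivariant self-equivalence of $X$, so it carries an $A$-point of $X^{hT'}$ (consisting of an $A$-point $x$ of $X$ together with a group-theoretic section of $\alpha_A : \uAut_{[X/T']}(x) \to T'_A$) to another such point by transporting the section along the induced isomorphism of automorphism group schemes.

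For the stabilizer condition, \thmref{thm:fixed}\itemref{item:fixed/reparam} identifies the set-theoretic image of $X^{hT'} \to X$ with $\abs{X^T} \sub \abs{X}$. Consequently $\abs{X \setminus X^{hT'}} = \abs{X} \setminus \abs{X^T}$, and by \defnref{defn:oy1bpy1b1} a point $x$ outside $\abs{X^T}$ is exactly one satisfying $\St^T_X(x) \subsetneq T_{k(x)}$.

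With these two inputs in hand, \thmref{thm:excitor}\itemref{item:excitor/toprail} applies directly and yields the claimed isomorphism. I do not anticipate a serious obstacle: the whole argument is a packaging of Theorem~\ref{thm:excitor} together with the geometric information about $X^{hT'}$ provided by Theorem~\ref{thm:fixed}. The only mild subtlety is the verification of $T$-invariance of $X^{hT'}$, which nevertheless reduces cleanly to the commutativity of the $T$- and $T'$-actions on $X$.
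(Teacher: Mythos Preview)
Your proposal is correct and follows exactly the approach the paper has in mind: the corollary is stated without proof because it is meant to be an immediate application of \thmref{thm:excitor}\itemref{item:excitor/toprail} once one feeds in the geometric facts about $X^{hT'}$ from \thmref{thm:fixed}\itemref{item:fixed/reparam} and \itemref{item:fixed/reparamclosed}. Your explicit verification of $T$-invariance of $X^{hT'}$ via commutativity of the $T$- and $T'$-actions is a detail the paper leaves to the reader, and your argument for it is sound.
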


  \ssec{Example: vector bundles and abelian cones}

    For future use we record the following example:

    \begin{prop}\label{prop:cone stab}
      Let $X \in \Stk_k$ be $1$-Artin with finite stabilizers, regarded with trivial $T$-action.
      Let $\cE \in \Coh^T(X) \simeq \Coh(X \times BT)$ be a $T$-equivariant coherent sheaf on $X$.
      Write $E = \V_X(\cE)$ for the associated cone over $X$ with $T$-action, and $E\setminus X$ for the complement of the zero section.
      If $\cE$ has no fixed part, i.e., $\cE^\fix \simeq 0$, then for every point $v$ of $E$, we have $\St^T_E(v) = T_{k(v)}$ if and only $v$ belongs to the zero section.
    \end{prop}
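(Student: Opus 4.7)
The plan is to reduce the question to a purely fibrewise statement about the linear $T$-action on a fibre of $\cE$, and then use the connectedness of $T$ together with the finiteness of $\uAut_X(x)$ to extract vanishing from $\cE^\fix\simeq 0$.

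First, since $\cE\in\Coh^T(X)\simeq\Coh(X\times BT)$, the $T$-action on $X$ is trivial; in particular $T$ fixes the image $x = \pi(v)\in X$. A geometric point $v$ of $E$ then corresponds to a pair $(x,\xi)$ with $\xi$ a vector in the fibre $\cE_x$, and the zero section is precisely the locus $\xi=0$. This direction of the equivalence (``if'') is immediate: for $\xi = 0$ we have $t\cdot\xi = \xi$ for every $t$, so the whole torus $T_{k(v)}$ lifts to $\uAut_{[E/T]}(v)$, yielding $\St^T_E(v) = T_{k(v)}$.

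For the converse, I would unpack $\uAut_{[E/T]}(v)$ explicitly using the short exact sequence \eqref{eq:stabses}: it consists of pairs $(t,\alpha)\in T_{k(v)}\times\uAut_X(x)$ satisfying $t\cdot\xi = \alpha\cdot\xi$ in $\cE_x$, and the projection onto the first factor has image $\St^T_E(v)$. Thus
\[
\St^T_E(v) = \{\,t\in T_{k(v)} : t\cdot\xi \in \uAut_X(x)\cdot\xi\,\},
\]
so the hypothesis $\St^T_E(v) = T_{k(v)}$ says that the orbit map $T_{k(v)}\to \cE_x$, $t\mapsto t\cdot\xi$, takes values in the single $\uAut_X(x)$-orbit of $\xi$. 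Because $X$ has finite stabilizers, this orbit is a finite subset of $\cE_x$; because $T_{k(v)}$ is connected (and the orbit map is a morphism of $k(v)$-schemes), its image must be a single point. Hence $t\cdot\xi=\xi$ for all $t\in T_{k(v)}$, i.e., $\xi\in(\cE_x)^T$.

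It then remains to observe that the assumption $\cE^\fix\simeq 0$ forces $(\cE_x)^T=0$. This is where the diagonalizability of $T$ enters: the weight decomposition $\cE = \bigoplus_\chi\cE_\chi$ identifies $\cE^\fix$ with the trivial-weight summand $\cE_0$, and formation of weight spaces commutes with restriction to fibres, so $(\cE_x)^T = (\cE_0)_x = 0$. Therefore $\xi=0$, i.e., $v$ lies in the zero section. I expect the only real subtlety is bookkeeping around the definition of $\St^T_E(v)$ and the identification $(\cE^\fix)_x=(\cE_x)^T$ (which might need a brief justification via the weight decomposition); once these are in place the argument is a one-line use of connectedness of $T$.
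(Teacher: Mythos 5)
Your proof is correct, and it reaches the conclusion by a genuinely different, somewhat more explicit path than the paper. The paper also reduces via the exact sequence \eqref{eq:stabses}, but proceeds by a \emph{dimension count}: it bounds $\dim \uAut_{[E/T]}(v)$ through a second exact sequence
$1 \to \uAut_{[E_x/T]}(v) \to \uAut_{[E/T]}(v) \to \uAut_X(x)$
coming from two applications of \remref{rem:Yana}, uses finiteness of $\uAut_X(x)$ and $\uAut_E(v)$ to drop from $[E/T]$ down to the fibre-quotient $[E_x/T]$, and then invokes the single-fibre case (a $T$-representation with no nonzero fixed vector, handled by the weight decomposition). You instead unwind $\St^T_E(v)$ explicitly to the condition $t\cdot\xi\in\uAut_X(x)\cdot\xi$ and apply connectedness of $T$: the $T$-orbit of $\xi$ lands in the finite set $\uAut_X(x)\cdot\xi$, hence is a single point, so $\xi$ is $T$-fixed, and $\cE^{\fix}\simeq 0$ forces $\xi=0$. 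The net effect is the same — both arguments bottom out at $(\cE_x)^T=(\cE^{\fix})_x=0$ and hinge on finiteness of $\uAut_X(x)$ — but your version makes the role of the finite-stabilizers hypothesis more transparent, while the paper's dimension bookkeeping is slightly more formal. One stylistic point you might tighten: the identity you write for $\St^T_E(v)$ is a priori an identity on \emph{geometric points}, since $\St^T_E(v)$ is defined as an fppf image; but since $T_{k(v)}$ is reduced and the $\uAut_X(x)$-orbit is finite and closed (proper image), passing from the pointwise containment to the scheme-theoretic factorization needed for the connectedness step is immediate. The characterization of $\uAut_{[E/T]}(v)$ as pairs $(t,\alpha)$ with $t\cdot\xi=\alpha\cdot\xi$ is correct because $E\to X$ is representable (affine) with $T$ acting trivially on $X$.
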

    \begin{proof}
      Suppose first that $X$ is the spectrum of a field.
      Since $T$ acts trivially on $X$, the $T$-representation $\cE$ splits as a direct sum of $1$-dimensional representations.
      Since $\cE$ has no fixed part, the latter have nonzero weights, so the claim is clear in this case.
      
      Now consider the general case, i.e., $X$ is $1$-Artin with finite stabilizers.
      Let $v$ be a field-valued point of $E$.
      It is clear that if $v$ belongs to the image of the zero section then it has $\St^T_E(v) = T_{k(v)}$.
      Conversely, suppose that $v \in E \setminus X$.
      To show that the inclusion $\St^T_E(v) \sub T_{k(v)}$ is proper it will suffice to show that $\dim(\St^T_E(v)) < \dim(T_{k(v)})$ (since the scheme $T_{k(v)}$ is irreducible).
      Note that $E$ also has finite stabilizers, since it is affine over $X$.
      By the short exact sequence of group schemes over $k(v)$ \eqref{eq:stabses}
      \[
        1 \to \uAut_E (v) \to \uAut_{\sE}(v) \to \St^T_E(v) \to 1,
      \]
      where $\sE = [E/T]$ is the quotient, it will moreover suffice to show that $\dim(\uAut_{\sE}(v)) < \dim(T_{k(v)})$.

      Let $x$ denote the projection of $v$ in $X$, $E_x$ the fibre of $E$ over $x$, and $\sE_x = [E_x/T]$ the quotient.
      We have another short exact sequence
      \[
        1 \to \uAut_{\sE_x}(v) \to \uAut_{\sE}(v) \to \uAut_X(x),
      \]
      by applying \remref{rem:Yana} twice, to the representable morphism $\sE_x \to \sE$ and to the morphism $\sE \to X \times BT \to X$.
      By the special case of our claim where the base $X$ is $\Spec(k(x))$, we have $\dim(\uAut_{\sE_x}(v)) < \dim(T_{k(v)})$.
      Since $\uAut_{X}(x)$ is finite, it follows that $\dim(\uAut_\sU(v)) < \dim(T_{k(v)})$ as claimed.
    \end{proof}

    By \thmref{thm:excitor} we get:

    \begin{cor}\label{cor:cone LG}
      Let the notation be as in \propref{prop:cone stab}.
      Let $\Sigma \sub \Pic(BT)$ be the subset of nontrivial line bundles.
      Then $\Sigma$ satisfies condition~\hyperref[cond:L_G]{$(\text{L}_{T})$} for $E \setminus X$.
    \end{cor}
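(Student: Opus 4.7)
The plan is to verify condition~\itemref{cond:L_G} for $E\setminus X$ directly, using \propref{prop:cone stab} as the essential input. By that proposition, for every geometric point $v$ of $E\setminus X$ the $T$-stabilizer $\St^T_E(v)$ is a proper closed subgroup of $T_{k(v)}$. Since $T$ is a split torus over $k$, any closed subgroup of $T_{k(v)}$ is diagonalizable and is classified via Cartier duality by a quotient of the (constant) character lattice $X^*(T)$; hence $\St^T_E(v) \simeq H_{k(v)}$ for a unique closed subgroup $H \subsetneq T$ defined over $k$.

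From the proper inclusion $H \subsetneq T$, \remref{rem:roundeleer} furnishes a one-dimensional $T$-representation whose underlying character of $T$ is nontrivial but whose restriction to $H$ is trivial. The associated line bundle $\cL(v) \in \Pic(BT)$ is therefore nontrivial under the identification $\Pic(BT) \simeq X^*(T)$, so it belongs to $\Sigma$, while $\cL(v)|_{BH}$ is trivial by construction. Base-change along $\Spec(k(v)) \to \Spec(k)$ then gives that $\cL(v)|_{B\St^T_E(v)}$ is also trivial, which verifies condition~\itemref{cond:L_G} at the point $v$.

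The proof is essentially cost-free once \propref{prop:cone stab} is in hand; there is no real obstacle. The only mildly delicate point is the descent of closed subgroups of $T_{k(v)}$ to $k$-subgroups of $T$, which is immediate because the character lattice of a split torus is defined (and in fact constant) over the base.
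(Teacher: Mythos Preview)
Your proof is correct and follows essentially the same approach as the paper. The paper's ``proof'' is just the sentence ``By \thmref{thm:excitor} we get:'', which points to the reasoning in that theorem's proof (via \examref{exam:a0pb10pb} and \remref{rem:roundeleer}); you have simply unpacked that argument explicitly, including the descent of $\St^T_E(v)$ to a $k$-subgroup of $T$ via Cartier duality for split tori, which the paper leaves implicit.
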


    \begin{cor}\label{cor:conc bundle}
      Let $S\in\Stk_k$, $X\in \Stk_S$ with finite stabilizers.
      Let $\cE$ be a $T$-equivariant coherent sheaf on $X$ with no fixed part, and write $E = \V_X(\cE)$.
      Then direct image along the zero section $0 : X \to E$ induces an isomorphism of $\Ccoh(BT)_{\Sigma\dash\loc}$-modules
      \begin{equation*}
        0_* : \CBM({X \times BT}_{/S})_{\Sigma\dash\loc}
        \to \CBM([E/T]_{/S})_{\Sigma\dash\loc},
      \end{equation*}
      where $\Sigma$ is as in \corref{cor:cone LG}.
    \end{cor}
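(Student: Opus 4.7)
The plan is to deduce the corollary directly from the concentration master theorem, using the zero section as the closed substack. Since $T$ acts trivially on $X$, the quotient $[X/T]$ is canonically $X \times BT$, and the zero section $0 : X \hookrightarrow E$ is a $T$-equivariant closed immersion with open complement $U := E\setminus X$. The $T$-equivariant localization triangle, in the non-quasi-compact form of \ssecref{ssec:nonqc}, yields an exact triangle
\[
  \CBM({X \times BT}_{/S})_{\Sigma\dash\loc}
  \xrightarrow{0_*} \CBM([E/T]_{/S})_{\Sigma\dash\loc}
  \to \CBM([U/T]_{/S})_{\Sigma\dash\loc},
\]
so it is enough to establish vanishing of the rightmost term.

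For this I would invoke \corref{cor:master equiv nonqc} applied to the $T$-action on $U$. Its hypothesis, that $\Sigma$ satisfies condition~\hyperref[cond:L_G]{$(\text{L}_{T})$} for $U$, is exactly the content of \corref{cor:cone LG}; this is where the no-fixed-part assumption $\cE^\fix \simeq 0$ enters, via \propref{prop:cone stab}. The remaining hypotheses---that $U$ is $1$-Artin with affine stabilizers---are inherited from $X$, since $E \to X$ is representable and affine (so $E$ is $1$-Artin with finite, hence affine, stabilizers), and $U \sub E$ is a $T$-invariant open.

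No real obstacle is expected: all the substantive work has been packaged into \propref{prop:cone stab}, \corref{cor:cone LG}, and \corref{cor:master equiv nonqc}, and the localization triangle for $\CBM(-_{/S})_\loc$ in the non-quasi-compact setting has been verified in \ssecref{ssec:nonqc}, so the reduction goes through with no quasi-compactness hypothesis on $X$ needed.
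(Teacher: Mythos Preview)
Your proposal is correct and follows essentially the same approach as the paper: verify condition~\hyperref[cond:L_G]{$(\text{L}_T)$} for the complement $E\setminus X$ via \corref{cor:cone LG}, then invoke equivariant concentration. The paper's proof simply cites \corref{cor:cone LG} and \corref{cor:equiv conc}, while you unpack the latter via the localization triangle and use the non-quasi-compact variant \corref{cor:master equiv nonqc}; this is a cosmetic difference (and, if anything, makes the statement apply without a quasi-compactness hypothesis on $X$, which the paper's cited \corref{cor:equiv conc} technically requires).
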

    \begin{proof}
      Follows from Corollaries~\ref{cor:cone LG} and \ref{cor:equiv conc}.
    \end{proof}

    We warn the reader that the finite stabilizers assumption in \corref{cor:conc bundle} is necessary, as we can already see in the ``simplest'' example of an Artin stack with infinite stabilizers, i.e. $B\Gm$.
    Indeed, even the conclusion of \propref{prop:cone stab} does not hold in this example:

    \begin{exam}
      Let $Z = B\Gm$, $X = [\A^1/\bG_m]$ where $\Gm$ acts with weight $1$, and $i : Z \to X$ the closed immersion induced by $0 : \Spec(k) \to \A^1$.
      Let $T=\bG_m$ act on $X$ with weight 1, so that $i$ is $T$-equivariant.
      Then the direct image map
      \[
        i_* : \Chom^{\BM,T}(B\Gm)_{T\dash\loc} \to \Chom^{\BM,T}([\A^1/\bG_m])_{T\dash\loc}
      \]
      is \emph{not} an isomorphism, where we write $T\dash\loc$ to emphasize that the localization is with respect to the $T$-action.
      Indeed, using the localization triangle (which passes to equivariant localizations as $(-)_\loc$ is an exact functor), its cofibre is the complex of $T$-equivariant Borel--Moore chains
      \[
        \Chom^{\BM,T}([\bG_m/\bG_m])_{T\dash\loc}
        \simeq \Chom^{\BM,T}(\pt)_{T\dash\loc}
        \simeq \Chom^{\BM}(BT)_{T\dash\loc}
      \]
      where $\pt = \Spec(k)$.
      This is of course nonzero: it is the complex $\Ccoh(BT)_{T\dash\loc} \simeq \Ccoh(\pt)\per[t,t^{-1}]$.
    \end{exam}

\ssec{Example: Higgs sheaves on curves}

    Let $k$ be a field and $C$ a smooth proper and geometrically connected curve over $k$.
    Consider the moduli stack $\uCoh(C)$ of coherent sheaves on $C$.
    This is a smooth $1$-Artin stack locally of finite type over $k$.

    Denote by $\Higgs$ the moduli stack of Higgs sheaves on $C$, i.e., the cotangent bundle of $\uCoh(C)$ (which for us means the total space of the cotangent \emph{complex}).
    This admits a canonical scaling action by the torus $T = \Gm$.
    Let $\bLambda$ denote the closed substack parametrizing \emph{nilpotent} Higgs sheaves, i.e., pairs $(\cF, \theta)$ where $\cF \in \Coh$ and $\theta : \cF \to \cF \otimes K_C$ is nilpotent (with $K_C$ the canonical bundle of $C$).

    \begin{thm}\label{thm:Higgs}
      Let $\Sigma \sub \Pic(BT)$ denote the set of nontrivial invertible sheaves on $BT$.
      Then the direct image map
      \begin{equation*}
        \Chom^{\BM,T}(\bLambda)_\loc \to \Chom^{\BM,T}(\Higgs)_\loc
      \end{equation*}
      is invertible (where the localization is as in \ssecref{ssec:nonqc}).
    \end{thm}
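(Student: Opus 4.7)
The plan is to apply the torus concentration theorem \thmref{thm:excitor} (in the non-quasi-compact framework of \ssecref{ssec:nonqc}) to the scaling action of $T=\Gm$ on $\Higgs$, with $\bLambda \sub \Higgs$ playing the role of the closed substack. The three hypotheses to verify are: that $\Higgs$ is $1$-Artin with affine stabilizers, that $\bLambda$ is a $T$-invariant closed substack, and that for every geometric point $(\cF,\theta)$ of $\Higgs \setminus \bLambda$ the $T$-stabilizer $\St^T_{\Higgs}(\cF,\theta)$ is strictly contained in $T_{k(\cF,\theta)}$.

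The first two hypotheses are routine bookkeeping. The Higgs moduli stack is the total space of the cotangent bundle of the smooth $1$-Artin stack $\uCoh(C)$, hence itself $1$-Artin and locally of finite type; the automorphism group scheme at $(\cF,\theta)$ is cut out of the affine group $\uAut(\cF)$ by the closed condition of commuting with $\theta$, so $\Higgs$ has affine stabilizers. Nilpotency of $\theta$ is a closed condition and is manifestly preserved by the scaling action, so $\bLambda \sub \Higgs$ is indeed a $T$-invariant closed substack.

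The substance of the proof lies in the stabilizer calculation. For this I would use the BNR/spectral picture, under which $\Higgs$ is identified with the moduli stack of coherent sheaves $\widetilde{\cF}$ on the total space $T^\ast C$ of $K_C$ with finite support over $C$; the scaling $T$-action on $\Higgs$ corresponds to the fiberwise scaling $\Gm$-action on $T^\ast C$, and nilpotency corresponds to $\widetilde{\cF}$ being scheme-theoretically supported on the zero section. Equivalently, the classical Hitchin invariants $a_i \in \H^0(C, K_C^{\otimes i})$ (the coefficients of the characteristic/Fitting polynomial of $\theta$) are conjugation-invariant and scale under the $T$-action as $a_i(\lambda\theta) = \lambda^i a_i(\theta)$, and nilpotency is the simultaneous vanishing of all the $a_i$. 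By \defnref{defn:stabilizer}, an element $\lambda \in \St^T_{\Higgs}(\cF,\theta)$ is witnessed by an automorphism $g \in \uAut(\cF)$ with $g\theta g^{-1} = \lambda\theta$, so that $a_i(\theta) = a_i(\lambda\theta) = \lambda^i a_i(\theta)$ for every $i$. Since $\theta$ is not nilpotent some $a_i(\theta) \ne 0$, forcing $\lambda^i = 1$ and hence $\St^T_{\Higgs}(\cF,\theta) \sub \mu_i \subsetneq \Gm$. With all three hypotheses verified, \thmref{thm:excitor} immediately yields the claimed isomorphism.

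The main (mild) obstacle is making the Hitchin-invariant construction uniform over every component of $\uCoh(C)$ (which includes torsion sheaves and sheaves of arbitrary rank and degree) and valid in arbitrary characteristic. The BNR framework handles this cleanly, since the stabilizer condition then reduces to the transparent geometric statement that a closed subscheme of $T^\ast C$ is invariant under fiberwise $\Gm$-scaling only when it is contained in the zero section.
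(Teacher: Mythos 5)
Your overall strategy is the same as the paper's: apply \thmref{thm:excitor} (in the non-quasi-compact form of \ssecref{ssec:nonqc}) to the scaling action and reduce everything to the pointwise claim that the $T$-stabilizer at any non-nilpotent geometric point of $\Higgs$ is a proper subgroup of $T$. Your Hitchin-invariant computation is correct and is essentially the paper's argument in the locally free case. The gap is in your final sentence: the assertion that ``a closed subscheme of $T^\ast C$ is invariant under fiberwise $\Gm$-scaling only when it is contained in the zero section'' is false as stated --- the whole of $T^\ast C$, or any union of fibers, is invariant --- and your Hitchin invariants $a_i \in \H^0(C,K_C^{\otimes i})$ are not defined (and do not detect nilpotency) when $\cF$ has torsion, so the ``mild obstacle'' is a real one. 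What makes the BNR argument close is exactly the extra clause you dropped: the spectral sheaf of a Higgs sheaf has support \emph{finite over $C$}, so if it contained a point $(p,s)$ with $s\ne 0$ and were preserved by all of $T$, the fiber over $p$ would contain the entire $\Gm$-orbit of $s$, contradicting finiteness. Once you insert that clause, the argument is valid uniformly. The paper instead sidesteps the non-uniformity by first reducing to the torsion and locally free cases (if $\cF$ is not nilpotent then one of the maximal torsion Higgs subsheaf $\cF_0$ or the locally free quotient $\cF/\cF_0$ is not nilpotent, and the $T$-stabilizer at $\cF$ is contained in those at $\cF_0$ and $\cF/\cF_0$), then handles the locally free case via the Hitchin fibration and \corref{cor:pull back moduli space}, and the torsion case via the Hilbert--Chow map to $\Sym^d(T^\ast_C)$ and \corref{cor:leisureliness}. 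Either patch --- inserting finiteness over $C$ into your BNR argument, or imitating the paper's two-case reduction --- completes the proof.
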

    \begin{proof}
      Let $\cF$ be a Higgs sheaf over an algebraically closed extension field $\kappa$ of $k$.
      By \thmref{thm:excitor} it is enough to show that, if $\cF$ is not nilpotent, then the $T$-stabilizer at the corresponding geometric point of $\Higgs$ is a proper subgroup of $T_{\kappa}$.
      We are grateful to A.~Minets for providing the following argument.

      First note that if $\cF$ is not nilpotent, then either its maximal torsion Higgs subsheaf $\cF_0$ is nilpotent or the quotient $\cF' = \cF/\cF_0$ is nilpotent.
      Thus we may assume that $\cF$ is either torsion or locally free.

      Recall that the moduli stack of locally free Higgs sheaves (= Higgs bundles) of rank $r$ admits a canonical $T$-equivariant map (the Hitchin fibration) to the scheme $A_r = \bigoplus_{i=1}^{r} \H^0(C, K_C^{\otimes i})$, where $T$ acts on $\H^0(C, K_C^{\otimes i})$ with weight $i$, by sending $\cF$ to the coefficients of the characteristic polynomial of its Higgs field.
      Moreover, a locally free Higgs sheaf $\cF$ is nilpotent if and only if its image in the Hitchin base is trivial.
      By \corref{cor:pull back moduli space}, this implies the claim in the locally free case.

      In the torsion case we argue as follows.
      Under the ``BNR correspondence'' (see \cite{BNRConstruction} or \cite[Lem.~6.8]{SimpsonNHTII}), Higgs torsion sheaves $\cF$ are push-forwards of coherent sheaves on $T^*_C$ of finite length.
      In particular, there is a ``Hilbert--Chow'' map from the moduli of torsion Higgs sheaves to $\Sym^d(T^*_C)$, sending $\cF$ to the support (counted with multiplicities) of the corresponding sheaf on $T^*_C$.
      Now $\cF$ is nilpotent if and only if the corresponding sheaf on $T^*_C$ is supported in the zero section, hence if and only if its image in $\Sym^d(T^*_C)$ is fixed under the $\bG_m$-scaling action.
      We conclude by \corref{cor:leisureliness}.
    \end{proof}

    \begin{rem}
      For the substack of torsion Higgs sheaves, \thmref{thm:Higgs} recovers \cite[Cor.~4.3]{Minets} by taking $\pi_0$.
      In other words, we can regard the result as a generalization of \emph{loc. cit.} to arbitrary rank Higgs sheaves\footnote{%
        A similar claim is made in \cite[Prop.~3.7]{SalaSchiffmann}, but the proof is not correct because the localization does not commute with cofiltered limits.
        This is related to the reason why the discussion in \ssecref{ssec:nonqc} is necessary.
      } and to ``higher'' oriented Borel--Moore homology theories.
    \end{rem}

    \begin{rem}
      \thmref{thm:Higgs} admits an analogous statement for Higgs $G$-bundles, for a connected reductive group $G$, using the same Hitchin fibration argument.
      Similarly, there is a parabolic variant using the parabolic Hitchin fibration \cite{Yokogawa}.
    \end{rem}



\bibliographystyle{halphanum}

\begin{thebibliography}{EHKSY}

  \bibitem[AB]{AtiyahBott} M.\,F.~Atiyah, R.~Bott, \textit{The moment map and equivariant cohomology}.  Topology~{\bfseries{23}} (1984), no.~1, 1--28.

  \bibitem[AKLPR]{virloc} D.~Aranha, A.\,A.~Khan, A.~Latyntsev, H.~Park, C.~Ravi, \textit{Virtual localization revisited}.  \arXiv{2207.01652} (2022).

  \bibitem[Alp1]{AlperGood} J.~Alper, \textit{Good moduli spaces for Artin stacks}.  Ann. Inst. Fourier~{\bf{63}} (2013), no.~6, 2349--2402.

  \bibitem[Alp2]{AlperAdequate} J.~Alper, \textit{Adequate moduli spaces and geometrically reductive group schemes}.  Algebr. Geom.~{\bf{1}} (2014), no.~4, 489--531.

  \bibitem[BNR]{BNRConstruction} A.~Beauville, M.\,S.~Narasimhan, S.~Ramanan, \textit{Spectral curves and the generalized theta divisor}.  J. Reine Angew. Math.~{\bf{398}} (1989), 169--179.

  \bibitem[BP]{BaePark} Y.~Bae, H.~Park, \textit{A comparison theorem for cycle theories for algebraic stacks}. In preparation.

  \bibitem[Bot]{Bott} R.~Bott, \textit{Vector fields and characteristic numbers.}  Michigan Math. J.~{\bf{14}} (1967), 231--244.

  \bibitem[CD]{CisinskiDegliseBook} D.-C.~Cisinski, F.~Déglise, \textit{Triangulated categories of mixed motives}. Springer Monogr. Math.~(2019), 406 p.
  
  \bibitem[Dég]{DegliseOrientation} F.~Déglise, \textit{Orientation theory in arithmetic geometry}.  In: K-Theory—Proceedings of the International Colloquium (Mumbai, 2016), 239--347, Hindustan Book Agency (2018).

  \bibitem[EG1]{EdidinGraham} D.~Edidin, W.~Graham, \textit{Equivariant intersection theory}.  Invent.~Math.~{\bf{131}} (1998), no. 3, 595--644.

  \bibitem[EG2]{EdidinGrahamLocalization} D.~Edidin, W.~Graham, \textit{Localization in equivariant intersection theory and the Bott residue formula}.  Am.~J.~Math.~{\bf{120}} (1998), no.~3, 619--636.

  \bibitem[FSS]{FedorovSoibelmanSoibelman} R.~Fedorov, A.~Soibelman, Y.~Soibelman, \textit{Motivic classes of moduli of Higgs bundles and moduli of bundles with connections}.  Commun. Number Theory Phys.~{\bf{12}} (2018), no.~4, 687--766.

  \bibitem[Ful]{Fulton} W.~Fulton, \textit{Intersection theory}. Springer (1998).

  \bibitem[GP]{GraberPandharipande} T.~Graber, R.~Pandharipande, \textit{Localization of virtual classes}. Invent.~Math.~{\bf{135}} (1999), no.~2, 487--518.

  \bibitem[Gai]{GaitsgoryStacks} D.~Gaitsgory, \textit{Notes on geometric Langlands: stacks}.  Unpublished preprint, available at \url{https://people.math.harvard.edu/~gaitsgde/GL/Stackstext.pdf} (2011).

  \bibitem[Gro]{GrothendieckIntersection} A.~Grothendieck.  \textit{Sur quelques propriétés fondamentales en théorie des intersections}.  Séminaire Claude Chevalley~{\bf{3}} (1958), talk no.~4, 36 p.

  \bibitem[HML]{HellerMalagonLopez} J.~Heller, J.~Malag\'on-L\'opez, \textit{Equivariant algebraic cobordism}.  J.~Reine~Angew.~Math.~{\bf{684}}, 87--112 (2013).
  
  \bibitem[HR]{HallRydhGroups} J.~Hall, D.~Rydh, \textit{Algebraic groups and compact generation of their derived categories of representations}.  Indiana~Univ.~Math.~J.~{\bf{64}}, no. 6, 1903--1923 (2015).

  \bibitem[Jos]{JoshuaRRI} R.~Joshua, \textit{Riemann--Roch for algebraic stacks. I}. Compositio~Math.~{\bf{136}} (2003), no. 2, 117--169.

  \bibitem[KK]{dimredcoha} A.\,A.~Khan, T.~Kinjo, \textit{3d cohomological Hall algebras for local surfaces}.  Available at: \url{https://www.preschema.com/papers/dimredcoha.pdf} (2023).

  \bibitem[KM]{KeelMori} S.~Keel, S.~Mori, \textit{Quotients by groupoids}.  Ann. of Math.~{\bf{145}} (1997), no.~1, 193--213.

  \bibitem[KP]{KiemPark} Y.-H. Kiem, H.~Park, \textit{Virtual intersection theories}.  Adv.~Math.~{\bf{388}} (2021), Article ID~107858.

  \bibitem[KPR]{Kloc} A.~A.~Khan, H.~Park, C.~Ravi, \textit{Concentration and localization formulas in K-theory}.  In preparation.

  \bibitem[KR1]{Equilisse} A.\,A.~Khan, C.~Ravi, \textit{Equivariant generalized cohomology via stacks}.  \arXiv{2209.07801} (2022).

  \bibitem[KR2]{Concat} A.\,A.~Khan, C.~Ravi, \textit{Cohomological and categorical concentration theorems}.  \arXiv{2308.01652} (2023).

  \bibitem[Kha1]{KhanVirtual} A.\,A.~Khan, \textit{Virtual fundamental classes for derived stacks I}. \arXiv{1909.01332} (2019).

  \bibitem[Kha2]{KhanSix} A.\,A.~Khan, \textit{Voevodsky's criterion for constructible categories of coefficients}.  Available at: \url{https://www.preschema.com/papers/six.pdf} (2021).

  \bibitem[Kha3]{KhanKstack} A.\,A.~Khan, \textit{K-theory and G-theory of derived algebraic stacks}.  Jpn. J. Math.~{\bf{17}} (2022), 1--61.
  
  \bibitem[Kha4]{Weaves} A.\,A.~Khan, \textit{Weaves}. Available at: \url{https://www.preschema.com/papers/weaves.pdf} (2023).

  \bibitem[Kha5]{KhanNCTS} A.\,A.~Khan, \textit{Lectures on algebraic stacks}.  \arXiv{2310.12456} (2023).

  \bibitem[Kha6]{KhanWeavelisse} A.\,A.~Khan, \textit{Lisse extensions of weaves}.  \arXiv{2501.04114} (2025).

  \bibitem[Kre]{Kresch} A.~Kresch, \textit{Cycle groups for Artin stacks}.  Invent.~Math.~{\bf{138}} (1999), no.~3, 495--536.

  \bibitem[Kri]{KrishnaTorusLocalization} A.~Krishna, \textit{Equivariant Cobordism for Torus Actions}.  Adv. Math.~{\bf{231}} (2012), no.~{\bf{5}}, 2858--2891.

  \bibitem[LZ]{LiuZheng} Y.~Liu, W.~Zheng, \textit{Enhanced six operations and base change theorem for higher Artin stacks}. \arXiv{1211.5948} (2012).

  \bibitem[Lur]{LurieHA} J.~Lurie, \textit{Higher algebra}, version of 2017-09-18. \url{https://www.math.ias.edu/~lurie/papers/HA.pdf}.

  \bibitem[Min]{Minets} A.~Minets, \textit{Cohomological Hall algebras for Higgs torsion sheaves, moduli of triples and sheaves on surfaces}.  Sel. Math., New Ser.~{\bf{26}} (2020), no. 2, paper no.~30, 67 p.

  \bibitem[Rom]{RomagnyGroupActions} M.~Romagny, \textit{Group Actions on Stacks and Applications}.  Michigan Math. J.~{\bf{53}} (2005).

  \bibitem[SGA3]{SGA3} M.~Demazure, A.~Grothendieck (eds.), \textit{Schémas en groupes} (SGA 3). Séminaire de Géometrie Algébrique du {B}ois-{M}arie 1962--1964. Lecture Notes in Mathematics~{\bf{151}}, {\bf{152}}, {\bf{153}}, Springer (1970).
  
  \bibitem[SGA6]{SGA6} P.~Berthelot, A.~Grothendieck, L.~Illusie, \textit{Théorie des intersections and théorème de Riemann--Roch}. Séminaire de Géometrie Algébrique du {B}ois-{M}arie 1966–1967 (SGA 6).  Lecture Notes in Mathematics~{\bf{225}}, Springer (1971).

  \bibitem[SP]{Stacks} The Stacks Project. \url{https://stacks.math.columbia.edu}.

  \bibitem[SS]{SalaSchiffmann} F.~Sala, O.~Schiffmann, \textit{Cohomological Hall algebra of Higgs sheaves on a curve}.  Algebr. Geom.~{\bf{7}} (2020), no. 3, 346--376.

  \bibitem[Sim]{SimpsonNHTII} C.~Simpson, \textit{Moduli of representations of the fundamental group of a smooth projective variety II}.  Publ. Math. IH\'ES~{\bf{80}} (1994), 5--79.
  
  \bibitem[Tho1]{ThomasonComp} R.\,W.~Thomason, \textit{Comparison of equivariant algebraic and topological K-theory}.  Duke~Math.~J.~{\bf{53}} (1986), no.~3, 795--825.

  \bibitem[Tho2]{ThomasonLefschetz} R.\,W.~Thomason, \textit{Une formule de Lefschetz en K-théorie équivariante algébrique}.  Duke~Math.~J.~{\bf{68}} (1992), no.~3, 447--462.

  \bibitem[To\"e]{ToenSimplicial} B.~To\"en, \textit{Simplicial presheaves and derived algebraic geometry}, in: Simplicial methods for operads and algebraic geometry, 119--186, Adv.~Courses~Math.~CRM~Barcelona (2010).

  \bibitem[Voe]{VoevodskyICM} V.~Voevodsky, \textit{$\mathbf{A}^1$-homotopy theory}. Proceedings of the International Congress of Mathematicians, Vol. I (Berlin, 1998). Doc.~Math.~{\bf{1998}}, Extra~Vol.~I, 579--604.

  \bibitem[Yok]{Yokogawa} K.~Yokogawa, \textit{Compactification of moduli of parabolic sheaves and moduli of parabolic Higgs sheaves}.  J. Math. Kyoto Univ.~{\bf{33}} (1993), 451--504.

\end{thebibliography}

Fakultät Mathematik, Universität Duisburg-Essen, Essen, 45127, Germany

Institute of Mathematics, Academia Sinica, Taipei, 10617, Taiwan

Mathematical Institute, University of Oxford, Oxford, OX2 6GG, United Kingdom

Department of Mathematical Sciences, Seoul National University, Seoul, 08826, Korea

School of Mathematics, Tata Institute of Fundamental Research, Mumbai, 400005
India

\end{document}